\newtheorem{Thm}{Theorem}[section]
\newtheorem{Cor}[Thm]{Corollary}
\newtheorem{Conj}{Conjecture}
\newtheorem{Prop}[Thm]{Proposition}
\newtheorem{Lem}[Thm]{Lemma}
\newtheorem*{thma}{Theorem A}
\newtheorem*{thmb}{Theorem B}
\newtheorem*{thmc}{Theorem C}
\newtheorem*{thmd}{Theorem D}
\newtheorem*{thme}{Theorem E}
\theoremstyle{definition}
\newtheorem{Def}[Thm]{Definition}
\theoremstyle{remark}
\numberwithin{equation}{section}
\newcommand{\Aut}{\operatorname{Aut}}
\newcommand{\Mor}{\operatorname{Mor}}
\newcommand{\stab}{\operatorname{stab}}
\renewcommand{\dim}{\operatorname{dim}}
\newcommand{\De}{\mathcal{D}}
\newcommand{\Sym}{\operatorname{Sym}}
\newcommand{\supp}{\operatorname{supp}}
\newcommand{\Alt}{\operatorname{Alt}}
\newcommand{\Sp}{\operatorname{Sp}}
\newcommand{\PSp}{\operatorname{PSp}}
\renewcommand{\Gamma}{\varGamma}
\renewcommand{\epsilon}{\varepsilon}
\renewcommand{\leq}{\leqslant}
\renewcommand{\geq}{\geqslant}
\newcommand{\I}{\mathcal{I} }
\renewcommand{\S}{\mathcal{S} }
\newcommand{\B}{\mathcal{B} }
\newcommand{\ep}{\epsilon}
\newcommand{\X}{\mathcal{X} }
\newcommand{\Y}{\mathcal{Y} }
\renewcommand{\B}{\mathcal{B}}
\renewcommand{\L}{\mathcal{L}}
\newcommand{\T}{\mathcal{T}}
\newcommand{\U}{\mathcal{U}}
\renewcommand{\L}{\mathcal{L}}
\newcommand{\C}{\mathcal{C}}
\renewcommand{\O}{\mathcal{O}}
\begin{document}


\title{Conway groupoids and Completely Transitive Codes}
 

\author{Nick Gill}\thanks{Part of the work for this paper was completed while Nick Gill was a visiting professor at the Universidad de Costa Rica. He would like to thank the mathematics department there for their warm hospitality. In addition, all three authors would like to thank Professor Noam Elkies for answering our questions about $M_{13}$.}
\address{Department of Mathematics,
University of South Wales,
Treforest, CF37 1DL
}
\email{nickgill@cantab.net}
\author{Neil I. Gillespie}
\address{Heilbronn Institute for Mathematical Research, Department of Mathematics, University of Bristol, U.K.}
\email{neil.gillespie@bristol.ac.uk}
\author{Jason Semeraro}
\address{Heilbronn Institute for Mathematical Research, Department of Mathematics, University of Bristol, U.K.}
\email{js13525@bristol.ac.uk}



\begin{abstract}
To each supersimple $2-(n,4,\lambda)$ design $\De$ one associates a `Conway groupoid,' which may be thought of as a natural generalisation of Conway's Mathieu groupoid associated to $M_{13}$ which is constructed from $\mathbb{P}_3$.

We show that $\Sp_{2m}(2)$ and $2^{2m}.\Sp_{2m}(2)$ naturally occur as Conway groupoids associated to certain designs. It is shown that the incidence matrix associated to one of these designs generates a new family of completely transitive $\mathbb{F}_2$-linear codes with minimum distance 4 and covering radius 3, whereas the incidence matrix of the other design gives an alternative construction to a 
previously known family of completely transitive codes.

We also give a new characterization of $M_{13}$ and prove that, for a fixed $\lambda > 0,$ there are finitely many Conway groupoids for which the set of morphisms does not contain all elements of the full alternating or symmetric group.
\end{abstract}

\keywords{primitive groups, symmetric generation, completely regular codes, completely transitive codes, symplectic groups, Conway groupoids, Mathieu groupoid}

\subjclass[2010]{20B15, 20B25, 05B05}

\maketitle

\section{Introduction}\label{s:intro}
In recent work with A.~Nixon \cite{Puzz}, we introduced the notion of a \textit{Conway groupoid}. To construct such an object we start with a supersimple $2-(n,4,\lambda)$ design $\De$ i.e. a design for which any two lines intersect in at most two points. The Conway groupoid $\C(\De)$ of $\De$ is a small category whose set of objects is the set of points in $\De$, and whose morphisms can be `read off' from the lines in $\De$; in particular, this process associates an element of the group $\Sym(n)$ to each morphism in $\C(\De)$ (see Section \ref{s:back} for full details).

The concept of a Conway groupoid is a direct generalization of the groupoid associated to Conway's famous construction of $M_{13}$ using a `game' played on $\mathbb{P}_3$, the finite projective plane of order $3$ \cite{Co1}. Thus, by viewing $\mathbb{P}_3$ as a supersimple $2-(13,4,1)$ design, the set $M_{13}$ inside $\Sym(13)$ determines a Conway groupoid. A number of other examples were constructed in \cite{Puzz}. In this paper we are interested in constructing more examples of Conway groupoids and in working towards a full classification. 

{\bf Constructing examples}: We say that a Conway groupoid $\C$ associated to a supersimple $2-(n,4,\lambda)$ design is \textit{full} if every element of $\Alt(n)$ occurs as a morphism. One of the main results in \cite{Puzz} suggests that those designs whose Conway groupoids are not full are rare \cite[Theorem C]{Puzz}. In this paper, we demonstrate the existence of two infinite families of designs with this property. These families arise from:
\begin{itemize}
\item[(a)] the two 2-transitive actions of $\Sp_{2m}(2)$ on sets of quadratic forms over a $2m$-dimensional $\mathbb{F}_2$-vector space, for $m \geq 3$;
\item[(b)] the natural action of the affine group $2^{2m}.\Sp_{2m}(2)$ on $(\mathbb{F}_2)^{2m}$.
\end{itemize}
Both actions give rise to codes associated to the incidence matrices of the corresponding designs. In case (b), these codes were already known (see \cite{BRZ}). However, in case (a) the codes which arise are new; they are completely transitive and have covering radius $3$. 

{\bf Classifying Conway groupoids}: We prove two main results - Theorems D and E below - that give classifications of Conway groupoids subject to certain extra suppositions. Both results have interesting implications: Theorem D gives a new characterization of the Conway groupoid determined by $M_{13}$; Theorem E yields a proof of \cite[Conjecture 8.1]{Puzz}, which asserts that for each $\lambda > 0$ there exist only finitely many supersimple $2-(n,4,\lambda)$ designs whose Conway groupoids are not full.

\subsection{The main theorems}

In this section we briefly outline the main results of the paper. The definitions of all terms used in the statement of these results can be found in Sections~\ref{s:back} and \ref{s:action}.

In order to construct new infinite families of Conway groupoids we study the action of the group $\Sp_{2m}(2)$ on the set $\Omega$ of quadratic forms $(\mathbb{F}_2)^{2m}\to\mathbb{F}_2$. We make use of a bijection between $\Omega$ and the vector space $V\cong\mathbb{F}_2^{2m}$ on which $\Sp_{2m}(2)$ naturally acts,
allowing us to denote quadratic forms by $\theta_a$ for some $a\in V$. (This bijection is fully explained in Section~\ref{s:action}.) 

For $\ep \in \{0,1\}$ write $V^\ep:=\{v \in V \mid \theta_0(v)=\ep\}$ (here $\theta_0$ is the quadratic form associated with the zero vector). Then the induced action of $\Sp_{2m}(2)$ on $\Omega$ splits into two orbits $\Omega^0$ and $\Omega^1$ where $\Omega^\ep:=\{\theta_a \mid a \in V^\ep\}$. Our first result asserts the existence of some supersimple designs with these orbits.

\begin{thma}
Let $m \geq 3$ and $\epsilon \in \mathbb{F}_2$. Then  $$\B^\ep:=\{\{\theta_{v_1},\theta_{v_2},\theta_{v_3},\theta_{v_1+v_2+v_3} \} \mid v_i \in V^\ep, \sum_{i=1}^3 v_i \in V^\ep\}$$ forms the line set for a supersimple $2-(f_\epsilon(m),4,f_\epsilon(m-1)-1)$ design $(\Omega^\ep,\B^\ep)$ where  
\begin{equation}\label{e:fep}
f_\epsilon(m):=|\Omega^\epsilon|=2^{m-1} \cdot (2^m + (-1)^\epsilon).
\end{equation}
Furthermore, letting $\theta_0$ be the quadratic form associated to the zero vector, $$\B^a:=\{\{v_1,v_2,v_3,v_1+v_2+v_3\} \mid v_i \in V, \sum_{i=1}^3 \theta_0(v_i)=\theta_0\left(\sum_{i=1}^3 v_i\right)\}$$ forms the line set for a supersimple $2-(2^{2m},4,2^{2(m-1)}-1)$ design $(V,\B^a)$. 
\end{thma}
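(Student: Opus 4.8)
Write $B$ for the (non-degenerate, alternating) polar form of $\theta_0$; as set up in Section~\ref{s:action}, every form in $\Omega$ equals $\theta_a\colon v\mapsto\theta_0(v)+B(a,v)$ for a unique $a\in V$. The plan is to reinterpret both line sets as families of affine $2$-planes of the symplectic space $(V,B)$ via the polarisation identity $\theta_0(x+y)=\theta_0(x)+\theta_0(y)+B(x,y)$. For $\B^a$: if $v_1,v_2,v_3$ are pairwise distinct and $v_4:=v_1+v_2+v_3$, then $\{v_1,v_2,v_3,v_4\}=v_1+W$ with $W:=\langle v_1+v_2,\,v_1+v_3\rangle$ of dimension $2$, and a short computation gives $\sum_{i=1}^{3}\theta_0(v_i)+\theta_0(v_4)=B(v_1+v_2,\,v_1+v_3)$; hence the defining congruence holds exactly when $W$ is totally isotropic, so $\B^a$ is precisely the set of cosets of totally isotropic $2$-subspaces of $V$. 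For $\B^\epsilon$: under $\theta_a\leftrightarrow a$ a block becomes the $4$-set $\{v_1,v_2,v_3,v_1+v_2+v_3\}$, again an affine $2$-plane, and the conditions $v_i,\sum v_i\in V^\epsilon$ say exactly that this plane lies in $V^\epsilon=\theta_0^{-1}(\epsilon)$; so $\B^\epsilon$ consists of all affine $2$-planes of $V$ contained in $V^\epsilon$. In both cases every block is a genuine $4$-subset, and supersimplicity is automatic: the intersection of two cosets is empty or a coset of the intersection of their linear parts, so if two distinct blocks shared $\ge 3$ points their linear parts would coincide and they would be disjoint cosets of the same subspace --- a contradiction.

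It remains to compute $\lambda$. For $(V,\B^a)$, fix distinct $x,y$ and put $z:=x+y\ne 0$; a block through $x$ and $y$ has the form $x+W$ with $z\in W$ and $W$ totally isotropic. Since $z$ is isotropic, these $W$ are exactly the $\langle z,u\rangle$ with $u\in z^\perp\setminus\langle z\rangle$, two choices of $u$ giving the same $W$ precisely when they differ by $z$; hence there are $\tfrac12(|z^\perp|-2)=2^{2m-2}-1$ blocks through $\{x,y\}$, independently of the pair. This gives the $2-(2^{2m},4,2^{2(m-1)}-1)$ design.

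For $(\Omega^\epsilon,\B^\epsilon)$ --- the crux --- I first record from Section~\ref{s:action} (equivalently, from standard orthogonal geometry over $\mathbb{F}_2$) that $|\Omega^\epsilon|=f_\epsilon(m)$ as in \eqref{e:fep}, and that for $a\in V^\epsilon$ the form $\theta_a$ is non-degenerate of ``type $\epsilon$'', meaning it has $f_\epsilon(m)$ zeros. Fix distinct $a,b\in V^\epsilon$ and set $z:=a+b\ne 0$. From $\theta_0(a)=\theta_0(b)=\epsilon$ the polarisation identity yields $\theta_0(z)=B(a,z)$, i.e. $\theta_a(z)=0$, so $z$ is a singular vector of $\theta_a$. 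A direct check shows that for $u\notin\langle z\rangle$ the plane $a+\langle z,u\rangle$ lies in $V^\epsilon$ if and only if $u\in z^\perp$ and $\theta_a(u)=0$; counting such $u$ up to translation by $z$ and discarding the two elements of $\langle z\rangle$, the number of blocks through $\{\theta_a,\theta_b\}$ equals $\tfrac12\bigl(|\{u\in z^\perp:\theta_a(u)=0\}|-2\bigr)$. Since $\theta_a$ vanishes on the radical $\langle z\rangle$ of $B|_{z^\perp}$, it descends to a non-degenerate quadratic form $\overline{\theta_a}$ on the $(2m-2)$-dimensional symplectic space $z^\perp/\langle z\rangle$; splitting off a hyperbolic plane through $z$ and using that type is multiplicative under orthogonal direct sum shows $\overline{\theta_a}$ is again of type $\epsilon$, so $|\{u\in z^\perp:\theta_a(u)=0\}|=2f_\epsilon(m-1)$. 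Hence every pair of points lies in exactly $f_\epsilon(m-1)-1$ blocks, which is positive for $m\ge 3$, giving the $2-(f_\epsilon(m),4,f_\epsilon(m-1)-1)$ design.

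The polarisation identities are routine bookkeeping; the point needing genuine care is the last one --- that $\lambda=f_\epsilon(m-1)-1$ does not secretly depend on the chosen pair --- which hinges on (i) identifying $\theta_a$ for $a\in V^\epsilon$ as a form of type $\epsilon$, and (ii) the standard fact that passing from a type-$\epsilon$ quadratic space to the quotient by a nonzero singular vector preserves the type. I would also verify the base case $m=3$ directly to be sure no degeneracy intervenes.
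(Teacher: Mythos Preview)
Your proof is correct and genuinely different from the paper's. The paper argues group-theoretically: it shows (via Theorem~\ref{t:3sets} on $3$-subset orbits, Lemma~\ref{l:evensum}, and a GAP check at $m=3$) that $\B^\ep$ is a single $\Sp_{2m}(2)$-orbit on $4$-subsets, whence $2$-transitivity of $\Sp_{2m}(2)$ on $\Omega^\ep$ forces $\lambda$ to be constant; it then computes $\lambda$ for one convenient pair using an inductive decomposition of $V^\ep$ (Lemmas~\ref{l:indcons}--\ref{l:indcons2}). The affine case is handled the same way with $2^{2m}.\Sp_{2m}(2)$. You instead bypass the group entirely: identifying $\B^a$ with cosets of totally isotropic $2$-spaces and $\B^\ep$ with affine $2$-planes lying in $V^\ep$, you compute $\lambda$ directly and uniformly for every pair via the standard fact that quotienting a type-$\ep$ quadratic space by a singular line preserves the type. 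Your route is more self-contained and avoids both the orbit analysis and the $m=3$ computer check (your hedge about verifying $m=3$ separately is unnecessary --- nothing in your argument degenerates there). The paper's route, on the other hand, establishes the orbit structure on $3$- and $4$-subsets as a byproduct, which it reuses later (e.g.\ in proving complete transitivity of the codes in Section~\ref{s:codes}).
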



Let us write $2^{2m}$ for an elementary abelian group of order $2^{2m}$. Then, as we shall see (Lemmas~\ref{l:spdes} and \ref{l:aff}), the sets $\B^\ep$ (resp. $B^a$) are, in fact, orbits of the group $\Sp_{2m}(2)$ (resp. $2^{2m}.\Sp_{2m}(2)$) acting on the set of 4-subsets of $\Omega$ (resp. $V$). It turns out that this is not the first time that the action of $\Sp_{2m}(2)$ on the set of associated quadratic forms has been used to construct designs with special properties \cite{wertheimer}.

Write $\De^\ep:=(\Omega^\ep,\B^\ep)$ and $\De^a:=(V,\B^a)$ for the designs constructed in Theorem A. Our next result, Theorem B, proves the existence of an infinite class of Conway groupoids; these are the Conway groupoids associated to $\De^a$ and $\De^\ep$.

To understand the statement of the theorem we recall that, given a point $\infty$ in a design $\De$, we write $\L_\infty(\De)$ for the set of all move sequences in $\De$ which 
start at $\infty$, while we write $\pi_\infty(\De)$ for the set of all move sequences which start and end at $\infty$. Writing $n$ for the number of points in $\De$, we observe that 
$\L_\infty(\De)$ is a subset of $\Sym(n)$, while $\pi_\infty(\De)$ is a subgroup of $\Sym(n-1)$ which we call the {\it hole stabilizer}. In Section \ref{sub:conway}, we describe 
how the Conway groupoid $\mathcal{C}(\De)$ is completely determined by $\L_\infty(\De)$, which explains the focus of the following theorem (and indeed the focus of
Theorems D and E).

\begin{thmb}
Let $m \geq 3$ and let $\De^a$ and $\De^\ep$ be as above. The following hold:

\begin{itemize}
\item[(a)] Let $\infty$ be a point in $\De^\ep$. Then $\L_\infty(\De^\ep)$ coincides with a subgroup of $\Sym(\Omega^\ep)$ isomorphic to $\Sp_{2m}(2)$ and $\pi_\infty(\De^\ep)$ coincides with the stabilizer of $\infty$ inside $\L_\infty(\De^\ep)$;
\item[(b)]  Let $\infty$ be a point in $\De^a$. Then $\L_\infty(\De^a)$ coincides with a subgroup of $\Sym(V)$ isomorphic to  $2^{2m}.\Sp_{2m}(2)$ and $\pi_\infty(\De^a)$ coincides with the stabilizer of $\infty$ inside $\L_\infty(\De^a)$.
\end{itemize}
\end{thmb}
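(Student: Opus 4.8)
The plan is to prove both parts in parallel; I describe (a) in detail and indicate the modifications for (b). By Theorem~A the design $\De^\ep$ is a $2$-design, so any two of its points lie on a common line and $\De^\ep$ is connected, and I will use the general structure theory of connected supersimple $2$-$(n,4,\lambda)$ designs from \cite{Puzz}: the hole stabilizer $\pi_\infty(\De^\ep)$ is a subgroup of $\Sym(\Omega^\ep\setminus\{\infty\})$, and $\L_\infty(\De^\ep)$ is the disjoint union $\bigsqcup_{p\in\Omega^\ep}\pi_\infty(\De^\ep)\,w_p$, where $w_p$ is any move sequence from $\infty$ to $p$ (the sets in this union being distinguished by the image of $\infty$); in particular $|\L_\infty(\De^\ep)|=|\Omega^\ep|\cdot|\pi_\infty(\De^\ep)|$. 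Hence, to prove (a) it is enough to produce a subgroup $G\leq\Sym(\Omega^\ep)$ with $G\cong\Sp_{2m}(2)$ for which: (i) every elementary move of $\De^\ep$ lies in $G$ --- this forces $\L_\infty(\De^\ep)\leq G$, and since $\pi_\infty(\De^\ep)$ fixes $\infty$ it also forces $\pi_\infty(\De^\ep)\leq G_\infty$; and (ii) $\pi_\infty(\De^\ep)=G_\infty$. Indeed, (i) and (ii) give $|\L_\infty(\De^\ep)|=|\Omega^\ep|\cdot|G_\infty|=|G|$, so $\L_\infty(\De^\ep)=G$ as subsets of $\Sym(\Omega^\ep)$; in particular $\L_\infty(\De^\ep)$ is the claimed subgroup, and $\pi_\infty(\De^\ep)=G_\infty=\Stab_G(\infty)$.

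I take $G$ to be $\Sp_{2m}(2)$ in its $2$-transitive action on $\Omega^\ep$ described in Section~\ref{s:action}; then $G_\infty$ is the group of isometries of the quadratic form $\infty\in\Omega^\ep$, isomorphic to $\Oo^{\pm}_{2m}(2)$, of order $|\Sp_{2m}(2)|/f_\ep(m)$, and it acts transitively on $\Omega^\ep\setminus\{\infty\}$. Step~(i) is a direct computation: using the description of $\B^\ep$ in Theorem~A together with the formula for the action of $\Sp_{2m}(2)$ on $\Omega^\ep$ from Section~\ref{s:action}, one writes down the permutation of $\Omega^\ep$ attached to an elementary move along a line $\{\theta_{v_1},\theta_{v_2},\theta_{v_3},\theta_{v_1+v_2+v_3}\}$ and identifies it with the action of an explicit element of $\Sp_{2m}(2)$ --- I expect this element to be a product of symplectic transvections determined by the vectors $v_i$, since the recipe defining an elementary move reproduces, on the line, the same $\mathbb{F}_2$-linear bookkeeping that underlies Conway's construction of $M_{13}$. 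Granting this, the subgroup of $\Sym(\Omega^\ep)$ generated by all elementary moves lies in $\Sp_{2m}(2)$, and hence so does $\L_\infty(\De^\ep)$.

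The substantial part is step~(ii): showing that the hole stabilizer is the \emph{whole} orthogonal group $\Oo^{\pm}_{2m}(2)$, not a proper subgroup. The inclusion $\pi_\infty(\De^\ep)\leq\Oo^{\pm}_{2m}(2)$ is immediate from step~(i); for the reverse inclusion my plan is to realise a generating set of $\Oo^{\pm}_{2m}(2)$ by closed move sequences based at $\infty$. As $\Oo^{\pm}_{2m}(2)$ is generated by its orthogonal reflections $\rho_w\colon x\mapsto x+B(x,w)w$ (where $B$ is the symplectic form on $V$ and $w$ runs over the vectors with $\infty(w)=1$), it suffices to express each such $\rho_w$ as a short closed walk in $\De^\ep$: leave $\infty$ along a well-chosen line, return along a second line through the same pair of points --- there are $\lambda=f_\ep(m-1)-1\geq 2$ such lines, and this surplus is exactly what makes the required walks available --- and, if necessary, compose a bounded number of such out-and-back walks, then check that the net permutation is $\rho_w$. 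Performing this intrinsically in the quadratic space, rather than by cases, should yield a proof uniform in $m$; an alternative is to pin down $|\pi_\infty(\De^\ep)|$ via the relationship between a design, its incidence code and its hole stabilizer developed in \cite{Puzz}. Once $\pi_\infty(\De^\ep)=\Oo^{\pm}_{2m}(2)$ is established, the counting argument of the first paragraph finishes (a).

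For (b) the same three steps are carried out with $G=2^{2m}.\Sp_{2m}(2)$ in its natural action on $V$, so that $G_\infty=\Stab_G(\infty)\cong\Sp_{2m}(2)$: the translations $x\mapsto x+w$, which constitute the normal subgroup $2^{2m}$, arise from elementary moves because such a move acts on its own line precisely as a translation of $V$; the $\Sp_{2m}(2)$-part is obtained as in step~(i); and $\pi_\infty(\De^a)$ is shown to be all of $\Sp_{2m}(2)$ by realising a generating set as closed move sequences, after which counting (with $|\L_\infty(\De^a)|=2^{2m}\cdot|\Sp_{2m}(2)|$) finishes (b). Alternatively, one can use that $v\mapsto\theta_v$ identifies $\De^\ep$ with the subdesign of $\De^a$ induced on $V^\ep$ and try to deduce (b) from (a) together with the structure of the extension $2^{2m}.\Sp_{2m}(2)$; in any case the translation subgroup must be produced from the moves directly. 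I expect step~(ii) to be the main obstacle throughout: the containment $\L_\infty\leq G$ is a finite computation, whereas showing that the hole stabilizer is the \emph{full} orthogonal (respectively symplectic) group requires a careful, $m$-uniform choice of closed walks in the design and the recognition of their composite as a prescribed isometry.
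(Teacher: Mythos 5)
Your structural framework is sound and matches the paper's bookkeeping: the coset decomposition $|\L_\infty(\De)|=n\cdot|\pi_\infty(\De)|$ is exactly \cite[Lemma 3.1]{Puzz}, and your step (i) (each elementary move of $\De^\ep$ is induced by a symplectic transvection $t_{x_0+y_0}$, and each elementary move of $\De^a$ by an explicit element of $2^{2m}.\Sp_{2m}(2)$) is a genuine direct computation, carried out in the paper as Lemma \ref{l:transperm}. The genuine gap is your step (ii), which you yourself flag as ``the main obstacle'': you never supply a mechanism for evaluating the net permutation of a closed move sequence $[\infty,a,b,\infty]$. Saying ``compose a bounded number of out-and-back walks and check that the net permutation is $\rho_w$'' is precisely the missing content. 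The key identity that makes such checks possible is $[\infty,a]^{[a,b]}=[\infty,b]$, hence $[\infty,a,b,\infty]=[a,b]$, whenever $\infty\notin\overline{a,b}$ (Lemma \ref{l:transconj} and Corollary \ref{c:transconj} in the paper, proved by a calculation with transvections, resp.\ with pairs $(v,g)$ in the affine group); without it, your plan to realise every orthogonal reflection of $\Oo^{\pm}_{2m}(2)$ as a closed walk, and your appeal to reflection-generation of $\Oo^{\pm}_{2m}(2)$, cannot get off the ground. In part (b) there is a further concrete slip: an elementary move of $\De^a$ is \emph{never} a pure translation (it is the affine map with linear part $t_{x_0+y_0}$ and translation part $(1+\varphi(x_0,y_0))(x_0+y_0)$), so translations do not ``arise from elementary moves'' directly; the paper manufactures them as products, e.g.\ $[0,a][x_a,x_a+a]=(a,1)$ where $\varphi(x_a,a)=1$.

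It is also worth noting that the paper's route avoids your hardest step altogether: instead of generating the point stabilizer $\Oo^{\pm}_{2m}(2)$ (resp.\ $\Sp_{2m}(2)$) by closed walks, it uses the rebasing lemma (Lemma \ref{l:infstart}, whose proof again rests on Corollary \ref{c:transconj}) to show $\L_\infty(\De)=\L(\De)$, identifies $\L(\De)$ as the group generated by \emph{all} elementary moves --- which is $\langle t_v\mid v\in V\rangle\cong\Sp_{2m}(2)$ by Lemma \ref{l:sumof2}, resp.\ $2^{2m}.\Sp_{2m}(2)$ --- and then deduces $\pi_\infty(\De)=\stab_{\L}(\infty)$ purely from the count $|\L|=n\cdot|\pi_\infty|$. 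If you want to salvage your approach, prove the conjugation identity first; with it, your step (ii) becomes feasible, but you would still need the (true, but nontrivial) fact that $\Oo^{\pm}_{2m}(2)$ is generated by its reflections, a fact the paper never needs.
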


Recall that to any design $\De$ and prime $p > 0$ one may associate the code $C_{\mathbb{F}_p}(\De)$,
the $\mathbb{F}_p$-rowspan of the incidence matrix of $\De$. In \cite{Puzz}, using GAP \cite{GAP} 
we constructed examples of non-full Conway groupoids whose hole stabilizer is a primitive subgroup of $\Sym(n-1)$. In each case we also constructed $C_{\mathbb{F}_p}(\De)$ for $p=2$ or $3$, and discovered that the code was \emph{completely transitive} and, therefore, also \emph{completely regular} (see Definitions \ref{d:ctrans} and \ref{d:creg} below).  

The following result, Theorem C, asserts that the same is true of the $\mathbb{F}_2$-linear codes $C_{\mathbb{F}_2}(\De^\ep)$ and $C_{\mathbb{F}_2}(\De^a)$  constructed using the designs considered in Theorem A. Theorem C also describes the covering radius and intersection array of these codes (see Definition \ref{d:creg}). Recall that the function $f_\ep$ is defined at \eqref{e:fep}.

\begin{thmc}
Let $m \geq 3$ and let $\De^\ep$ and $\De^a$ be as above. The following hold:

\begin{itemize}
\item[(a)] $C_{\mathbb{F}_2}(\De^\epsilon)$ is a completely transitive $[f_\epsilon(m),f_\epsilon(m)-(2m+1),4]$ code with covering radius $3$ and intersection array $$(f_\epsilon(m),f_\epsilon(m)-1,f_\epsilon(m)-2f_\ep(m-1);1,2f_\ep(m-1),f_\ep(m)).$$
\item[(b)] $C_{\mathbb{F}_2}(\De^a)$ is a completely transitive $[2^{2m},2^{2m}-(2m+2),4]$ code with covering radius $4$ and intersection array $$(2^{2m},2^{2m}-1,2^{2m-1},1;1,2^{2m-1},2^{2m}-1,2^{2m}).$$
\end{itemize}
\end{thmc}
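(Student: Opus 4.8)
The plan is to analyze the codes $C_{\mathbb{F}_2}(\De^\ep)$ and $C_{\mathbb{F}_2}(\De^a)$ by exploiting the symmetry coming from Theorem B. First I would set up notation: the incidence matrix $M$ of $\De^\ep$ has rows indexed by lines (blocks) and columns indexed by points $\Omega^\ep$; the code $C := C_{\mathbb{F}_2}(\De^\ep)$ is the row-span of $M$, and its dual $C^\perp$ consists of those $x \in \mathbb{F}_2^{\Omega^\ep}$ orthogonal to every line, i.e.\ supported on an even number of points of each block. Since every block has size $4$ and the bijection $\theta_a \leftrightarrow a$ identifies blocks with coset-like $4$-subsets $\{v_1, v_2, v_3, v_1+v_2+v_3\}$ of the affine quadric $V^\ep$, I expect $C^\perp$ to be identifiable with (the restriction to $V^\ep$ of) the span of the affine-linear functionals on $V$ together with the quadratic form $\theta_0$ — a space of dimension $2m+1$ — which would immediately give $\dim C = f_\ep(m) - (2m+1)$ and hence the claimed length and dimension. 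The minimum distance $4$ claim I would verify by showing $C^\perp$ contains no word of weight $\le 3$ other than possibly forced ones and by exhibiting an explicit weight-$4$ codeword (a single block, if blocks lie in $C$, or a short symmetric-difference argument), using supersimplicity (any two lines meet in $\le 2$ points) from Theorem A to control small-weight behaviour.

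Next I would establish complete transitivity. By definition this requires an automorphism group of $C$ acting transitively on each of the cosets of $C$ grouped by weight (equivalently, transitively on the set of cosets at each distance from $C$, and within the relevant combinatorial structure). The natural candidate is $\Sp_{2m}(2)$ (resp.\ $2^{2m}.\Sp_{2m}(2)$), which by Theorem B is realised as $\L_\infty(\De^\ep)$ inside $\Sym(\Omega^\ep)$ and which preserves the line set $\B^\ep$, hence preserves $M$ up to row permutations and therefore acts as automorphisms of $C$. I would then compute, coset representative by coset representative, the coset leaders of weight $0, 1, 2, 3$ (and $4$ in case (b)): weight-$0$ is $C$ itself; weight-$1$ cosets correspond to the points $\Omega^\ep$, on which $\Sp_{2m}(2)$ is transitive (single orbit, since $\Omega^\ep$ is one orbit); weight-$2$ cosets should correspond to unordered pairs of points, and transitivity on $2$-subsets follows from $2$-transitivity of the action of $\Sp_{2m}(2)$ on $\Omega^\ep$ (which is exactly the $2$-transitive action mentioned in the introduction); the depth-$3$ (and depth-$4$) cosets require identifying which triples (quadruples) are coset leaders and checking the group is still transitive on them — here one uses that a triple not contained in a block gives a genuine new coset and that the stabilizer structure forces transitivity.

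For the intersection array and covering radius I would use the standard machinery for completely regular codes: once complete transitivity is known, $C$ is completely regular, and the covering radius $\rho$ equals the number of nonzero coset-weight classes, which drops out of the dimension count $2^\rho \le |\mathbb{F}_2^n / C| = 2^{2m+1}$ combined with the explicit coset analysis showing exactly the classes of weights $0,1,2,3$ (resp.\ $0,1,2,3,4$) are nonempty and distinct. The intersection numbers $b_i$ (number of neighbours of a weight-$i$ coset leader moving to weight $i+1$) and $c_i$ (moving to weight $i-1$) I would read off by a direct count: $b_0 = n$ (flip any coordinate), $c_1 = 1$, and for $b_1, c_2$ etc.\ one counts, for a fixed point (resp.\ pair), how many coordinate-flips stay at the same combinatorial depth versus move up or down, using the block structure — e.g.\ $c_2 = 2f_\ep(m-1)$ should count the number of blocks through a fixed pair of points (this is the $\lambda$ of the design, $f_\ep(m-1)-1$, times $2$, matching the stated $2f_\ep(m-1)$ only if one is careful — so I would double-check this normalization), and the final entries $b_{\rho-1}$ or $c_\rho$ come from a counting identity or from the fact that the last class is "absorbing". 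Case (b) is handled identically with $V$ in place of $\Omega^\ep$, $2^{2m}.\Sp_{2m}(2)$ in place of $\Sp_{2m}(2)$, and one extra coset-weight class because the relevant functional space has dimension $2m+2$ and the point-action gives an extra depth.

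The main obstacle I anticipate is the precise determination of the depth-$2$ and depth-$3$ (and in case (b), depth-$4$) coset leaders and the verification that the automorphism group is transitive on each such class — in particular, pinning down exactly which $3$-subsets of points are coset leaders (as opposed to being equivalent modulo $C$ to a shorter word) requires understanding the small-weight words of $C$ itself, i.e.\ low-weight linear combinations of blocks, and ruling out "coincidences" forced by the quadratic relation $\sum \theta_0(v_i) = \theta_0(\sum v_i)$. Closely tied to this is getting the intersection numbers exactly right: the $c_i$ and $b_i$ are sensitive to how many blocks pass through a given $i$-subset of coordinates that is a coset leader, and reconciling the design parameter $\lambda = f_\ep(m-1)-1$ with the intersection-array entry $2f_\ep(m-1)$ will need a careful, self-consistent counting argument (likely via the standard equations $b_{i-1}c_i = (\text{something})$ and the sphere-covering bound $\sum_i \binom{n}{i}\,(\text{...}) = 2^{n-\dim C}$) rather than a naive block count.
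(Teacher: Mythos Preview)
Your overall strategy is sound and close in spirit to the paper's, but there are two substantive differences worth flagging, plus one place where you have not yet identified the key ingredient.

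\textbf{Differences in approach.} First, for part (b) the paper does not redo the analysis: it exhibits an explicit linear isomorphism between $C_{\mathbb{F}_2}(\De^a)$ and a code already constructed by Borges, Rif{\`a} and Zinoviev, and then simply cites their result for complete transitivity, covering radius and intersection array. Your plan treats (b) in parallel with (a), which would work but duplicates effort. Second, for the dimension the paper does \emph{not} compute $C^\perp$; instead it first proves complete transitivity and covering radius $3$, then reads off the intersection array, uses the identity $b_i\mu_i=c_{i+1}\mu_{i+1}$ to count cosets $\mu_0+\mu_1+\mu_2+\mu_3=2^{2m+1}$, and hence obtains $\dim C = f_\ep(m)-(2m+1)$. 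Your dual-code approach is cleaner in principle (and your guess about $C^\perp$ is essentially the content of the paper's codeword characterisation lemma), but you would still need to verify that the $2m+1$ linear conditions are independent on $\Omega^\ep$.

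\textbf{The gap.} Your weakest step is transitivity on the depth-$3$ cosets. You correctly note that one must identify which $3$-subsets of $\Omega^\ep$ are coset leaders and then check $\Sp_{2m}(2)$ is transitive on them, but you give no mechanism for the second part. The paper's mechanism is a separate, nontrivial result (its Theorem on $3$-subsets): $\Sp_{2m}(2)$ has exactly two orbits on unordered $3$-subsets of $\Omega^\ep$, distinguished by whether $\theta_0(v_1+v_2+v_3)$ equals $\ep$ or $1-\ep$. Since both $\Gamma_3(0)\cap C_1^\ep$ and $\Gamma_3(0)\cap C_3^\ep$ are nonempty and $\Sp_{2m}(2)$-invariant, these two orbits are forced to coincide with them, giving transitivity on $C_3^\ep$. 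Without this orbit classification (or an equivalent argument) your depth-$3$ step does not go through; and the analogous depth-$3$/$4$ step for (b) likewise rests on the $3$-subset orbit structure for $2^{2m}.\Sp_{2m}(2)$. Finally, your confusion over $c_2$ is easily resolved: the paper counts $c_2=2+2\lambda=2f_\ep(m-1)$, where the extra $2$ comes from the two weight-$1$ neighbours of a weight-$2$ vector, not from blocks.
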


In fact, part (b) above is a consequence of a result of Borges, Rif{\`a}, and  Zinoviev \cite{BRZ}. Completely regular and completely transitive codes 
have been studied extensively, and the existence and enumeration of such codes are open hard problems (see \cite{distreg, delsarte, neum}
and more recently \cite{nonantipodal, BRZ, rho=2,giudici, binctrarb,kronprod,lifting}).

In \cite[Question 8.4]{Puzz} we ask the following question. Suppose a Conway groupoid associated to a supersimple design $\mathcal{D}$ is not
full and has a primitive hole stabilizer. Then does the incidence matrix of $\mathcal{D}$ generate a 
completely regular and/or uniformly packed $\mathbb{F}_p$-linear code for some prime $p > 0$? 
Since completely transitive codes are necessarily completely regular, by combining Theorems B and C we obtain an 
affirmative answer to this question for the designs $\De^\ep$, $\De^a$.

The remainder of the paper is concerned with (abstract) Conway groupoids and our next main result classifies all Conway groupoids that satisfy a particular group-theoretic condition. 
For a supersimple design $\De$ with point set $\Omega$, the hole stabilizer $G:=\pi_\infty(\De)$ is generated by elements of the form $[\infty, a, b, \infty]$
for $a,b\in\Omega\backslash\{\infty\}$ (see Section \ref{sub:design} for full discussion on $\pi_\infty(\De)$). The next result is dependent on the Classification of Finite Simple Groups (CFSG) through its use of Theorem~\ref{t: ls}.

\begin{thmd}
Suppose that $\De$ is a supersimple $2-(n,4,\lambda)$ design, that $\infty$ is a point in $\De$, and write $\L:=\L_\infty(\De)$. Suppose, furthermore, that $[\infty, a, b, \infty]=1$ whenever $\infty$ is collinear with $\{a,b\}$. Then one of the following is true:
\begin{enumerate}
 \item $\De$ is a Boolean design and $\L=(\mathbb{F}_2)^k$ for some $k > 0$;
 \item $\De=\mathbb{P}_3$ (the projective plane of order $3$) and $\L = M_{13}$; or 
 \item $\L = \Alt(n)$.
\end{enumerate}
\end{thmd}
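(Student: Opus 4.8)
The plan is to understand the structure forced by the hypothesis $[\infty,a,b,\infty]=1$ whenever $\{a,b,\infty\}$ is contained in a line, and then to leverage the classification of $2$-transitive groups via Theorem~\ref{t: ls} to narrow down $\L$. First I would record the elementary consequences of the hypothesis: the generator $[\infty,a,b,\infty]$ of $\pi_\infty(\De)$ is, by its definition, the product of the two ``moves'' through the unique fourth point on the line $\{\infty,a,b,c\}$; the vanishing of all such ``short'' generators means that move sequences of length two that pass through a line on $\infty$ act trivially. I would then argue that this collapsing propagates: any element of $\pi_\infty(\De)$ can be written using only generators of the form $[\infty,a,b,\infty]$ with $\{a,b\}$ not collinear with $\infty$, and moreover the relation forces a strong form of commutativity or of regularity on how lines through $\infty$ interact. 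The first task is therefore a careful bookkeeping lemma translating the hypothesis into a statement about the line set $\B$ of $\De$ around the point $\infty$, most likely: for each line $\ell$ through $\infty$ with remaining points $\{a,b,c\}$, the three transpositions-like moves coincide, which pins down how $\ell$ sits relative to the other lines.

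Next I would bring in the known general theory. By \cite[...]{Puzz} (results quoted in the excerpt), $\L_\infty(\De)$ is a subset of $\Sym(n)$ whose associated hole stabilizer $\pi_\infty(\De)$ is a transitive (in fact, one expects $2$-transitive under the present hypothesis) subgroup of $\Sym(n-1)$, generated by the elements $[\infty,a,b,\infty]$. I would show, using the hypothesis, that $\pi_\infty(\De)$ is generated by a conjugacy class of elements each of which is either trivial or a ``small support'' permutation, so that $G:=\pi_\infty(\De)$ is a $2$-transitive group generated by elements of small support. Theorem~\ref{t: ls} (the CFSG-dependent input flagged in the excerpt, presumably a classification of $2$-transitive or primitive groups generated by elements with bounded support, or the Liebeck--Saxl type result on primitive groups containing elements of small support) then forces $G$ to be one of a short explicit list: an affine group $2^k$ acting regularly (giving case (1), the Boolean design), $M_{12}$ (giving case (2), with $\De=\mathbb{P}_3$, $\L=M_{13}$), or $\Alt(n-1)$ or $\Sym(n-1)$, which after reattaching the point $\infty$ and recalling that $\L$ lies in $\Alt(n)$ becomes $\L=\Alt(n)$, case (3). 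Along the way I would need to eliminate the symmetric group on $n-1$ points as a possibility for $G$ not already covered, and to verify that the small sporadic possibilities other than $M_{12}$ genuinely cannot arise — this is the point where one checks, case by case, whether a supersimple $2$-$(n,4,\lambda)$ design with the required local structure at $\infty$ exists, typically by a short counting or computer-assisted argument.

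The three cases then need to be realized concretely. For (1): if $G$ is the regular elementary abelian group, I would show the hypothesis forces $\lambda$ and the line set to be exactly those of a Boolean (Hadamard-type) design, so $\De$ is a Boolean design and $\L=(\mathbb{F}_2)^k$; this should follow from the structure of $\L_\infty$ as a union of cosets and the fact that $\pi_\infty$ regular plus the generating relations force the affine $\mathbb{F}_2$-structure. For (2): I would identify the design from its parameters — a $2$-$(13,4,1)$ design whose hole stabilizer is (a copy of) $M_{12}$ — and cite the uniqueness of $\mathbb{P}_3$ and Conway's description to get $\L=M_{13}$. For (3): once $G\geq\Alt(n-1)$, a standard argument (as in \cite{Puzz}, using that $\L_\infty$ contains $\pi_\infty$ together with enough additional move sequences to move $\infty$, and closure of $\L_\infty$) upgrades this to $\L=\Alt(n)$.

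The main obstacle I expect is the step of translating the purely combinatorial hypothesis into the group-theoretic input needed for Theorem~\ref{t: ls}: namely showing that the relation $[\infty,a,b,\infty]=1$ on collinear triples really does force the generators of $\pi_\infty(\De)$ to be elements of small enough support (or of some other restricted type) that the classification applies, and simultaneously handling the bookkeeping that rules out the ``extra'' cases the classification throws up (various sporadic or classical groups) by showing no supersimple $2$-$(n,4,\lambda)$ design with the prescribed local geometry at $\infty$ exists for those. A secondary technical nuisance will be carefully distinguishing the behaviour of $\L_\infty(\De)$ (a subset of $\Sym(n)$) from that of $\pi_\infty(\De)$ (a subgroup of $\Sym(n-1)$) throughout, since the conclusion is phrased in terms of $\L$ but the group theory naturally lives with $\pi_\infty$.
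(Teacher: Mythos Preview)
Your proposal misses the central structural step and misidentifies where the real difficulty lies. You say the main obstacle is ``showing that the relation $[\infty,a,b,\infty]=1$ on collinear triples really does force the generators of $\pi_\infty(\De)$ to be elements of small enough support''. But the generators $[\infty,a,b,\infty]$ \emph{always} have support at most $6\lambda+2$, regardless of the hypothesis; this is elementary (Lemma~\ref{l: kkk}). The Liebeck--Saxl bound $\mu(H)\geq 2(\sqrt{d}-1)$ is useless until you know $n$ is large compared to $\lambda$, and nothing in your outline establishes that. Nor do you justify the claim that $\pi_\infty(\De)$ is $2$-transitive, or even primitive: Theorem~E~(2) gives primitivity only once $n>9\lambda+1$, and you have no mechanism to force this.

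What the paper actually does is to extract from the hypothesis a strong \emph{combinatorial} consequence first (Proposition~\ref{p:bool}): the vanishing of $[\infty,a,b,\infty]$ for collinear $\{\infty,a,b\}$ forces each set $\overline{a,b}$ to carry a Boolean subdesign, hence $\lambda=2^\alpha-1$ for some $\alpha$, and the sets $\overline{a,b}$ themselves form the lines of an auxiliary $2\text{-}(n,k,1)$ design with $k=2^{\alpha+1}$. Fisher's inequality on this design then yields $n\geq k^2-k+1>9\lambda+1$, so primitivity follows from Theorem~E~(2). Only now can Liebeck--Saxl bite, and combining it with the divisibility constraints $k-1\mid n-1$, $k(k-1)\mid n(n-1)$ pins $n$ down to four explicit values in terms of $k$ (Corollary~\ref{c: bool}). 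The remaining work is not ``case by case elimination of sporadic groups'' but rather a number-theoretic analysis (Lemmas~\ref{l: b}, \ref{l: c}, using the Catalan-type Lemma~\ref{t: catalan}) showing that $n-1$ cannot have the shape $\binom{m}{\ell}^r$ required by the exceptional clause of Theorem~\ref{t: ls}, together with a support bound in the product action (Lemma~\ref{l: d}). Your outline contains none of these ingredients.
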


Theorem D is a generalization of \cite[Theorem B]{Puzz} (concerning designs associated with trivial hole stabilizer) as well as a generalization of the classification of Conway groupoids associated with supersimple $2-(n,4,1)$ designs (when $\lambda=1$ the extra supposition is automatically satisfied). 

Theorem D is closely connected to our final main result, Theorem E, below. Indeed we will use Theorem E (2) to prove Theorem D, and then will use Theorem D to prove Theorem E (4).

\begin{thme}
Suppose that $\De$ is a supersimple $2-(n,4,\lambda)$ design, that $\infty$ is a point in $\De$, and that $\L:=\L_\infty(\De)$. Let $G:=\pi_\infty(\De)$ be the hole stabilizer of $\infty$, considered as a permutation group via its natural embedding in $\Sym(n)$.
 \begin{enumerate}
  \item If $n>4\lambda+1$, then $G$ is transitive;
  \item if $n> 9\lambda+1$, then $G$ is primitive;
  \item if $n> 144\lambda^2+120\lambda+26$, then $\L$ contains $\Alt(n)$;
\item  If $n>9\lambda^2-12\lambda+5$, then one of the following holds:
\begin{enumerate}
\item $\L$ contains $\Alt(n)$;
\item $\lambda=1$, $\De=\mathbb{P}_3$ (the projective plane of order $3$), and $\L =  M_{13}$. 
\end{enumerate}
\end{enumerate}
\end{thme}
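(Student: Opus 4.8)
The plan is to prove the four parts more or less in the order stated, bootstrapping from transitivity up to the full classification; parts (1)--(3) are quantitative and follow from counting arguments combined with standard facts about permutation groups, while part (4) is the delicate one and needs Theorem D together with the O'Nan--Scott-type machinery packaged in Theorem~\ref{t: ls} (hence CFSG). Throughout I would work with the hole stabilizer $G=\pi_\infty(\De)$ as a subgroup of $\Sym(\Omega\setminus\{\infty\})$ and exploit the fact, recalled in Section~\ref{sub:design}, that $G$ is generated by the elements $[\infty,a,b,\infty]$ for $a,b$ collinear with $\infty$. The key combinatorial input is that, for a fixed point $\infty$ and a fixed second point $a$, the number of lines through $\{\infty,a\}$ is exactly $\lambda$, so each such line contributes two further points and one sees that $a$ is ``linked'' to at most $2\lambda$ other points via a single move of the form $[\infty,a,b,\infty]$. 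Iterating and using connectivity of the design gives the transitivity bound in (1): if $n>4\lambda+1$ one shows the orbit of any point under these generators cannot be a proper block, essentially because the design's incidence forces enough overlap.

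For (2), primitivity, I would assume $G$ is transitive (legitimate once $n>4\lambda+1\leq 9\lambda+1$) and suppose for contradiction that $\Sigma$ is a nontrivial block of imprimitivity containing a fixed point $a$. The generators $[\infty,a,b,\infty]$ that fix $a$ must fix $\Sigma$ setwise, and a point $b$ collinear with both $\infty$ and $a$ on a common line will, after applying the appropriate move, be forced into $\Sigma$; counting how many such $b$ there are (again governed by $\lambda$ and the replication number $r=\lambda(n-1)/3$) against the block size $|\Sigma|$, which must divide $n-1$, yields the bound $n>9\lambda+1$. I expect these first two parts to be essentially the arguments already present in \cite{Puzz} for the $\lambda=1$ case, carried through with $\lambda$ kept as a parameter.

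For (3), once $G$ is primitive one invokes a Bochert/Praeger--Saxl--type bound: a primitive group of degree $n-1$ that is not $\Alt$ or $\Sym$ has order at most some explicit exponential (or, via CFSG, has a bounded base or small minimal degree). The strategy is to produce inside $\L$ an element of small support --- concretely, a $3$-cycle or a product of a bounded number of transpositions --- by composing two carefully chosen move sequences that agree off a small set; the design axioms bound how much two lines can overlap (supersimplicity: at most two common points), so a commutator of two moves $[\infty,a,b,\infty]$ and $[\infty,c,d,\infty]$ whose supports are nearly disjoint has support of size $O(\lambda)$. Combined with primitivity and Jordan's theorem (a primitive group containing a $3$-cycle is $\Alt$ or $\Sym$, and more generally one with a small-support element of prime order is), the threshold $n>144\lambda^2+120\lambda+26$ should fall out of making the ``nearly disjoint'' count precise; the quadratic in $\lambda$ is the tell-tale sign that one pays $\lambda$ once to choose the first line and again to make the second line avoid it.

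The hard part is (4), where the bound is only quadratic-with-negative-linear-term $n>9\lambda^2-12\lambda+5$, i.e.\ genuinely sharper than (3), and the conclusion must either be $\Alt(n)\leq\L$ or the single sporadic exception $(\De,\L)=(\mathbb{P}_3,M_{13})$. Here the plan is: first use (2) to get $G$ primitive, then split according to whether the extra hypothesis of Theorem D --- that $[\infty,a,b,\infty]=1$ whenever $\{a,b\}$ is collinear with $\infty$ --- holds or fails. If it holds, Theorem D applies directly and gives exactly the three listed outcomes (Boolean, $M_{13}$, or $\Alt(n)$), and one checks the Boolean case is excluded by the numerical hypothesis; this is the step that makes the logical dependence ``use Theorem~E(2) to prove Theorem~D, then use Theorem~D to prove Theorem~E(4)'' non-circular, since Theorem D only consumes part (2). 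If the extra hypothesis fails, there is some collinear $\{a,b\}$ with $g:=[\infty,a,b,\infty]\neq 1$, and this $g$ is an honest nontrivial element of $G$ of controlled support (supported on the union of the $\lambda$ lines through $\infty,a$ and the $\lambda$ through $\infty,b$, so on $O(\lambda)$ points); feeding $g$ and its conjugates into the classification of primitive groups with a small-support element (Theorem~\ref{t: ls}) forces $\L\geq\Alt(n)$ as soon as $n$ exceeds the stated quadratic bound, with the $M_{13}$/$\mathbb{P}_3$ case surviving only at $\lambda=1$. The main obstacle I anticipate is getting the constants right in this last dichotomy --- in particular bounding $|\supp(g)|$ tightly enough (one wants roughly $4\lambda-2$, not a cruder $O(\lambda)$) so that Theorem~\ref{t: ls}'s hypothesis ``degree large compared to support'' translates into precisely $n>9\lambda^2-12\lambda+5$ rather than something weaker, and simultaneously verifying that no primitive action of a group like $\Sp_{2m}(2)$ on quadratic forms sneaks under the bound for large $\lambda$.
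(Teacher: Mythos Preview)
Your overall architecture for part (4) is right --- split on whether the hypothesis of Theorem~D holds, invoke Theorem~D in one branch and a minimal-degree bound in the other --- and this is exactly what the paper does. But there are two substantive differences worth flagging, one in part (3) and one in the execution of part (4).

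For part (3), you are working much harder than necessary. There is no need to manufacture small-support elements via commutators or to invoke Jordan's theorem on $3$-cycles: the generators $[\infty,a,b,\infty]$ themselves already have support at most $6\lambda+2$ (Lemma~\ref{l: kkk}), and once $G$ is primitive one simply feeds this directly into Babai's CFSG-free bound $\mu(H)\geq\tfrac12(\sqrt{d}-1)$ (Theorem~\ref{t: babai}). Squaring the resulting inequality $6\lambda+2\geq\tfrac12(\sqrt{n-1}-1)$ is the sole source of the quadratic, giving $n\leq 144\lambda^2+120\lambda+26$; it has nothing to do with ``paying $\lambda$ twice'' in a commutator construction. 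Your proposed route might be made to work, but it would not obviously recover the same constant, and it obscures that (3) is CFSG-free.

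For part (4), your dichotomy is correct but the numerics are off in a way that matters. When the hypothesis of Theorem~D fails, you have a nontrivial $g=[\infty,a,b,\infty]$ with $\infty\in\overline{a,b}$. Its support lies in $\overline{\infty,a}\cup\overline{a,b}\cup\overline{b,\infty}$ (three sets, not two --- you dropped the middle move $[a,b]$), and the collinearity of $\{\infty,a,b\}$ forces enough extra overlap that $|\supp(g)|\leq 6\lambda-6$, not the $4\lambda-2$ you suggest. Plugging $6\lambda-6$ into the Liebeck--Saxl bound $\mu(H)\geq 2(\sqrt{d}-1)$ from Theorem~\ref{t: ls} yields $\sqrt{n-1}\leq 3\lambda-2$, i.e.\ $n\leq 9\lambda^2-12\lambda+5$ on the nose; with $4\lambda-2$ you would get a different (and wrong) constant. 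Finally, there is no separate check needed for the $\Sp_{2m}(2)$ actions: Theorem~\ref{t: ls} already covers all primitive groups, so once the support bound contradicts the minimal-degree inequality you are done.
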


Note that only the fourth item of Theorem E is dependent on CFSG.  Note too that if $\L$ contains $\Alt(n)$ (as in part (3) and (4) of the theorem), then
\[
 \L=\left\{\begin{array}{ll}
           \Alt(n), & \textrm{ if $\lambda$ is odd;} \\
           \Sym(n), & \textrm{ if $\lambda$ is even.}
          \end{array}\right.
\]

\subsection{Classifying Conway groupoids}
Theorem E provides a powerful tool in the program to classify Conway groupoids for arbitrary $\lambda$ and $n$. Such a classification was completed in \cite{Puzz} for $\lambda\leq 2$ and in Section \ref{s: l3} we make some remarks about the case $\lambda=3$. What about the general case?

Firstly note that Theorem E has an immediate corollary:

\begin{Cor}\label{c: e}
Let $\lambda$ be a positive integer. There are a finite number of (isomorphism classes of) groupoids that crop up as Conway groupoids associated with a supersimple $2-(n,4,\lambda)$ design.
\end{Cor}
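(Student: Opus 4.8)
The plan is to deduce everything from Theorem E(3), so the main content is simply a counting/finiteness argument once that bound is in hand. Fix $\lambda > 0$. Suppose $\De$ is a supersimple $2-(n,4,\lambda)$ design, $\infty$ a point, and $\L := \L_\infty(\De)$. Theorem E(3) tells us that if $n > 144\lambda^2 + 120\lambda + 26$, then $\L$ contains $\Alt(n)$, and by the remark following Theorem E we then have $\L = \Alt(n)$ or $\L = \Sym(n)$ according to the parity of $\lambda$. In particular, for $n$ above this threshold the set of morphisms $\L_\infty(\De)$ — and hence (by the discussion in Section~\ref{sub:conway} that the Conway groupoid $\C(\De)$ is determined by $\L_\infty(\De)$) the isomorphism class of the Conway groupoid — is one of at most two possibilities, depending only on $\lambda \bmod 2$, not on the particular design $\De$. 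So the only way to get infinitely many isomorphism classes would be to have infinitely many coming from designs with $n \leq N(\lambda) := 144\lambda^2 + 120\lambda + 26$.

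First I would record that for each fixed $n$ there are only finitely many supersimple $2-(n,4,\lambda)$ designs: a design is determined by its point set (of size $n$) together with its line set (a collection of $4$-subsets of an $n$-set), and there are only finitely many such collections. Hence there are only finitely many such designs with $n \leq N(\lambda)$, and a fortiori only finitely many isomorphism classes of Conway groupoids arising from them. Combining this with the previous paragraph: every Conway groupoid associated with a supersimple $2-(n,4,\lambda)$ design is either one of these finitely many "small" groupoids, or else (when $n > N(\lambda)$) is the groupoid determined by $\Alt(n)$ or $\Sym(n)$.

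It remains to see that the "large" case contributes only finitely many isomorphism classes as well. This is the only point requiring a small argument rather than a pure appeal to Theorem E: a priori there is one groupoid $\C(\De)$ for each admissible $n$, and $n$ is unbounded. However, for $n > N(\lambda)$ the groupoid is built in a uniform way — its objects are the $n$ points and its morphisms are all of $\L_\infty(\De) \in \{\Alt(n), \Sym(n)\}$ acting in the standard way — and one checks that two such groupoids on point sets of different cardinalities are never isomorphic, while groupoids on a point set of a fixed cardinality $n$ again number only finitely many. Wait: this shows the large case gives infinitely many groupoids after all, one per value of $n$, so the Corollary as literally stated about \emph{all} $n$ cannot follow from finiteness alone.

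Resolving this is the crux, and it is where one must read the statement correctly: the natural reading — and the one consistent with \cite[Conjecture 8.1]{Puzz} and the framing in the introduction — is that one counts groupoids up to the equivalence that identifies $\Alt(n)$- and $\Sym(n)$-type groupoids as "the full groupoid on $n$ points", i.e. one is asking for finiteness of the \emph{non-full} isomorphism classes. With that reading the proof is complete: by Theorem E(3) a non-full Conway groupoid forces $n \leq N(\lambda)$, and by the finiteness of designs on a bounded point set there are only finitely many such. The main obstacle, then, is not any hard mathematics — Theorem E does all the heavy lifting — but pinning down the precise notion of "isomorphism class" under which the count is finite; once "full groupoids on $n$ points" are appropriately grouped (or excluded), the corollary is immediate. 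I would state the corollary's proof as: "Immediate from Theorem E(3): any Conway groupoid that is not full arises from a design with $n \leq 144\lambda^2 + 120\lambda + 26$ points, and there are only finitely many supersimple $2-(n,4,\lambda)$ designs on a point set of bounded size."
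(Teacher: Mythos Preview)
Your approach is exactly the paper's: the corollary is presented there simply as an ``immediate corollary'' of Theorem~E, with no further argument. So on the level of \emph{method} you are in full agreement.

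Where you go further than the paper is in noticing a genuine imprecision in the statement itself, and your analysis of it is correct. By the paper's own definition of isomorphism of Conway groupoids (Section~\ref{sub:conway}), two groupoids on point sets of different cardinalities are never isomorphic; since for fixed $\lambda$ there are infinitely many admissible values of $n$ (e.g.\ for $\lambda=1$, Steiner systems $S(2,4,n)$ exist for infinitely many $n$), the full groupoids alone already give infinitely many isomorphism classes in the literal sense. The paper does not address this. Your proposed resolution --- that the intended assertion is finiteness of \emph{non-full} Conway groupoids, equivalently \cite[Conjecture~8.1]{Puzz} --- is supported by the introduction, which says explicitly that ``Theorem~E yields a proof of \cite[Conjecture~8.1]{Puzz}, which asserts that for each $\lambda>0$ there exist only finitely many supersimple $2-(n,4,\lambda)$ designs whose Conway groupoids are not full.'' With that reading, your one-line proof (Theorem~E(3) bounds $n$; finitely many designs on a bounded point set) is precisely what is needed and matches the paper's intent.
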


Corollary~\ref{c: e} makes an interesting companion to Theorem E which implies that if $\lambda$ is allowed to vary, then there are an infinite number of (isomorphism classes) of groupoids that crop up as Conway groupoids.

One might naturally ask whether the bounds in Theorem E can be substantially improved as this would be an obvious aid to a classification. Unfortunately the relative dearth of examples of Conway groupoids makes this question difficult to answer: the only infinite families of non-full Conway groupoids which have been constructed to this point are those associated to the Boolean designs (for which $n=2\lambda+2$ \cite{Puzz}) and the examples in Theorem C (for which $n<5\lambda$). The parameters in these examples are a long way from the bounds given in Theorem E suggesting, perhaps, that there is plenty of room for improvement.

In a different direction we note that both Theorem D and Theorem E (4) suggest that the Conway groupoid associated to $M_{13}$ is particularly special. Indeed we have another reason to think this might be the case. 

Suppose that $\C$ is a Conway groupoid associated with a design $\De$, and suppose further that the hole stabilizer $\pi_\infty(\De)$ is primitive. If $\De$ is not $\mathbb{P}_3$, the projective plane of order $3$ and $\C$ is not full then in all examples known so far,  $\L_\infty(\De)$ is a transitive subgroup of $\Sym(n)$ with $\pi_\infty(\De)$ the stabilizer of the point $\infty$ in $\L_\infty(\De)$. Since, by supposition, $\pi_\infty(\De)$ is primitive, this implies that $\L_\infty(\De)$ is a {\bf 2-primitive} permutation group (i.e. a primitive group with a stabilizer primitive on its non-trivial orbit). We conjecture that this behaviour is general.


\begin{Conj}\label{c: groupoid}
Suppose that $\De$ is a supersimple $2-(n,4,\lambda)$ design other than $\mathbb{P}_3$, that $\infty$ is a point in $\De$ and that the hole stabilizer $\pi_\infty(\De)$ is primitive. Then $\L_\infty(\De)$ coincides with a 
2-primitive subgroup $H$ of $\Sym(n)$ and $\pi_\infty(\De)$ is equal to the stabilizer in $\L_\infty(\De)$ of the point $\infty$. 
\end{Conj}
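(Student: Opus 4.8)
The plan is to reduce the conjecture to the single assertion that $\L:=\L_\infty(\De)$ is a subgroup of $\Sym(n)$; everything else is then formal. Since any two points of $\De$ lie on a common line, the groupoid $\C(\De)$ is connected, so $\L$ is the disjoint union of the sets $\Hom(\infty,a)$, $a\in\Omega$, each of cardinality $|\pi_\infty(\De)|$; moreover a move sequence starting at $\infty$ fixes the point $\infty$ if and only if it also terminates at $\infty$. Hence, if $\L$ happens to be a group, it acts transitively on $\Omega$ with stabilizer $\L_\infty=\pi_\infty(\De)$; as $\pi_\infty(\De)$ is primitive, and in particular transitive, on $\Omega\setminus\{\infty\}$, the group $\L$ is $2$-transitive, and primitivity of $\pi_\infty(\De)$ then makes $\L$ a $2$-primitive subgroup of $\Sym(n)$ with point stabilizer $\pi_\infty(\De)$ -- exactly the conclusion sought. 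Since the conjecture conversely forces $\L$ to be a group, the statement is equivalent, under its hypotheses, to ``$\L_\infty(\De)$ is a subgroup of $\Sym(n)$''. (We shall also use repeatedly the elementary fact that an overgroup of a primitive permutation group is primitive.)

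Second, the case of large $n$ follows immediately from results already in hand. Put $H:=\langle\L\rangle\leq\Sym(\Omega)$; this is transitive because $\De$ is connected. If $n>9\lambda^2-12\lambda+5$, then, since $\De\neq\mathbb{P}_3$, Theorem E(4) forces $\Alt(n)\subseteq\L$, so that $\L\in\{\Alt(n),\Sym(n)\}$ is a group and the first paragraph finishes the argument. We may therefore assume $n\leq 9\lambda^2-12\lambda+5$; by Corollary~\ref{c: e} there are then, for each fixed $\lambda$, only finitely many pairs $(\De,\infty)$ to consider up to isomorphism, so the conjecture reduces to a family of designs which is finite for each $\lambda$ (though unbounded as $\lambda$ varies) and all of whose hole stabilizers are primitive.

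Third, and this is the crux, one must handle the remaining regime of small $n$. The natural plan is to analyse $H=\langle\L\rangle$ directly: in a putative counterexample $H$ is a $2$-transitive group with $\L\subsetneq H$ and $\pi_\infty(\De)\subsetneq H_\infty$ (the stabilizer of $\infty$ in $H$), where both $\pi_\infty(\De)$ and $H_\infty$ are primitive on $\Omega\setminus\{\infty\}$. Appealing to CFSG via the classification of $2$-transitive groups, one would split into the affine and almost simple cases and exploit two sources of rigidity. First, $H$ is generated by the elementary moves, which are double transpositions whose supports lie in the $4$-point lines of $\De$, with supersimplicity controlling the possible overlaps, while $\pi_\infty(\De)$ is generated by the loops $[\infty,a,b,\infty]$, which are likewise constrained. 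Second, $\pi_\infty(\De)$ and $H_\infty$ both act on the substructure that $\De$ induces on $\Omega\setminus\{\infty\}$. The goal would be to show that $H_\infty$ is generated by $\pi_\infty(\De)$ together with the elementary moves along lines $\ell\not\ni\infty$, and then that each such move already lies in $\pi_\infty(\De)$, being realised by a loop at $\infty$: travel from $\infty$ to a point of $\ell$, apply the move, and return. Producing these return paths is precisely where the design axioms and supersimplicity must enter, and it is there that $\mathbb{P}_3$ should reappear as the sole exception.

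I expect this third step to be the main obstacle. The condition that $\L$ be closed under multiplication is a global one and is not implied by any local combinatorial feature of $\De$: the example of $M_{13}$, in which $\pi_\infty(\De)=M_{12}$ is $5$-transitive and yet $\L=M_{13}$ is not a group, shows that no amount of richness of the hole stabilizer alone can be decisive, so a genuine structural dichotomy is needed. Carrying the return-path argument through in general amounts to showing that, whenever $\De\neq\mathbb{P}_3$, the design $\De$ is essentially a coset geometry for the $2$-primitive group $H$; ruling out sporadic near-misses without an exhaustive case analysis appears to require considerably more structure theory of Conway groupoids than is currently available, which is presumably why the statement is offered here as a conjecture rather than a theorem.
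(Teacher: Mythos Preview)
The statement you are attempting to prove is stated in the paper as a \emph{conjecture}, not a theorem; the paper offers no proof of it and indeed remarks that ``in forthcoming work the authors prove Conjecture~\ref{c: groupoid} \ldots\ provided the design $\De$ satisfies two mild combinatorial suppositions'' \cite{Puzz2}. So there is no paper proof to compare against, and your concluding paragraph correctly diagnoses the situation.

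That said, your reduction in the first paragraph is correct and worth recording: the observation that $\infty^{[\infty,a_1,\ldots,a_k]}=a_k$ shows that the set of elements of $\L$ fixing $\infty$ is exactly $\pi_\infty(\De)$, so if $\L$ happens to be a group it is automatically $2$-primitive with point stabilizer $\pi_\infty(\De)$, and conversely the conjecture forces $\L$ to be a group. Your second paragraph is also correct: Theorem~E(4) disposes of the range $n>9\lambda^2-12\lambda+5$ outright, since then $\L\supseteq\Alt(n)$ is a group. The genuine content of the conjecture therefore lies entirely in the bounded regime $n\le 9\lambda^2-12\lambda+5$, and here your third paragraph is a reasonable sketch of a strategy but not a proof: the key unproved assertion is that every elementary move $[c,d]$ with $\infty\notin\{c,d\}$ can be realised as a closed loop $[\infty,\ldots,\infty]$. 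You correctly note that $M_{13}$ shows this can fail even when $\pi_\infty(\De)$ is highly transitive, so some structural input beyond primitivity of the hole stabilizer is required. The paper's own evidence for the conjecture is Theorem~B, where this closure property is verified directly for the specific designs $\De^\ep$ and $\De^a$ via the explicit identity $[\infty,a,b,\infty]=[a,b]$ whenever $\infty\notin\overline{a,b}$ (Corollary~\ref{c:transconj}); no general mechanism is supplied.
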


We remark that all 2-primitive permutation groups are known thanks to CFSG and the list is rather short (see \cite{mps} for some discussion). Thus this conjecture implies a very strong restriction on the structure of a Conway groupoid with primitive hole stabilizer and a proof would be a very significant step towards a classification.

One could push the conjecture a little further. Let us operate under the suppositions of Conjecture~\ref{c: groupoid} and assume, moreover, that $\L_\infty(\De)$ does not contain $\Alt(n)$. Now all known examples satisfy two further properties: 

Firstly, the elements of $\L_\infty(\De)$ are automorphisms of the design $\De$. Secondly, the group $\L_\infty(\De)$ is a {\it 3-transposition group} with associated class of transpositions coinciding with the set
\[
\{[a,b] \mid a,b\in \Omega\}.
\]
(Here $\Omega$ is the point set of $\De$; the elements $[a,b]$ are defined in Section \ref{sub:design}.)

In forthcoming work the authors prove Conjecture~\ref{c: groupoid} and the two additional statements just mentioned, provided the design $\De$ satisfies two mild combinatorial suppositions. It is expected that, by exploiting Fischer's famous theorem on 3-transposition groups, this will lead to a full classification of Conway groupoids in this restricted situtation \cite{Puzz2}.

\subsection{Structure of the paper}

The paper is structured as follows. Section \ref{s:back} provides the necessary background from design theory, group theory and coding theory.  In Section \ref{s:action} we give a precise description of the action of $\Sp_{2m}(2)$ on quadratic forms, introduce the designs $\De^a$ and $\De^{\ep}$ and prove Theorem A.
The Conway groupoids $\C(\De^a)$ and $\C(\De^\ep)$ are studied in Section \ref{s:puzz} where we establish Theorem B. In Section \ref{s:codes} we study the codes $C_{\mathbb{F}_2}(\De^a)$ and $C_{\mathbb{F}_2}(\De^\ep)$ in detail and give a  proof of Theorem C. 

Sections \ref{s:puzzlarge} and \ref{s:imprim} are devoted to the study of general Conway groupoids; in particular in  Section \ref{s:puzzlarge} we prove Theorem D  before proving Theorem E in Section \ref{s:imprim}. Section~\ref{s: l3} contains a discussion of the classification of Conway groupoids with $\lambda=3$.

\section{Background}\label{s:back}
\subsection{Block designs and moves}\label{sub:design}


Recall that a \textit{balanced incomplete block design} $(\Omega,\B)$, or $t-(n,k,\lambda)$ design, is a finite set $\Omega$ of size $n$, together with a finite multiset $\B$ of subsets of $\Omega$ each of size $k$ (called \textit{lines}), such that any subset of $\Omega$ of size $t$ is contained in exactly $\lambda$ lines. 

In what follows $\De=(\Omega,\B)$ is a $2-(n,4,\lambda)$ design. We assume, moreover, that $\De$ is \textit{supersimple}, i.e. any pair of lines intersect in at most two points. (Note that, in particular, $\De$ is {\it simple}, i.e. there are no repeated lines.)

Let $a$ and $b$ be distinct points in $\Omega$. We define, first,
\begin{equation}\label{d:overline}\overline{a,b}:=\{x\in\Omega\,|\,\textnormal{there exists $\ell\in\B$ such that $x,a,b,\in\ell$}\}
\end{equation}
In particular, note that $a,b\in\overline{a,b}$.  

Next, we define the \textit{elementary move} associated with $a$ and $b$: this is the permutation 
\begin{equation}\label{e:elem move}
[a,b]:=(a,b)\prod_{i=1}^\lambda (a_i,b_i) \in \Sym(\Omega),
\end{equation}
 where $\{a,b,a_i,b_i\}$ is a line for each $1 \leq i \leq \lambda$. The fact that $\De$ is supersimple guarantees that the product \eqref{e:elem move} is well-defined. We note, moreover, that $[a,b]=[b,a]$ and that the set of points in $\Omega$ moved by the permutation $[a,b]$ (also called the \textit{support} of $[a,b]$) is precisely the set $\overline{a,b}$.
 
 A \textit{move sequence} is a product of elementary moves 
 \begin{equation}
[a_0,a_1,\ldots,a_k]:=[a_0,a_1] \cdot [a_1,a_2] \cdots [a_{k-1},a_k]  
 \end{equation}
where $a_{i-1},a_i \in \Omega$ for each $1 \leq i \leq k$. A move sequence $[a_0,a_1,\ldots,a_k]$ is called \textit{closed} if $a_0=a_k$.  
 
Suppose that $\infty$ is a point in $\Omega$. In this paper we will primarily study the following three sets for various designs $\De$:
 \begin{align}
 \L(\De) &:=\{[a_0,a_1,\ldots,a_k] \mid a_{i-1}, a_i\in\Omega \mbox{ for } 0 \leq i \leq k.\} \\
 \L_\infty(\De) &:=\{[\infty,a_1,\ldots,a_k] \mid a_{i-1}, a_i\in\Omega \mbox{ for } 1 \leq i \leq k.\} \\
 \pi_\infty(\De) &:=\{[\infty,a_1,\ldots,a_{k-1},\infty] \mid a_{i-1}, a_i\in\Omega \mbox{ for } 1 \leq i \leq k-1.\} 
 \end{align}
Observe that $\pi_\infty(\De)\subseteq \L_\infty(\De)\subseteq \L(\De)$. 

The set $\pi_\infty(\De)$ is called the {\it hole stabilizer}; it is precisely the set of all closed move sequences which start and end at $\infty$. It is an easy exercise to confirm that $\pi_\infty(\De)$ is a group. We recall that $\pi_\infty(\De)$ is generated by elements of the form $[\infty, a, b, \infty]$
for $a,b\in\Omega\backslash\{\infty\}$ \cite[Lemma 3.1]{Puzz} and that if $\infty_1$ and $\infty_2$ are distinct elements of $\Omega$, then $\pi_{\infty_1}(\De) \cong\pi_{\infty_2}(\De)$ \cite[Theorem A]{Puzz}, since the hole stabilizers are conjugate subgroups of $\Sym(n)$.

By way of example, note that if $\De=\mathbb{P}_3$, the projective plane of order $3$, then $\L_\infty(\De)$ is equal to the set $M_{13}$ originally defined by Conway. The group $\pi_{\infty}(\De)$ is, then, a subset of $\Sym(12)$ isomorphic to the Mathieu group $M_{12}$.

\subsection{Conway groupoids}\label{sub:conway}

Let $\De=(\Omega,\B)$ be a supersimple $2-(n,4,\lambda)$ design, as before. 
 The \textit{Conway groupoid} $\C(\De)$ is the small category whose object set is $\Omega$ and such that, for $a,b\in \Omega,$ 
 $$\Mor(a,b) :=\{[a,a_1,\ldots,a_{k-1},b] \mid a_{i-1}, a_i\in\Omega \mbox{ for } 1 \leq i \leq k-1.\} .$$ 
Observe that the set of all morphisms in the category $\C(\De)$ is equal to the set $\L(\De)$. 

Two Conway groupoids $\C(\De_1)$ and $\C(\De_2)$ are {\it isomorphic} if they are isomorphic as categories, i.e. there exist two mutually inverse functors between $\C(\De_1)$ and $\C(\De_2)$. It is easy to check that this condition is equivalent to the condition that $\De_1$ and $\De_2$ contain the same number of points, $n$, and, moreover, that there exists $\phi\in\Sym(n)$ such that 
\[\L(\De_2)=\left(\L(\De_1)\right)^\phi:= \{ \phi^{-1}g\phi \mid g\in \L(\De_1)\}.\]

We return to the design $\De$ and fix a point $\infty$ in $\Omega$. Clearly, for each $a,b\in\Omega$ and each $\sigma \in \Mor(a,b)$, there exist $\rho,\tau \in \L_\infty(\De)$ such that $\sigma= \rho \cdot \tau^{-1}$. In particular the category $\C(\De)$ is completely determined by $\L_\infty(\De)$. 

This straightforward observation underpins our work from here on: rather than studying the category $\C(\De)$ directly, we prefer to study the set $\L_\infty(\De)$. Indeed, in earlier literature on this subject these two objects have been treated as somewhat interchangeable: the label $M_{13}$, for instance, is sometimes used to refer to the set $\L_\infty(\mathbb{P}_3)$, sometimes to the groupoid $\C(\mathbb{P}_3)$. In what follows we will always treat $M_{13}$ as a set and, indeed, we will have no need to study $\C(\De)$ for any design at all.



\subsection{Permutation groups}
\label{sub:permutation}

Let $G$ be a finite group acting on a non-empty set $\Omega$. The action is \emph{transitive} if for any $x, y \in \Omega$ there exists $g \in G$ such that $x^g = y$ and \emph{$t$-transitive} if the induced action on the set of all $t$-tuples of distinct elements of $\Omega$ is transitive for some $t > 0$.

Suppose that the action of $G$ on $\Omega$ is transitive. A \textit{system of imprimitivity} is a partition of $\Omega$ into $\ell$ subsets $\Delta_1,\Delta_2,\ldots,\Delta_\ell$ each of size $k$ such that $1< k,\ell< n$, and so that for all $i\in\{1,\dots,\ell\}$ and all $g\in G$, there exists $j\in\{1,\dots,\ell\}$ such that
\[
 \Delta_i^g=\Delta_j.
\]
The sets $\Delta_i$ are called \textit{blocks}. We say that $G$ acts \textit{imprimitively} if there exists a system of imprimitivity. If no such set exists then $G$ acts \textit{primitively} on $\Omega$.

\subsection{Linear Codes}
Let $C$ be a linear binary code of length $n$, i.e. $C$ is a subspace of the vector space $(\mathbb{F}_2)^n$. Recall that elements of $C$ are called {\it codewords}. 

We define the \emph{binary Hamming graph} $\Gamma=H(n,2)$ to be the finite graph with vertex set $V(\Gamma)=(\mathbb{F}_2)^n$, such that an edge exists between two vertices if and only if they differ in precisely one entry. Observe that $C$ is a subset of the vertex set of $\Gamma$.

For all pairs of vertices $\alpha,\beta\in V(\Gamma)$, the \emph{Hamming distance} between
$\alpha$ and $\beta$, denoted by $d(\alpha,\beta)$, is defined to be
the number of entries in which the two vertices differ.  We let
$\Gamma_k(\alpha)$ denote the set of vertices in $H(n,2)$ that are at
distance $k$ from $\alpha$.

We are now able to define the \emph{minimum distance, $d$,
  of C} to be the smallest distance between distinct codewords of $C$.
For any $\gamma\in V(\Gamma)$, we define 
$$d(\gamma,C)=\min\{d(\gamma,\beta)\,:\,\beta\in C\}$$ to be the 
\emph{distance of $\gamma$ from $C$}.  The \emph{covering radius of $C$}, 
which we denote by $\rho$, is the maximum distance that any vertex in $H(n,2)$ is from $C$.  
We let $C_i$ denote the set of vertices that are at distance $i$ from $C$; 
then $C_0=C$ and $\{C,C_1,\ldots,C_\rho\}$ forms a
partition of $V(\Gamma)$ called the \emph{distance partition of $C$}. For each $i$, the set $C_i$ is a union of cosets of $C$, and we say that a coset that is a subset of $C_i$ is {\it of weight $i$}.

The automorphism group of $\Gamma$, 
$\Aut(\Gamma)$, is the semi-direct product
$B\rtimes L$ where $B\cong \Sym(2)^n$ and $L\cong \Sym(n)$, see \cite[Theorem 9.2.1]{distreg}.    Let
$g=(g_1,\ldots, g_n)\in B$, $\sigma\in L$ and
$\alpha=(\alpha_1,\ldots,\alpha_n)\in V(\Gamma)$. Then $g$ and $\sigma$
act on $\alpha$ in the following way: \begin{equation}\label{autgpaction}
\alpha^g=(\alpha_1^{g_1},\ldots,\alpha_n^{g_n}),\,\,\,\,\,\,\,\alpha^\sigma=(\alpha_{1\sigma^{-1}},\ldots,\alpha_{n\sigma^{-1}}). \end{equation}

The \emph{automorphism group of $C$}, denoted by $\Aut(C)$, is the setwise stabiliser of $C$ in $\Aut(\Gamma)$.
In this paper, we construct a family of codes with the following symmetrical property.  

\begin{Def}\label{d:ctrans} Let $C$ be a code with distance partition
  $\{C=C_0, C_1,\ldots,C_\rho\}$.  We say $C$ is \emph{$X$-completely transitive}, or simply \emph{completely transitive}, if there exists $X\leq \Aut(\Gamma)$ such that
    $C_i$ is an $X$-orbit for $i=0,\ldots,\rho$.
\end{Def}

\noindent It is known that completely transitive codes are necessarily completely regular \cite{giudici}.

\begin{Def}\label{d:creg} A binary code $C$ with covering radius $\rho$ is \emph{completely regular} if for all $i\geq 0$, 
every vector $\alpha\in C_i$ has the same number $c_i$ of neighbours in $C_{i-1}$
and the same number $b_i$ of neighbours in $C_{i+1}$; note that $c_0=b_\rho=0$. For such a code, define $(b_0,\ldots,b_{\rho-1};c_1,\ldots,c_\rho)$
to be the \emph{intersection array} of $C$.
\end{Def}

Recall that the \textit{dimension} of $C$ is the dimension of $C$ regarded as a vector space over $\mathbb{F}_2$. We say that $C$ is an $[n,k,d]$ code if it has minimum distance $d$ and dimension $k$. We will need the following result from \cite{lifting}.

\begin{Lem}\label{l:rifa} Let $C$ be a linear completely regular $[n,k,d]$ code
with covering radius $\rho$ and intersection array $(b_0,\ldots,b_{\rho-1};c_1,\ldots,c_\rho)$. Let $\mu_i$ denote the number
of cosets of $C$ of weight $i$, where $i=0,\ldots,\rho$. Then the following equality holds:$$b_i\mu_i=c_{i+1}\mu_{i+1},\,\,i=0,\ldots,\rho-1.$$
\end{Lem}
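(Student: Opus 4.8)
The plan is to prove each identity $b_i\mu_i = c_{i+1}\mu_{i+1}$ by a double count of the edges of the Hamming graph $\Gamma = H(n,2)$ that run between the consecutive cells $C_i$ and $C_{i+1}$ of the distance partition of $C$.

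First I would record two elementary facts. Since $C$ is linear, each cell $C_i$ is a disjoint union of cosets of $C$ (this is noted in the discussion preceding the lemma), and by definition exactly $\mu_i$ of these cosets have weight $i$; each coset has size $|C| = 2^k$, so $|C_i| = \mu_i 2^k$. Second, by complete regularity every vertex of $C_i$ has exactly $b_i$ neighbours in $C_{i+1}$ and every vertex of $C_{i+1}$ has exactly $c_{i+1}$ neighbours in $C_i$; here one also uses that an edge of $\Gamma$ can only join vertices lying in the same or in adjacent cells of the distance partition, which is immediate from the inequality $d(\alpha,C)-1 \le d(\beta,C) \le d(\alpha,C)+1$ valid whenever $\alpha,\beta$ are adjacent in $\Gamma$.

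Now fix $i \in \{0,\ldots,\rho-1\}$ and let $E_i$ denote the set of edges of $\Gamma$ with one endpoint in $C_i$ and the other in $C_{i+1}$. Counting $E_i$ by summing over its endpoints in $C_i$ gives $|E_i| = |C_i|\, b_i$, while counting by summing over its endpoints in $C_{i+1}$ gives $|E_i| = |C_{i+1}|\, c_{i+1}$. Equating the two expressions and substituting $|C_i| = \mu_i 2^k$ and $|C_{i+1}| = \mu_{i+1} 2^k$, the common factor $2^k$ cancels and we obtain $b_i \mu_i = c_{i+1}\mu_{i+1}$, as required.

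There is no serious obstacle: the argument is a single edge-count, and the only points needing (minor) care are the two preliminary observations above — that $C_i$ is a union of cosets (i.e.\ linearity of $C$) and that $\Gamma$-edges only join vertices of the same or neighbouring cells of the distance partition. Both are standard and, in the first case, already recorded in the text, so the proof is essentially routine.
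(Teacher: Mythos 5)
Your argument is correct, but note that the paper does not prove this lemma at all: it is quoted directly from \cite{lifting}, so there is no in-text proof to compare against. Your edge-count between consecutive cells of the distance partition is the standard argument and it is complete. The two supporting facts are exactly the ones you isolate: since $C$ is linear, every vector of a coset $v+C$ has the same distance to $C$ (because $d(v+c,C)=\min_{c'}\mathrm{wt}(v+c-c')=d(v,C)$), so each coset lies entirely in one cell and $|C_i|=\mu_i|C|=\mu_i2^k$; and complete regularity gives the constant degrees $b_i$ (from $C_i$ into $C_{i+1}$) and $c_{i+1}$ (from $C_{i+1}$ into $C_i$). Counting the edges of $H(n,2)$ between $C_i$ and $C_{i+1}$ from each side then gives $\mu_i2^kb_i=\mu_{i+1}2^kc_{i+1}$, hence $b_i\mu_i=c_{i+1}\mu_{i+1}$. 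One minor remark: the observation that Hamming-graph edges only join vertices in the same or adjacent cells, while true, is not actually needed for the count, since the quantities $b_i$ and $c_{i+1}$ already refer precisely to the neighbours in the relevant adjacent cell.
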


\section{The actions of \texorpdfstring{$\Sp_{2m}(2)$ and $2^{2m}.\Sp_{2m}(2)$}{Sp(2m,2) and 2^2m.Sp(2m,2)} on quadratic forms}\label{s:action}
The notation and terminology in this section will be based on that found in \cite[Section 7.7]{Permutation}. We start with the standard construction for the action of $\Sp_{2m}(2)$ on quadratic forms. Let $m \geq 1$ be an integer and $V:=\mathbb{F}_2^{2m}$ be a vector space equipped with the standard basis and consider the block matrices \[e=\begin{pmatrix}0_m&I_m\\0_m&0_m\end{pmatrix}, \qquad
f=\begin{pmatrix}0_m&I_m\\I_m&0_m\end{pmatrix}=e+e^T.\]

We write elements of $V$ as row matrices and, therefore, define $\varphi(u,v)$ to be the alternating bilinear form given by $\varphi(u,v):=ufv^T$. We let $\Omega$ be the set of all quadratic forms $\theta(u)$ with the property that \[ \varphi(u,v)=\theta(u+v)+\theta(u)+\theta(v), \]
i.e. $\Omega$ is the set of quadratic forms whose polarisation is equal to $\varphi$. Now we define $\theta_0(u):=ueu^T \in \Omega$, and by results in \cite[Section 7.7]{Permutation}, any other element of $\Omega$ is of the form \[ \theta_a(u) := \theta_0(u)+\varphi(u,a), \]
where $a$ is a fixed element of $V$.

Recall that $\Sp_{2m}(2):=\{A \in \textrm{GL}_{2m}(2) \mid AfA^T=f \}$ acts on $\Omega$ (on the right) via $\theta^x(u):=\theta(ux^{-1})$ for each $\theta \in \Omega$ and $x \in \Sp_{2m}(2)$.
Recall (\cite[Corollary 7.7A]{Permutation}) that the action of $\Sp_{2m}(2)$ on $\Omega$ splits into two distinct orbits \[  \Omega^0:=\{\theta_a \mbox{ } | \mbox{ } a \in V^0\},\qquad \Omega^1:=\{\theta_a \mbox{ }|\mbox{ } a \in V^1\}\] 
where
\[  V^0:=\{a \in V \mbox{ }|\mbox{ } \theta_0(a)=0\},\qquad V^1:=\{a \in V \mbox{ }|\mbox{ } \theta_0(a)=1\}.\]

Given the form $\varphi$ and an element $c\in V$, we define the \textit{transvection} $t_c$ as follows:
$$u^{t_c}:=u + \varphi(u,c)c, \mbox{ for all } u,c \in V.$$ 
Recall that the set of all transvections generates $\Sp_{2m}(2)$ (see, for instance, \cite[Theorem 8.5]{Ta}). The following result is \cite[Lemma 7.7A]{Permutation}.

\begin{Lem}\label{l:trans}
The following hold:
\begin{itemize}
\item[(i)] For all $a,c \in V$, $$\theta_a^{t_c}=\left\{
	\begin{array}{ll}
		\theta_a,  & \mbox{if } \theta_a(c)=1; \\
		\theta_{a+c}, &  \mbox{if } \theta_a(c)=0 
	\end{array}
\right.$$

\item[(ii)] For each $a,b \in V$ there is at most one $c \in V$ such that $t_c$ maps $\theta_a$ onto $\theta_b$. Such a $c$ exists if and only if $\theta_0(a)=\theta_0(b)$ (and then $c=a+b$).
\end{itemize}
\end{Lem}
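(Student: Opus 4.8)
The plan is to compute directly, from the definitions, the action of a single transvection $t_c$ on a quadratic form $\theta_a\in\Omega$, and then to read both statements off the resulting formula.

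First I would record that $t_c$ is an involution: since $\varphi$ is alternating we have $\varphi(c,c)=0$, whence $(u^{t_c})^{t_c}=u^{t_c}+\varphi(u^{t_c},c)c=u+\varphi(u,c)c+\varphi(u,c)c=u$. Thus $t_c^{-1}=t_c$, so by the definition of the action on $\Omega$,
\[
\theta_a^{t_c}(u)=\theta_a(u^{t_c})=\theta_a\bigl(u+\varphi(u,c)c\bigr).
\]
Since $\varphi(u,c)\in\mathbb{F}_2$ I would split into cases. If $\varphi(u,c)=0$ the argument is unchanged and $\theta_a^{t_c}(u)=\theta_a(u)$. If $\varphi(u,c)=1$, applying the polarisation identity $\theta_a(x+y)=\theta_a(x)+\theta_a(y)+\varphi(x,y)$ (valid because $\theta_a\in\Omega$) with $x=u$, $y=c$ gives $\theta_a(u+c)=\theta_a(u)+\theta_a(c)+1$. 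The two cases combine into the single formula
\[
\theta_a^{t_c}(u)=\theta_a(u)+\varphi(u,c)\,\bigl(\theta_a(c)+1\bigr)\qquad\text{for all }u\in V.
\]
Statement (i) is now immediate: if $\theta_a(c)=1$ then $\theta_a(c)+1=0$ in $\mathbb{F}_2$, so the correction term vanishes identically and $\theta_a^{t_c}=\theta_a$; if $\theta_a(c)=0$ the correction term equals $\varphi(u,c)$, and since $\varphi(u,c)=\varphi(u,a+c)+\varphi(u,a)=\theta_{a+c}(u)+\theta_a(u)$ (by bilinearity and the identity $\theta_b(u)=\theta_0(u)+\varphi(u,b)$) we get $\theta_a^{t_c}(u)=\theta_{a+c}(u)$, i.e. $\theta_a^{t_c}=\theta_{a+c}$.

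For (ii) I would first note that $b\mapsto\theta_b$ is a bijection $V\to\Omega$: indeed $\theta_a=\theta_b$ forces $\varphi(u,a+b)=0$ for every $u$, hence $a+b=0$ since $\varphi$ is non-degenerate. Now suppose $t_c$ maps $\theta_a$ to $\theta_b$ with $\theta_a\neq\theta_b$. By (i) the first alternative would fix $\theta_a$, so we must be in the second: $\theta_a(c)=0$ and $\theta_{a+c}=\theta_b$, and injectivity of $b\mapsto\theta_b$ forces $c=a+b$, giving uniqueness. Conversely, for $c=a+b$ the constraint $\theta_a(c)=0$ needed to be in the second alternative becomes $\theta_a(a+b)=0$; expanding with the polarisation identity and the fact that $\varphi(a+b,a)=\varphi(b,a)=\varphi(a,b)$ yields $\theta_a(a+b)=\theta_0(a)+\theta_0(b)$. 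Hence $t_{a+b}$ sends $\theta_a$ to $\theta_b$ precisely when $\theta_0(a)=\theta_0(b)$, which is the claimed criterion.

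This lemma presents no genuine obstacle: the whole argument is a careful unwinding of the definitions over $\mathbb{F}_2$. The only points that reward attention are the case-split on the value of $\varphi(u,c)$ used to obtain the uniform formula for $\theta_a^{t_c}$, and the degenerate case $\theta_a=\theta_b$ of (ii), where one should read the uniqueness assertion as concerning transvections acting through the non-fixing alternative of (i) — every $t_c$ with $\theta_a(c)=1$ also fixes $\theta_a$, so the literal "at most one $c$" holds once $\theta_a\neq\theta_b$ is assumed, with $c=a+b$.
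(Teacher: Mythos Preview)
Your proof is correct. The paper does not actually prove this lemma; it simply cites it as \cite[Lemma 7.7A]{Permutation} (Dixon--Mortimer). Your direct computation from the definitions is essentially the standard argument found there, and your observation about the degenerate case $a=b$ in part (ii) is a valid caveat that the cited statement glosses over.
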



As an immediate consequence, we obtain:

\begin{Lem}\label{l:evensum}
Let $\epsilon \in \mathbb{F}_2$ and $\{v_1,\ldots,v_k\}$ a subset of $V^\epsilon$ for some odd integer $k > 0$. Then, for each $g \in \Sp_{2m}(2)$, we have 
\begin{equation} \sum_{i=1}^k (\theta_{v_i})^g = \left(\theta_{\sum_{i=1}^k v_i} \right)^g.
\end{equation}
\end{Lem}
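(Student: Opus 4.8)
The plan is to reduce the statement to the case $g=1$ together with the elementary fact that the $\Sp_{2m}(2)$-action on functions is $\mathbb{F}_2$-linear. First I would record the identity-element version: for \emph{any} $v_1,\dots,v_k\in V$ with $k$ odd, $\sum_{i=1}^k\theta_{v_i}=\theta_{v_1+\cdots+v_k}$ as $\mathbb{F}_2$-valued functions on $V$. This is immediate from $\theta_a(u)=\theta_0(u)+\varphi(u,a)$: summing over $i$ gives $k\,\theta_0(u)+\varphi\!\left(u,\sum_{i=1}^k v_i\right)$, and since $k$ is odd and $\varphi$ is bilinear this equals $\theta_0(u)+\varphi\!\left(u,\sum_{i=1}^k v_i\right)=\theta_{\sum_i v_i}(u)$. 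In particular both sides are quadratic forms with polarisation $k\varphi=\varphi$, hence genuinely lie in $\Omega$; note also that this step uses only the oddness of $k$, not the hypothesis $v_i\in V^\epsilon$.

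Next I would observe that for each fixed $g\in\Sp_{2m}(2)$ the map $\theta\mapsto\theta^g$ on the $\mathbb{F}_2$-vector space of all functions $V\to\mathbb{F}_2$ is $\mathbb{F}_2$-linear, since $\theta^g$ is just the precomposition of $\theta$ with the linear map $u\mapsto ug^{-1}$; explicitly $(\theta+\theta')^g(u)=(\theta+\theta')(ug^{-1})=\theta^g(u)+(\theta')^g(u)$, so $(\theta+\theta')^g=\theta^g+(\theta')^g$ and, by induction, $\big(\sum_i\theta_i\big)^g=\sum_i\theta_i^g$ for any finite family. Combining the two observations yields
\[
\sum_{i=1}^k(\theta_{v_i})^g=\Big(\sum_{i=1}^k\theta_{v_i}\Big)^g=\big(\theta_{v_1+\cdots+v_k}\big)^g,
\]
which is precisely the claim.

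I do not expect any real obstacle here: the only care needed is the parity bookkeeping of $k$ in the first step, and the remark that the sum of the $\theta_{v_i}$, formed in the space of all functions, lies again in $\Omega$ (this is exactly where oddness of $k$ enters). For a proof closer in spirit to ``immediate consequence of Lemma~\ref{l:trans}'', one can instead argue that the set of $g$ for which the conclusion holds is a subgroup of $\Sp_{2m}(2)$ and, since transvections generate $\Sp_{2m}(2)$, check the case $g=t_c$ directly from Lemma~\ref{l:trans}(i): writing $S=\{i:\theta_{v_i}(c)=0\}$, the forms with $i\in S$ each get shifted by the linear functional $u\mapsto\varphi(u,c)$ while the others are fixed, and the resulting total shift matches that of $\theta_{v_1+\cdots+v_k}$ because $|S|\equiv 1+\theta_{v_1+\cdots+v_k}(c)\pmod 2$ (using $\sum_i\theta_{v_i}(c)=\theta_{v_1+\cdots+v_k}(c)$ and $k$ odd). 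I would write up the first, shorter argument.
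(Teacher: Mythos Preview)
Your proof is correct, and your primary argument is in fact cleaner than the paper's. Both you and the paper handle the case $g=1$ identically, by expanding $\theta_{v_i}(u)=\theta_0(u)+\varphi(u,v_i)$ and using that $k$ is odd. For general $g$, however, the paper proceeds exactly along the lines of your \emph{alternative} sketch: it reduces to the case $g=t_c$ (since transvections generate $\Sp_{2m}(2)$) and then carries out an explicit four-line computation using Lemma~\ref{l:trans}(i), tracking which $\theta_{v_i}$ get shifted by $c$. Your observation that $\theta\mapsto\theta^g$ is simply precomposition by the linear map $u\mapsto ug^{-1}$, hence $\mathbb{F}_2$-linear on the whole function space, bypasses this entirely and gives the general case in one line from the $g=1$ case. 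This is a genuine simplification: the transvection computation in the paper is correct but unnecessary, and your route makes transparent why the identity is preserved under the action. You are also right that the hypothesis $v_i\in V^\epsilon$ plays no role; only the parity of $k$ matters.
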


\begin{proof}
We begin by considering the case where $g=1$. Since $k$ is odd,
\[\sum_{i=1}^k \theta_{v_i}(u)=\sum_{i=1}^k \theta_0(u) + \sum_{i=1}^k \varphi(u,v_i)= \theta_0(u) + \varphi(u,\sum_{i=1}^k v_i) = \theta_{\sum_{i=1}^k v_i}(u).\]
We now turn to the general case. Since the transvections generate $\Sp_{2m}(2)$, it suffices to consider the case $g=t_c$ for some $c \in V.$ We calculate,  
\begin{align*}\sum_{i=1}^k \theta_{v_i}(u)^{t_c}&=\sum_{i=1}^k \theta_{v_i+(1+\theta_{v_i}(c))c}(u) \\
&=\sum_{i=1}^k \theta_0(u)+\varphi(u,v_i+(1+\theta_{v_i}(c))c)  \\
&=\theta_0(u)+\varphi(u,\sum_{i=1}^k v_i+c+c\sum_{i=1}^k \theta_{v_i}(c)) \\
&= \theta_0(u)+\varphi(u,\sum_{i=1}^k v_i+(1+\theta_{\sum_{i=1}^k v_i}(c))c)  =  \left(\theta_{\sum_{i=1}^k v_i} \right)^{t_c}(u).
\end{align*}
\end{proof}

We now show how to decompose elements of $V$ into a sum of elements in $V^\ep$, which will prove useful in the sequel.

\begin{Lem}\label{l:sumof2}
For each $v \in V$ and $\epsilon \in \mathbb{F}_2$ there exist distinct $x,y \in V^\epsilon$ such that $v=x+y$.
\end{Lem}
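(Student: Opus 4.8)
The plan is to argue by counting dimensions, distinguishing the two cases $\epsilon = 0$ and $\epsilon = 1$, and using the fact that $\theta_0$ is a nondegenerate quadratic form on $V = \mathbb{F}_2^{2m}$ (so its zero set $V^0$ has $f_0(m) = 2^{m-1}(2^m+1)$ elements and $V^1$ has $f_1(m) = 2^{m-1}(2^m-1)$ elements, both of which are positive and in fact at least $2$ for $m \geq 1$). Fix $v \in V$. I want to exhibit distinct $x, y \in V^\epsilon$ with $x + y = v$. Equivalently, setting $y = v + x$, I need an $x \in V^\epsilon$ with $v + x \in V^\epsilon$ and $x \neq v + x$, i.e. $x \neq v$ when $v \neq 0$ (and when $v = 0$ I just need two distinct elements of $V^\epsilon$, which exist).

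The key computation is to expand $\theta_0(v+x) = \theta_0(v) + \theta_0(x) + \varphi(v,x)$. So the condition "$x \in V^\epsilon$ and $v+x \in V^\epsilon$" becomes the pair of linear-ish conditions $\theta_0(x) = \epsilon$ and $\theta_0(v) + \epsilon + \varphi(v,x) = \epsilon$, i.e. $\varphi(v,x) = \theta_0(v)$. Thus I am looking for $x$ lying in the affine subspace (or coset) $\{x : \varphi(v,x) = \theta_0(v)\}$ — which is a hyperplane $H$ of $V$ when $v \neq 0$, of size $2^{2m-1}$ — intersected with the quadric $V^\epsilon$. So the crux is: the restriction of $\theta_0$ to the hyperplane $H$ still takes the value $\epsilon$ at enough points (at least two, and at least one point other than $v$). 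This reduces to a standard fact about quadratic forms over $\mathbb{F}_2$ restricted to a hyperplane: $\theta_0|_H$ is either a nondegenerate form on a $(2m-1)$-space (rank $2m-2$, a plus- or minus-type form plus possibly a linear part) or has a one-dimensional radical; in every case, for $m \geq 2$ the number of solutions to $\theta_0|_H(x) = \epsilon$ is a positive number bounded below by something like $2^{2m-2} - 2^{m-1}$, comfortably at least $2$.

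I expect the main obstacle to be handling small cases and the bookkeeping for the quadric-hyperplane intersection cleanly, rather than any conceptual difficulty. Concretely, after reducing to "$\theta_0|_H$ represents $\epsilon$ at least twice, and at a point $\neq v$ when $v \in H$", I would either (i) invoke the classification of $\mathbb{F}_2$-quadratic forms on a subspace and the known formulas for the number of zeros/non-zeros, or (ii) give a direct argument: note $v \in H$ (since $\varphi(v,v) = 0 = \theta_0(v)$ forces $\theta_0(v) = 0$ only when... — careful here: actually $v \in H$ iff $\varphi(v,v) = \theta_0(v)$ iff $0 = \theta_0(v)$, so $v \in H$ precisely when $\theta_0(v) = 0$), then pick any $w \notin v^\perp$ (exists since $\varphi$ is nondegenerate), and adjust $x$ along the line through a fixed solution in direction $w$ or $v$ to flip the value of $\theta_0$ and thereby land on whichever of $V^0$, $V^1$ is required while avoiding $x = v$. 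Since $m \geq 1$ and the relevant subspaces have dimension growing with $m$, there is ample room; the case $m = 1$ (if it needs to be covered, though the lemma as used has $m \geq 1$) can be checked by hand on the $4$-element space. I would write the clean version using the zero-count formulas, as that makes the "$\geq 2$" conclusion immediate and uniform.
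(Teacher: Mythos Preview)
Your approach is correct (for $v \neq 0$ and $m \geq 2$) but takes a genuinely different route from the paper. The paper's proof is a bare-hands explicit construction: it splits into four cases according to the pair $(\theta_0(v),\epsilon)$, and in each case writes down a specific $x$ built out of standard basis vectors (e.g.\ $x=0$, or $x=e_i+e_{i+m}$, or $x=e_i+e_{i+m}+e_j$ for a suitably chosen coordinate $j$), then verifies by inspection that $x$ and $y=x+v$ both lie in $V^\epsilon$. Your argument instead reformulates the problem as showing that the affine hyperplane $H=\{x:\varphi(v,x)=\theta_0(v)\}$ meets the quadric $V^\epsilon$ nontrivially, and settles this by a count. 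This works cleanly: a short character-sum (or the standard quadric--hyperplane formulas you allude to) gives
\[
|H\cap V^\epsilon| \;=\; 2^{2m-2}+(-1)^\epsilon\,2^{m-1},
\]
independently of $\theta_0(v)$, and this is positive precisely for $m\geq 2$. Since $v\neq 0$ forces $x\neq x+v$, a single point of $H\cap V^\epsilon$ suffices; your worry about avoiding $x=v$ is unnecessary. The trade-off: the paper's argument is completely elementary and constructive, while yours is more uniform and conceptual and avoids the somewhat fiddly case analysis.

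One slip to fix: your treatment of $v=0$ is wrong. Two \emph{distinct} elements of $V^\epsilon$ can never sum to $0$ in characteristic $2$, so the lemma as literally stated fails at $v=0$. (The paper's proof has the same gap: its case (a) produces $x=y=0$ when $v=0$.) This is harmless for the applications, which only ever invoke the lemma with $v\neq 0$, but you should say so explicitly rather than claim the $v=0$ case is handled.
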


\begin{proof}
We prove this in a series of cases. In each case, let $e_i$ denote the $i$'th basis vector for $1 \leq i \leq 2m$. Let $y=x+v$ and $\delta:=\theta_0(v)$. 
\begin{itemize}
\item[(a)] If $\delta=0$, $\epsilon=0$ let $x:=0$.
\item[(b)] If $\delta=0$, $\epsilon=1$ then
\begin{itemize}
\item[(i)] if $v_i=v_{i+m}$ for some $1 \leq i \leq m$, let $x:=e_i+e_{i+m}$; 
\item[(ii)] if $v_i \neq v_{i+m}$ for all $1 \leq i \leq m$, fix any $i$, let $j$ be such that either $v_{j-m}=1$ or $v_{j+m}=1$ and let $x:=e_i+e_{i+m}+e_j$.
\end{itemize}
\item[(c)] If $\delta=1$, $\epsilon=0$ let $1 \leq i \leq m$ be such that $v_i=v_{i+m}=1$ and let $x:=e_i$.
\item[(d)] If $\delta=1$, $\epsilon=1$ let $1 \leq i \leq m$ be such that $v_i=v_{i+m}=1$. Then
\begin{itemize}
\item[(i)] if $v_j=v_{j+m}$ for some $1 \leq j \leq m$ with $j \neq i$, let $x:=e_j+e_{j+m}+e_i$;
\item[(ii)] if $v_j \neq v_{j+m}$ for all $1 \leq j \leq m$ with $j \neq i$, let $j$ be such that $v_{j-m}=1$ or $v_{j+m}=1$ and let $x:=e_i+e_{i+m}+e_j$.
\end{itemize}
\end{itemize}
\end{proof}

\begin{Cor}\label{c:sumof2} Let $v\in V^{\ep}$. Then $v$ can be written as the sum of three distinct elements of $V^{1-\ep}$.
\end{Cor}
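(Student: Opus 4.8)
The plan is to deduce this almost immediately from Lemma~\ref{l:sumof2}, reducing the problem of writing $v$ as a sum of three elements of $V^{1-\ep}$ to the problem of writing a single related vector as a sum of two. Fix $v\in V^\ep$. First I would note that $V^{1-\ep}$ is non-empty: if $1-\ep=0$ then $0\in V^{1-\ep}$, and if $1-\ep=1$ then $e_1+e_{1+m}\in V^{1-\ep}$, since $\theta_0(e_1+e_{1+m})=1$ (recall $\theta_0(u)=ueu^T=\sum_{i=1}^m u_iu_{i+m}$). So I may fix some $w_1\in V^{1-\ep}$.

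Next I would set $u:=v+w_1$ and apply Lemma~\ref{l:sumof2} to the vector $u$ with the scalar $1-\ep$, obtaining distinct $w_2,w_3\in V^{1-\ep}$ with $u=w_2+w_3$. Rearranging gives $v=u+w_1=w_1+w_2+w_3$, which exhibits $v$ as a sum of three elements of $V^{1-\ep}$, as wanted.

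Finally I would check pairwise distinctness of $w_1,w_2,w_3$: we have $w_2\neq w_3$ by construction, while if $w_1=w_2$ then $v=w_1+w_2+w_3=w_3\in V^{1-\ep}$, and if $w_1=w_3$ then likewise $v=w_2\in V^{1-\ep}$; either possibility contradicts $v\in V^\ep$, because $V^0$ and $V^1$ are disjoint and $\ep\neq 1-\ep$ in $\mathbb{F}_2$.

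There is essentially no obstacle in this argument; the only two points needing a moment's care are the non-emptiness of $V^{1-\ep}$ (dispatched with an explicit vector) and the pairwise distinctness of the three summands (dispatched using the disjointness of the two $\Sp_{2m}(2)$-orbits $V^0$ and $V^1$).
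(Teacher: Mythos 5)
Your proof is correct and takes essentially the same route as the paper: both reduce the statement to Lemma~\ref{l:sumof2} and obtain pairwise distinctness of the three summands from the disjointness of $V^0$ and $V^1$. The paper simply applies Lemma~\ref{l:sumof2} twice (first to $v$, then to one of the resulting summands), which also spares it the explicit non-emptiness check that your initial choice of $w_1$ requires.
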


\begin{proof} By Lemma \ref{l:sumof2}, $v=x+y$ for some $x,y\in V^{1-\ep}$.
Again, by Lemma \ref{l:sumof2}, $y=y_1+y_2$ for some $y_1,y_2\in V^{1-\ep}$ and so $v=x+y_1+y_2$.  Now if any of $x,y_1,y_2$ are equal, then $v\in V^{1-\ep}$, which is a contradiction.  
\end{proof}

\subsection{The action of \texorpdfstring{$\Sp_{2m}(2)$}{Sp(2m,2)} on \texorpdfstring{$3$-subsets}{3-subsets}}
In \cite[Theorem 7.7A]{Permutation}, the authors deduce that $\Sp_{2m}(2)$ acts 2-transitively on $\Omega^\epsilon$ for $\epsilon \in \mathbb{F}_2$. In fact, more is true:

\begin{Thm}\label{t:3sets}
Let $\epsilon, \delta \in \mathbb{F}_2$ and $m \geq 3$. The action of $\Sp_{2m}(2)$ on $3$-subsets of elements in $\Omega^\epsilon$ splits into two orbits, $\O_0^\epsilon$ and $\O_1^\epsilon$, defined as follows: 
\[ \O_\delta^\epsilon:=\Big\{\{\theta_{v_1}, \theta_{v_2}, \theta_{v_3}\} \mid v_j \in V^\epsilon, \theta_0(v_1+v_2+v_3)=\delta\Big\}. \]
Furthermore, for each $v \in V^\delta$, the sets 
\[\Delta^\epsilon_v:=\left\{\{\theta_{v_1}, \theta_{v_2}, \theta_{v_3}\} \in \O_\delta^\epsilon \mid \sum_{i=1}^3 v_i = v \right\}
\]
 form blocks of imprimitivity for the action  of $\Sp_{2m}(2)$ on $\O_\delta^\epsilon$.
\end{Thm}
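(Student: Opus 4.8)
The plan is to prove the three assertions in order: first that $\O_0^\epsilon$ and $\O_1^\epsilon$ are indeed $\Sp_{2m}(2)$-orbits, second that they are distinct (and exhaust the $3$-subsets of $\Omega^\epsilon$), and third that the sets $\Delta^\epsilon_v$ form a system of imprimitivity on each $\O_\delta^\epsilon$. For the first part I would argue that $\Sp_{2m}(2)$ acts transitively on $\O_\delta^\epsilon$ for each fixed $\delta$. Given two triples $\{\theta_{v_1},\theta_{v_2},\theta_{v_3}\}$ and $\{\theta_{w_1},\theta_{w_2},\theta_{w_3}\}$ in $\O_\delta^\epsilon$, use the $2$-transitivity of $\Sp_{2m}(2)$ on $\Omega^\epsilon$ (\cite[Theorem 7.7A]{Permutation}) to move $\{\theta_{v_1},\theta_{v_2}\}$ onto $\{\theta_{w_1},\theta_{w_2}\}$; this reduces matters to showing that the stabilizer of an ordered pair $(\theta_{v_1},\theta_{v_2})$ acts transitively on the possible third points $\theta_{v_3}$ that complete it to a triple with $\theta_0(v_1+v_2+v_3)=\delta$ (a fixed value, since by Lemma~\ref{l:evensum} the quantity $\theta_0(v_1+v_2+v_3)$ is a $\Sp_{2m}(2)$-invariant of the triple — note $k=3$ is odd, so $\theta_{v_1+v_2+v_3}=\theta_{v_1}+\theta_{v_2}+\theta_{v_3}$ and the invariance of $\theta_0(v_1+v_2+v_3)$ follows). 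This last transitivity statement is a concrete computation inside the stabilizer $\Stab(\theta_{v_1})\cap\Stab(\theta_{v_2})$, which one can normalise (replacing $v_1$ by $0$, so $\theta_{v_1}=\theta_0$) and then analyse using the transvections $t_c$ of Lemma~\ref{l:trans}; the condition $m\geq 3$ will be needed here to guarantee enough room.

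For the second part — that there really are two orbits and no more — observe that the partition of the $3$-subsets of $\Omega^\epsilon$ into $\O_0^\epsilon$ and $\O_1^\epsilon$ according to the value $\theta_0(v_1+v_2+v_3)\in\mathbb{F}_2$ is exhaustive and disjoint by construction, so once each piece is shown to be a single orbit the count of two is automatic; one should just check both pieces are nonempty, which follows from Lemma~\ref{l:sumof2} and Corollary~\ref{c:sumof2} (these let us realise any prescribed value of $v_1+v_2+v_3$ as a sum of three distinct elements of $V^\epsilon$).

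For the third part, I would first check that $\{\Delta^\epsilon_v \mid v\in V^\delta\}$ is a genuine partition of $\O_\delta^\epsilon$: every triple in $\O_\delta^\epsilon$ lies in exactly one $\Delta^\epsilon_v$, namely for $v=v_1+v_2+v_3$, and this $v$ does lie in $V^\delta$ by definition of $\O_\delta^\epsilon$; nonemptiness of each block again comes from Corollary~\ref{c:sumof2}. Then invariance under $\Sp_{2m}(2)$ is exactly Lemma~\ref{l:evensum}: if $g\in\Sp_{2m}(2)$ sends $\{\theta_{v_1},\theta_{v_2},\theta_{v_3}\}$ to $\{\theta_{w_1},\theta_{w_2},\theta_{w_3}\}$ then $w_1+w_2+w_3 = $ (the vector indexing) $\theta_{v_1+v_2+v_3}^g = \theta_{w}$ for a well-defined $w$, so $(\Delta^\epsilon_v)^g = \Delta^\epsilon_w$. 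Finally, to see this is a bona fide system of imprimitivity one must verify $1 < |\Delta^\epsilon_v| < |\O_\delta^\epsilon|$ and that there is more than one block: there are $|V^\delta| = f_\delta(m) > 1$ blocks, and each block has size strictly between $1$ and the total, a cardinality estimate which is easy for $m\geq 3$.

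The main obstacle is the core transitivity step in part one: showing that the two-point stabilizer acts transitively on valid third points. Everything else is either definitional bookkeeping or a direct appeal to Lemmas~\ref{l:trans} and \ref{l:evensum}, but this step genuinely requires an orbit-counting or explicit-transvection argument inside $\Sp_{2m}(2)$, and it is where the hypothesis $m\geq 3$ is essential (for small $m$ the orbit structure on $3$-subsets degenerates).
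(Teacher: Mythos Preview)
Your outline is correct and follows essentially the same route as the paper: invariance of each $\O_\delta^\epsilon$ via Lemma~\ref{l:evensum}, reduction by $2$-transitivity to a two-point-stabilizer computation, an explicit transvection argument for that step, and then Lemma~\ref{l:evensum} again for the block system. The paper carries out your ``main obstacle'' by exhibiting, for $\theta_a,\theta_b,\theta_c,\theta_d\in\Omega^\epsilon$ with $\theta_0(a+c+d)=\theta_0(b+c+d)$, an element $t_{a+w}\cdot t_{b+w}$ fixing $\theta_c,\theta_d$ and sending $\theta_a\mapsto\theta_b$; the existence of a suitable $w\in V^\epsilon$ comes from an auxiliary lemma (Lemma~\ref{l:linind}) asserting that $\theta_0$ is non-constant on any triple intersection $L(a,\epsilon_1)\cap L(b,\epsilon_2)\cap L(c,\epsilon_3)$ with $a,b,c$ linearly independent. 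One point you should anticipate: that auxiliary lemma genuinely needs $m\geq 4$, so the paper handles $m=3$ separately by a GAP computation rather than by the general argument---your remark that ``$m\geq 3$ will be needed to guarantee enough room'' is slightly optimistic about where the threshold falls.
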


We will prove Theorem~\ref{t:3sets} shortly. In order to do so we need a definition from \cite{Permutation}: Let $a\in V, \epsilon \in \mathbb{F}_2$ and set $$L(a,\epsilon):=\{v \in V \mid \varphi(v,a)=\epsilon\}.$$ 
Observe that $L(a,0)$ is a subspace of $V$ for all $a\in V$.
Before the proof of Lemma 7.7B in \cite{Permutation}, it is shown that $$\dim \left( \bigcap_{i=1}^k L(a_i,0)\right)=2m-k,$$ whenever $\{a_1,\ldots a_k\}$ are linearly independent. The following is a generalisation of \cite[Lemma 7.7B]{Permutation}.

\begin{Lem}\label{l:linind}
Let $m \geq 4$ and let $a,b,c$ be linearly independent vectors in $V$. For any $\epsilon_1,\epsilon_2,\epsilon_3 \in \mathbb{F}_2$, $\theta_0$ is not constant on
$$L(a,\epsilon_1) \cap L(b,\epsilon_2) \cap L(c,\epsilon_3). $$
\end{Lem}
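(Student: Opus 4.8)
The plan is to argue by contradiction: suppose $\theta_0$ is constant, say identically equal to $\eta\in\mathbb{F}_2$, on the coset-like set $W:=L(a,\epsilon_1)\cap L(b,\epsilon_2)\cap L(c,\epsilon_3)$. The first observation is that $W$ is a coset of the subspace $W_0:=L(a,0)\cap L(b,0)\cap L(c,0)$, which by the dimension formula quoted from \cite{Permutation} has dimension $2m-3\geq 5$ since $a,b,c$ are linearly independent and $m\geq 4$. (If $W$ is empty there is nothing to prove, but one should note that since $\varphi$ is nondegenerate and $a,b,c$ are independent, the linear map $v\mapsto(\varphi(v,a),\varphi(v,b),\varphi(v,c))$ is surjective onto $\mathbb{F}_2^3$, so every choice of $(\epsilon_1,\epsilon_2,\epsilon_3)$ gives a nonempty $W$.) Fix $w_0\in W$, so $W=w_0+W_0$, and for $x\in W_0$ use the polarisation identity $\theta_0(w_0+x)=\theta_0(w_0)+\theta_0(x)+\varphi(w_0,x)$. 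Constancy of $\theta_0$ on $W$ then says $\theta_0(x)+\varphi(w_0,x)=0$ for all $x\in W_0$; in other words, on the subspace $W_0$ the quadratic form $\theta_0$ agrees with the linear functional $x\mapsto\varphi(w_0,x)$.

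The heart of the argument is to show this is impossible because a quadratic form of the type $\theta_0$ restricted to a subspace of dimension $\geq 2$ (indeed $\geq 3$) cannot coincide with a linear functional. The cleanest way is to work with the polarisation: $\theta_0$ restricted to $W_0$ has polar form $\varphi|_{W_0\times W_0}$, whose radical is $W_0\cap W_0^{\perp}$; since $\varphi$ restricted to $W_0=\{a,b,c\}^{\perp}$ can degenerate only on the span of $a,b,c$ intersected with $W_0$, one checks $\dim(W_0\cap W_0^\perp)\leq 3$, so the nondegenerate part of $\theta_0|_{W_0}$ still lives on a space of dimension at least $2m-3-3=2m-6\geq 2$. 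A nonzero alternating form is nonzero as a polarisation, so $\theta_0|_{W_0}$ is genuinely quadratic (not linear) there, contradicting $\theta_0(x)=\varphi(w_0,x)$, whose polarisation is identically zero. Alternatively, and perhaps more transparently, pick two vectors $x,y\in W_0$ with $\varphi(x,y)=1$ (possible since $\dim W_0\geq 2$ and the above degeneracy count leaves a hyperbolic pair): then $\theta_0(x+y)=\theta_0(x)+\theta_0(y)+1$, while a linear functional would force $\theta_0(x+y)=\theta_0(x)+\theta_0(y)$, a contradiction.

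I expect the main obstacle to be the bookkeeping around degeneracy: one must make sure that after cutting down by three hyperplanes $L(\cdot,0)$ the restriction of the symplectic form $\varphi$ to $W_0$ still contains a hyperbolic pair, i.e. that the radical $W_0\cap\langle a,b,c\rangle$ does not swallow everything. Here is where the hypothesis $m\geq 4$ (rather than $m\geq 3$) is used: $\dim W_0=2m-3\geq 5$ while the radical has dimension at most $3$, leaving a nondegenerate part of dimension at least $2$. This is precisely the point at which the bound in the lemma is sharp, and it parallels the role of $m\geq 4$ in \cite[Lemma 7.7B]{Permutation}; care must be taken that the three functionals $\varphi(\cdot,a),\varphi(\cdot,b),\varphi(\cdot,c)$ used to define $W$ are independent (guaranteed by the linear independence of $a,b,c$ together with nondegeneracy of $\varphi$), so that the dimension count $2m-3$ is exact rather than merely a lower bound.
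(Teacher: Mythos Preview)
Your proof is correct but takes a different route from the paper's. The paper argues constructively: since $U:=L(a,0)\cap L(b,0)\cap L(c,0)$ has dimension $2m-3>3$, one may pick $d\in U$ linearly independent of $a,b,c$; then the four functionals $\varphi(\cdot,a),\varphi(\cdot,b),\varphi(\cdot,c),\varphi(\cdot,d)$ are independent, so for any $\epsilon_4$ there exists $w\in L(a,\epsilon_1)\cap L(b,\epsilon_2)\cap L(c,\epsilon_3)\cap L(d,\epsilon_4)$. Choosing $\epsilon_4=\theta_0(d)+1$ gives $\theta_0(w+d)=\theta_0(w)+\theta_0(d)+\varphi(w,d)=\theta_0(w)+1$, so $w$ and $w+d$ both lie in the target set with different $\theta_0$-values. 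Your argument instead proceeds by contradiction, reducing to the structural claim that $\theta_0|_{W_0}$ cannot coincide with a linear functional, which you establish by computing the radical $W_0\cap\langle a,b,c\rangle$ of $\varphi|_{W_0}$ and exhibiting a hyperbolic pair in the complement. This is more conceptual --- it isolates \emph{why} the obstruction is precisely the nontriviality of the polar form on $W_0$ --- but it requires the extra bookkeeping on the radical. The paper's version sidesteps that entirely: it needs only $\dim W_0>3$ to produce a single auxiliary vector $d$, and never analyses $\varphi|_{W_0}$. Both approaches use $m\geq 4$ sharply, yours via $2m-6\geq 2$ and the paper's via $2m-3>3$.
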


\begin{proof}
By assumption, $U:=L(a,0) \cap L(b,0) \cap  L(c,0)$ is a subspace of dimension $2m-3 > 3$ in $V$, so there is $d \in U$ which is linearly independent of $a,b$ and $c$. This means we may choose $$w \in L(a,\epsilon_1) \cap L(b,\epsilon_2) \cap L(c,\epsilon_3) \cap L(d,\epsilon_4) $$ for any $\epsilon_4 \in \mathbb{F}_2$. 
The fact that $d\in U$ implies that $w,w+d \in L(a,\epsilon_1) \cap L(b,\epsilon_2) \cap L(c,\epsilon_3)$, so that on setting $\epsilon_4:=\theta_0(d)+1$, we have: $$\theta_0(w+d)=\theta_0(w)+ \theta_0(d)+\varphi(w,d)=\theta_0(w)+1,$$ as needed.
\end{proof}

We can now prove Theorem \ref{t:3sets}:

\begin{proof}
Fix $\epsilon,\delta \in \mathbb{F}_2$. We first prove that $\O_\delta^\epsilon$, $\O_{1-\delta}^\epsilon$ are the two distinct orbits of $\Sp_{2m}(2)$ on 3-subsets of $\Omega^\epsilon$.
By Lemma \ref{l:evensum}, both $\mathcal{O}^\ep_\delta$ and $\O_{1-\delta}^\epsilon$ are fixed setwise by $\Sp_{2m}(2)$.
When $m=3$, a GAP \cite{GAP} calculation verifies that each is in fact an $\Sp_{2m}(2)$-orbit.
Thus we assume from now on that $m \geq 4$.

Since $\Sp_{2m}(2)$ acts 2-transitively on $\Omega^\epsilon$, it is sufficient to prove that whenever $a,b,c,d \in V^\ep$,
there is an element $x$ of $\Sp_{2m}(2)$ which fixes $\theta_c,\theta_d$ but maps $\theta_a$ to $\theta_b$ if and only if $\theta_0(a+c+d)=\theta_0(b+c+d)$. 
(Recall that $\{\theta_a,\theta_c,\theta_d\}$ and $\{\theta_b,\theta_c,\theta_d\}$ are both elements in $\mathcal{O}^\ep_\delta$, or both elements in
 $\O_{1-\delta}^\epsilon$, if and
only if $\theta_0(a+c+d)=\theta_0(b+c+d)$.)

In order to prove this fact, we will show that there is $w \in V^\epsilon$ such that
\begin{equation}\label{3transcond}
\theta_c(a+w)=\theta_c(b+w)=\theta_d(a+w)=\theta_d(b+w)=1
\end{equation} if and only if $\theta_0(a+c+d)=\theta_0(b+c+d)$. Note that, since $w\in V^\epsilon$, we easily deduce that
\[
 \theta_a(a+w)=\theta_w(b+w)=0.
\]
This, along with \eqref{3transcond} and Lemma \ref{l:trans}, implies that we may take $x=t_{a+w} \cdot t_{b+w}$ and we are done.

Thus it remains to show that there is $w\in V^\epsilon$ satisfying \eqref{3transcond}. One easily checks that \eqref{3transcond} is equivalent to
\begin{align*}
\varphi(w,a+c)&=1+\varphi(a,c);  \\
\varphi(w,a+d)&=1+\varphi(a,d); \\
\varphi(w,b+c)&=1+\varphi(b,c); \\
\varphi(w,b+d)&=1+\varphi(b,d). 
\end{align*}

Since the vectors $\{a+c,b+c,a+d\}$ are linearly independent, Lemma \ref{l:linind}  implies that $\theta_0$ is not constant on $$L(a+c,1+\varphi(a,c)) \cap L(b+c,1+\varphi(b,c)) \cap L(a+d,1+\varphi(a,d))$$ (notice that this assertion holds even if $d=a+b+c$ by \cite[Lemma 7.7B]{Permutation}.) Thus whatever value $\epsilon$ takes, there exists $w\in V^\ep$ satisfying the conditions in (\ref{3transcond}) if and only if $\varphi(w,b+d)=1+\varphi(b,d)$ holds above. But $\theta_0(a+c+d)=\theta_0(b+c+d)$ if and only if $\varphi(b,c)+\varphi(b,d)=\varphi(a,c)+\varphi(a,d)$ which is if and only if $$\varphi(w,b+d)=\varphi(w,b)+\varphi(w,d)=\varphi(b,c)+\varphi(w,c)+\varphi(a,d)+\varphi(w,a)$$
$$=\varphi(b,d)+\varphi(a,c)+\varphi(w,c)+\varphi(w,a)=1+\varphi(b,d),$$
as required. This proves the first assertion in Theorem \ref{t:3sets}.

It remains to prove the last statement. Let $v \in V^\delta$, $\{\theta_{v_1},\theta_{v_2},\theta_{v_3}\} \in \Delta_v^\epsilon$ and $c \in V$. By Lemma \ref{l:evensum}, $$\theta_{v_1}^{t_c}+\theta_{v_2}^{t_c}+\theta_{v_3}^{t_c}=\theta_{v_1+v_2+v_3}^{t_c},$$
so that either $(\Delta_v^\epsilon)^{t_c} \cap \Delta_v^\epsilon = \emptyset$ or $(\Delta_v^\epsilon)^{t_c} \cap \Delta_w^\epsilon = \Delta_v^\epsilon$ according to whether $\theta_v(c)=0$ or $1$ respectively. Since $\Sp_{2m}(2)$ is transitive on $\O_\delta^\epsilon$,  $\Delta:=\{\Delta_v^\epsilon \mid v \in V^\delta\}$ forms a system of imprimitivity for the action of $\Sp_{2m}(2)$ on $\O_\delta^\epsilon$ (in fact the action of $\Sp_{2m}(2)$ on $\Delta$ is equivalent to its action on $\Omega^\delta$) and the proof is complete.
\end{proof}

\subsection{The action of \texorpdfstring{$2^{2m}.\Sp_{2m}(2)$}{Sp(2m,2)} on \texorpdfstring{$3$-subsets}{3-subsets}}
We next consider an analogous situation for the affine group $2^{2m}.\Sp_{2m}(2)$ whose elements may be identified with pairs $(v,g)$ with $v \in (\mathbb{F}_2)^{2m}$ and $g \in \Sp_{2m}(2)$, and the action on $V$ is given by 
\begin{equation}\label{e:action}
u^{(v,g)}=u^g+v^g\,\,\,\forall u\in V.
\end{equation}

\begin{Thm}\label{t:3sets2}
Let $m \geq 3$. The action of $2^{2m}.\Sp_{2m}(2)$ on $3$-subsets of elements in $V$ splits into two orbits, $\O_0$ and $\O_1$, defined as follows:

$$\O_\delta:=\{\{v_1,v_2,v_3\} \mid v_i \in V, \varphi(v_1,v_2)+\varphi(v_1,v_3)+\varphi(v_2,v_3)=\delta\}.$$
\end{Thm}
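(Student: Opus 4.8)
The plan is to produce an explicit complete invariant for the action and then check that each of its two level sets is a single orbit. Define $q(\{v_1,v_2,v_3\}):=\varphi(v_1,v_2)+\varphi(v_1,v_3)+\varphi(v_2,v_3)$. First I would note that $q$ is well defined on $3$-subsets, since $\varphi$ is symmetric. Next I would verify that $q$ is invariant under $2^{2m}.\Sp_{2m}(2)$: as this group is generated by its linear part $\Sp_{2m}(2)$ (which preserves $\varphi$) together with the translations, it is enough to treat translations, and for $u\mapsto u+t$ one has $\varphi(v_i+t,v_j+t)=\varphi(v_i,v_j)+\varphi(v_i,t)+\varphi(v_j,t)$ (using $\varphi(t,t)=0$), so that summing over the three pairs each term $\varphi(v_i,t)$ occurs twice and cancels over $\mathbb{F}_2$. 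Hence $\O_0$ and $\O_1$ are each a union of orbits, and both are non-empty (for instance $\{0,e_1,e_2\}\in\O_0$ and $\{0,e_1,e_{m+1}\}\in\O_1$, with $e_i$ the $i$-th standard basis vector, using $m\geq 3$).

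It remains to prove transitivity on each of $\O_0,\O_1$. Here I would use that $2^{2m}.\Sp_{2m}(2)$ is $2$-transitive on $V$: the translation subgroup is transitive and the stabiliser of $0$ is $\Sp_{2m}(2)$, which is transitive on $V\setminus\{0\}$. Given a $3$-subset $\{v_1,v_2,v_3\}$, translating by $v_1$ carries it to $\{0,a,b\}$ with $a:=v_1+v_2$ and $b:=v_1+v_3$ non-zero and distinct (precisely because $v_1,v_2,v_3$ are distinct), and $q(\{0,a,b\})=\varphi(a,b)$. So the problem reduces to showing that, for each $\delta\in\mathbb{F}_2$, the group $\Sp_{2m}(2)$ is transitive on the set of $2$-subsets $\{a,b\}\subseteq V\setminus\{0\}$ with $\varphi(a,b)=\delta$.

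Finally I would settle the two cases using Witt's theorem. If $\varphi(a,b)=1$ then $(a,b)$ is a hyperbolic pair and $\Sp_{2m}(2)$ is transitive on ordered hyperbolic pairs (part of the standard inductive construction of a symplectic basis; cf. \cite{Ta}), hence on the corresponding unordered $2$-subsets. If $\varphi(a,b)=0$ then, since two distinct non-zero vectors over $\mathbb{F}_2$ are automatically linearly independent, $\langle a,b\rangle$ is a totally isotropic $2$-dimensional subspace; for another such pair $(a',b')$ the linear map $a\mapsto a'$, $b\mapsto b'$ is an isometry between $\langle a,b\rangle$ and $\langle a',b'\rangle$ and so extends to an element of $\Sp_{2m}(2)$ by Witt's theorem, giving transitivity here too. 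Combining this with the reduction above completes the proof, and I would remark that the argument is uniform in $m$, so no separate computer verification of a base case is needed (in contrast with the proof of Theorem~\ref{t:3sets}).

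The bulk of the invariance computation and the reduction is routine once organised carefully; I expect the only places demanding attention are the reduction step — one must check that translating a genuine $3$-subset by $v_1$ again yields a genuine $3$-subset, which it does — and the correct invocation of Witt's theorem. The genuine mathematical content lies entirely in the two transitivity statements for $\Sp_{2m}(2)$, and these are classical.
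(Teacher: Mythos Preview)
Your proof is correct and takes a genuinely different route from the paper's. The paper first checks the invariance of $\O_\delta$ much as you do, but then argues transitivity by fixing two points $c,d$ and explicitly constructing an element of $2^{2m}.\Sp_{2m}(2)$ (a product of two affine transvections) that carries a third point $a$ to a fourth point $b$ precisely when $\varphi(a,c)+\varphi(a,d)=\varphi(b,c)+\varphi(b,d)$; the existence of the auxiliary vector $w$ needed for this construction relies on Lemma~\ref{l:linind}, which requires $m\geq 4$, so the case $m=3$ is handled separately by a GAP computation. Your reduction---translating to put one point at $0$ and then appealing to Witt's theorem for the stabiliser $\Sp_{2m}(2)$ acting on pairs of non-zero vectors with prescribed $\varphi$-value---is both shorter and uniform in $m$, so no computer check is needed. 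What the paper's approach buys is an explicit witness for the group element and a close structural parallel with the proof of Theorem~\ref{t:3sets}; what yours buys is conceptual clarity and the elimination of the base-case verification. Your invocation of Witt is sound: the form $\varphi$ is non-degenerate alternating, and over $\mathbb{F}_2$ two distinct non-zero vectors are automatically linearly independent, so the map $a\mapsto a'$, $b\mapsto b'$ is a genuine isometry of $2$-dimensional subspaces in both the hyperbolic and totally isotropic cases.
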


\begin{proof}
When $m=2,3$ we verify all assertions via a  GAP \cite{GAP} computation, so we assume from now on that $m \geq 4$. 
Let $\{v_1,v_2,v_3\}\in\mathcal{O}_\delta$.  Then a straight forward calculation shows that
$$\sum^3_{\text{i,j=1}\atop\text{i<j}}\varphi(v_i^{(v,g)},v_j^{(v,g)})=\sum^3_{\text{i,j=1}\atop\text{i<j}}\varphi(v_i^g,v_j^g),$$
and because $\Sp_{2m}(2)$ preserves $\varphi$, the right hand side of the above equation is equal to $\delta$. Hence $\mathcal{O}_\delta$
is fixed setwise by $2^{2m}.\Sp_{2m}(2)$.  

As is well-known, $2^{2m}.\Sp_{2m}(2)$ acts $2$-transitively on $V$, so it suffices to show that for each $a,b,c,d \in V$ there is an element of $2^{2m}.\Sp_{2m}(2)$ which fixes $c,d$ and maps $a$ to $b$ if and only if \begin{equation}\label{eq3} \varphi(a,c)+\varphi(a,d)=\varphi(b,c)+\varphi(b,d).
\end{equation} As in the proof of Theorem \ref{t:3sets} there exists $w \in V$ such that $$\varphi(w,a+c)=1+\varphi(a,c);$$  
$$\varphi(w,a+d)=1+\varphi(a,d);  $$
$$\varphi(w,b+c)=1+\varphi(b,c).  $$

Summing up the left hand sides of these equations and using (\ref{eq3}) we see that $\varphi(w,b+d) = 1+\varphi(b,d)$. It is now easy to verify that the element $g \cdot h \in 2^{2m}.\Sp_{2m}(2)$ has the required property with $$g:=(w^{t_a}+a^{t_w},t_{a+w}) \mbox{ and } h:=(w^{t_b}+b^{t_w},t_{b+w}).$$ 

\end{proof}

\subsection{Construction of \texorpdfstring{$\De^\ep$}{D+-}}
For $\ep \in \mathbb{F}_2$, it follows from Lemma \ref{l:evensum} that we can describe $\B^\ep$ from Theorem A as:
$$\B^\ep:=\{\{\theta_a,\theta_b,\theta_c,\theta_{a+b+c}\} \mid \{\theta_a,\theta_b,\theta_c\} \in \O_\ep^\ep \}.$$


\begin{Lem}\label{l:spdes}
$\De^\ep:=(\Omega^\ep,\B^\ep)$ is a supersimple $2-(|\Omega^\ep|,4,\lambda^\ep)$ design for some $\lambda^\ep > 0$.
\end{Lem}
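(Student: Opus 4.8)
The plan is to verify the three defining properties of a supersimple $2$-design directly: (i) each line has size $4$, (ii) any two points lie in a constant number $\lambda^\ep > 0$ of lines, and (iii) any two lines meet in at most two points.

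For (i), I would observe that a member of $\B^\ep$ has the form $\{\theta_a,\theta_b,\theta_c,\theta_{a+b+c}\}$ with $\{\theta_a,\theta_b,\theta_c\}\in\O_\ep^\ep$; since $\theta_0(a+b+c)=\ep$ we have $\theta_{a+b+c}\in\Omega^\ep$, and I must check the four entries are genuinely distinct. If say $\theta_{a+b+c}=\theta_a$ then $b+c=0$, contradicting that $\theta_b,\theta_c$ are distinct; the other cases are symmetric, so the line really does have $4$ points. Next I would note that the resulting $4$-set is symmetric in the sense that taking any $3$ of its elements and summing the corresponding vectors recovers the fourth, so the construction does not depend on which three of the four points we started with — this is exactly Lemma~\ref{l:evensum} applied with $k=3$.

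For (ii), fix two distinct points $\theta_x,\theta_y\in\Omega^\ep$. By Theorem~\ref{t:3sets}, $\Sp_{2m}(2)$ acts $2$-transitively on $\Omega^\ep$, hence transitively on ordered (and so on unordered) pairs of distinct points; since $\B^\ep$ is $\Sp_{2m}(2)$-invariant (again by Lemma~\ref{l:evensum}, which shows the orbit $\O_\ep^\ep$ — and hence $\B^\ep$ — is preserved), the number of lines through $\{\theta_x,\theta_y\}$ is independent of the pair chosen. Call this number $\lambda^\ep$. To see $\lambda^\ep>0$ I just need one line containing a given pair: starting from $\theta_x,\theta_y$ I would produce a third vector $z\in V^\ep$ with $\theta_0(x+y+z)=\ep$ (a counting/linear-algebra argument, or an appeal to Corollary~\ref{c:sumof2} and the surjectivity statements in Lemma~\ref{l:sumof2} and Theorem~\ref{t:3sets}); then $\{\theta_x,\theta_y,\theta_z,\theta_{x+y+z}\}\in\B^\ep$.

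For (iii), suppose two lines $\{\theta_{a_1},\theta_{a_2},\theta_{a_3},\theta_{a_1+a_2+a_3}\}$ and $\{\theta_{b_1},\theta_{b_2},\theta_{b_3},\theta_{b_1+b_2+b_3}\}$ share three points. Relabelling, I may assume the shared points are $\theta_{a_1},\theta_{a_2},\theta_{a_3}$, say equal to $\theta_{b_1},\theta_{b_2},\theta_{b_3}$ in some order (using that within a line the fourth point is determined by the other three, via the summation-symmetry above, so I can always arrange the three common points among the "first three" of each line). Then $a_i=b_{\sigma(i)}$ as vectors for some permutation $\sigma$, whence $a_1+a_2+a_3=b_1+b_2+b_3$, so the fourth points coincide and the two lines are equal. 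Thus distinct lines meet in at most two points. The main obstacle here is bookkeeping: making the relabelling in (iii) rigorous requires the fact that any three of the four points on a line determine the fourth, which is precisely Lemma~\ref{l:evensum}; once that is invoked cleanly the argument is short. I expect (ii) — specifically pinning down that $\lambda^\ep>0$ and, if desired, its exact value $f_\ep(m-1)-1$ — to be the part needing the most care, though transitivity reduces it to exhibiting a single line through a single pair.
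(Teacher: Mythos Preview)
Your argument is correct and follows essentially the same route as the paper: supersimplicity via ``three points determine the fourth'', and the $2$-design property via $\Sp_{2m}(2)$-invariance of $\B^\ep$ together with $2$-transitivity on $\Omega^\ep$. Two small remarks: first, $2$-transitivity is not the content of Theorem~\ref{t:3sets} but rather the standard fact recorded just before it (from \cite[Theorem 7.7A]{Permutation}); second, the paper observes that $\B^\ep$ is actually a single $\Sp_{2m}(2)$-orbit on $4$-subsets (not merely invariant), which makes $\lambda^\ep>0$ immediate from non-emptiness and spares you the explicit construction of a line through a given pair.
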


\begin{proof}
Clearly $\B^\ep$ contains no repeated lines (by definition). Moreover, given any $\ell\in \B^\ep$, any three points in $\ell$
uniquely determine the fourth, so the intersection of any two lines has size at most 2. As $\O_\ep^\ep$ is an $\Sp_{2m}(2)$-orbit, 
we deduce from Lemma \ref{l:evensum} that $\B^\ep$ is a $\Sp_{2m}(2)$-orbit on the 4-subsets of $\Omega^\ep$.  Since $\Sp_{2m}(2)$ acts 2-transitively on $\Omega^\ep$, $\De^\ep:=(\Omega^\ep,\B^\ep)$ is a $2-(|\Omega^\ep|,4,\lambda^\ep)$ design for some $\lambda^\ep > 0$ by \cite[Lemma 4.3]{Puzz}.
\end{proof}





For the design $\mathcal{D}^\ep$, it remains to calculate the values of $|\Omega^\ep|$ and $\lambda^\ep$. It is well known that $n^\ep:=|\Omega^\ep|=|V^\ep|=2^{m-1}(2^m + (-1)^\epsilon)$. One proof of this comes from a (probably well known) inductive construction for $V^\epsilon$, which we now describe.


For $k>0$ let $V_k$ denote the $\mathbb{F}_2$-vector space of dimension $2k$, and as before, $V^\ep_k=\{v\in V_k\,|\,\theta_0(v)=\ep\}$
where $\theta_0$ is defined over the appropriate dimension. For each $x,y \in \mathbb{F}_2$, $k>0$ and $v=(v_1,v_2)\in V_k$ (here each $v_i$ is an $\mathbb{F}_2$-vector of length $k$), let $v_{xy}=(x,v_1,y,v_2)\in V_{k+1}$. Moreover, let $(V_k^\epsilon)^{xy}:=\{v_{xy} \mid v \in V_k^\epsilon \} \subseteq V_{k+1}$.
\begin{Lem}\label{l:indcons}
For each $\epsilon \in \mathbb{F}_2$, $$V_{k+1}^\epsilon=(V_k^\epsilon)^{00} \cup (V_k^\epsilon)^{01} \cup (V_k^\epsilon)^{10} \cup (V_k^{1-\epsilon})^{11}.$$ In particular, $|V_{k+1}^\epsilon|=3|V_k^\epsilon|+|V_k^{1-\epsilon}|$ and $|V_m^\epsilon|=2^{m-1}(2^m + (-1)^\epsilon)$.
\end{Lem}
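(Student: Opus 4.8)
The plan is to prove the displayed decomposition of $V_{k+1}^\epsilon$ directly from the definition of $\theta_0$, and then deduce the recursion and closed form by induction. First I would work out how $\theta_0$ on $V_{k+1}$ evaluates on a vector $v_{xy}=(x,v_1,y,v_2)$ in terms of $\theta_0$ on $v=(v_1,v_2)\in V_k$. With the standard block form $e=\begin{pmatrix}0&I\\0&0\end{pmatrix}$, one has $\theta_0(w)=w e w^T=\sum_{i} w_i w_{i+(k+1)}$ for $w\in V_{k+1}$; splitting off the newly inserted coordinates in positions $1$ and $k+2$ gives $\theta_0(v_{xy})=xy+\theta_0(v)$, where the $\theta_0$ on the right is the form on $V_k$. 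This is the one genuinely computational step, and it is short.

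Given that identity, the set-theoretic equality is immediate: a vector of $V_{k+1}$ is uniquely of the form $v_{xy}$ with $(x,y)\in\mathbb{F}_2^2$ and $v\in V_k$, and $\theta_0(v_{xy})=\epsilon$ iff either $xy=0$ and $\theta_0(v)=\epsilon$ (the three cases $00,01,10$), or $xy=1$ and $\theta_0(v)=1-\epsilon$ (the case $11$). Hence $V_{k+1}^\epsilon$ is the stated disjoint union, and since the four maps $v\mapsto v_{xy}$ are injective with pairwise disjoint images (they differ in the inserted coordinates), the cardinality relation $|V_{k+1}^\epsilon|=3|V_k^\epsilon|+|V_k^{1-\epsilon}|$ follows at once.

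For the closed form I would induct on $k$, carrying both $\epsilon=0$ and $\epsilon=1$ simultaneously. The base case $k=1$: $V_1=\mathbb{F}_2^2$, $\theta_0(v)=v_1v_2$, so $|V_1^0|=3=2^0(2^1+1)$ and $|V_1^1|=1=2^0(2^1-1)$. For the inductive step, set $a_k:=2^{k-1}(2^k+(-1)^\epsilon)$ and $b_k:=2^{k-1}(2^k-(-1)^\epsilon)=2^{k-1}(2^k+(-1)^{1-\epsilon})$; assuming $|V_k^\epsilon|=a_k$ and $|V_k^{1-\epsilon}|=b_k$, the recursion gives $|V_{k+1}^\epsilon|=3a_k+b_k=3\cdot2^{k-1}(2^k+(-1)^\epsilon)+2^{k-1}(2^k-(-1)^\epsilon)=2^{k-1}(4\cdot2^k+2(-1)^\epsilon)=2^k(2^{k+1}+(-1)^\epsilon)$, as required. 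This is a routine verification with no obstacle; I would present it compactly.

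There is essentially no hard part here — the only place to be careful is the coordinate bookkeeping in computing $\theta_0(v_{xy})$, i.e.\ making sure the inserted bits land in symplectically-paired positions $1$ and $k+2$ so that the cross term is exactly $xy$ and the remaining sum reconstitutes $\theta_0(v)$ on $V_k$ with no leftover cross terms. Once that is pinned down, everything else is formal. I would also remark that the same inductive description re-proves that $\{V_k^0,V_k^1\}$ partitions $V_k$ with the stated sizes, recovering the ``well known'' fact cited before the lemma, so the lemma is genuinely self-contained.
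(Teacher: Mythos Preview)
Your proof is correct and follows essentially the same approach as the paper's: the paper simply asserts that each of the four sets is contained in $V_{k+1}^\epsilon$, that these exhaust $V_{k+1}^\epsilon$ and are pairwise disjoint, and then leaves the explicit formula as an exercise. You have filled in precisely the details the paper omits (the key identity $\theta_0(v_{xy})=xy+\theta_0(v)$ and the induction for the closed form), so your write-up is a more explicit version of the same argument.
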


\begin{proof}
Clearly each of $(V_k^\epsilon)^{00}$, $(V_k^\epsilon)^{01}$, $(V_k^\epsilon)^{10}$ and $(V_k^{1-\epsilon})^{11}$ is contained in $V_{k+1}^\epsilon$. Conversely any element of $V_{k+1}^\epsilon$ must lie in one of these sets. Thus, since these sets are pairwise disjoint, $|V_{k+1}^\epsilon|=3|V_k^\epsilon|+|V_k^{1-\epsilon}|$. Obtaining an explicit formula for $|V_k^\epsilon|$ is now safely left as an exercise.
\end{proof}






\begin{Lem}\label{l:indcons2}
Let $\lambda^\ep$ be the number of lines that contain any pair of points in the design $\De^\ep$.  Then $$\lambda^\ep=2^{m-2}(2^{m-1}+(-1)^\ep)-1.$$
\end{Lem}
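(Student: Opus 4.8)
The plan is to compute $\lambda^\ep$ directly as the number of lines of $\De^\ep$ through a single, conveniently chosen, pair of points; this is legitimate because Lemma~\ref{l:spdes} already tells us $\De^\ep$ is a $2$-design, so this number is independent of the pair. The first step is to translate the count into linear algebra. Using the description of $\B^\ep$ recorded just before Lemma~\ref{l:spdes} (together with Lemma~\ref{l:evensum}), the lines of $\De^\ep$ through the points $\theta_{w_1}$ and $\theta_{w_2}$ are precisely the sets $\{\theta_{w_1},\theta_{w_2},\theta_{w_3},\theta_{w_4}\}$ with $w_3,w_4\in V^\ep$, $w_1+w_2+w_3+w_4=0$, and $w_1,w_2,w_3,w_4$ pairwise distinct (the condition $\theta_0(w_1+w_2+w_3)=\ep$ being automatic, as $w_1+w_2+w_3=w_4\in V^\ep$). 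Hence $\lambda^\ep$ is the number of unordered pairs $\{w_3,w_4\}$ of distinct elements of $V^\ep\setminus\{w_1,w_2\}$ with $w_3+w_4=w_1+w_2$.

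The next step is to fix $w_1,w_2\in V^\ep$ with $\varphi(w_1,w_2)=0$: for $\ep=0$ take $w_1=0$ and $w_2$ any nonzero element of $V^0$, and for $\ep=1$ take $w_1=e_1+e_{m+1}$ and $w_2=e_2+e_{m+2}$, which lie in $V^1$ and satisfy $\varphi(w_1,w_2)=0$ (here we use $m\geq 3$). Putting $u:=w_1+w_2\neq 0$, the polarisation identity gives $\theta_0(u)=\theta_0(w_1)+\theta_0(w_2)+\varphi(w_1,w_2)=0$, so $u\in V^0$; consequently, for $w_3\in V^\ep$ we have $\theta_0(w_3+u)=\ep+\theta_0(u)+\varphi(w_3,u)=\ep+\varphi(w_3,u)$, whence $w_3+u\in V^\ep$ if and only if $\varphi(w_3,u)=0$, i.e. $w_3\in u^\perp$. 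One checks that the constraints $w_3\notin\{w_1,w_2\}$ and $w_3+u\notin\{w_1,w_2\}$ both reduce to $w_3\notin\{w_1,w_2\}$, and that $w_1,w_2$ themselves lie in $V^\ep\cap u^\perp$ (since $\varphi(w_i,u)=\varphi(w_i,w_1+w_2)=0$). As each line $\{\theta_{w_1},\theta_{w_2},\theta_{w_3},\theta_{w_3+u}\}$ arises from exactly two values of $w_3$, namely $w_3$ and $w_3+u$ (distinct because $u\neq 0$), this yields
\[
\lambda^\ep=\tfrac{1}{2}\bigl(|V^\ep\cap u^\perp|-2\bigr).
\]

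It then remains to evaluate $|V^\ep\cap u^\perp|$, and I expect this perp-space computation to be the main step. Since $u$ is a nonzero singular vector for the plus-type form $\theta_0=\sum_{i=1}^m x_ix_{m+i}$ and $\varphi$ is nondegenerate, $W:=u^\perp$ is a hyperplane whose radical with respect to $\varphi|_W$ is $\langle u\rangle$, and the identity $\theta_0(x+u)=\theta_0(x)+\theta_0(u)+\varphi(x,u)=\theta_0(x)$ for $x\in W$ shows that $\theta_0|_W$ descends to a nondegenerate quadratic form $\bar\theta_0$ on $W/\langle u\rangle\cong\mathbb{F}_2^{2m-2}$. By the standard behaviour of orthogonal spaces over $\mathbb{F}_2$ under reduction at a singular vector, $\bar\theta_0$ is again of plus type, so the number of $\bar x\in W/\langle u\rangle$ with $\bar\theta_0(\bar x)=\ep$ equals $f_\ep(m-1)=2^{m-2}(2^{m-1}+(-1)^\ep)$ by \eqref{e:fep}; since $\theta_0$ is constant on the two-element cosets of $\langle u\rangle$ in $W$, this gives $|V^\ep\cap u^\perp|=2f_\ep(m-1)$. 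Substituting into the display yields $\lambda^\ep=f_\ep(m-1)-1=2^{m-2}(2^{m-1}+(-1)^\ep)-1$, as required. The one point genuinely requiring care is the choice of the pair $\{w_1,w_2\}$: had $\varphi(w_1,w_2)=1$, the set of admissible $w_3$ would be an affine hyperplane rather than the subspace $u^\perp$, and the quadratic-form count on a coset is more delicate — so it is essential to use the $2$-transitivity of $\Sp_{2m}(2)$ on $\Omega^\ep$ (equivalently, the $2$-design property from Lemma~\ref{l:spdes}) to reduce to an orthogonal pair.
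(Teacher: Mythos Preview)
Your proof is correct and follows the same overall strategy as the paper: both exploit the $2$-design property (equivalently, the $2$-transitivity of $\Sp_{2m}(2)$ on $\Omega^\ep$) to fix a convenient pair $\theta_{w_1},\theta_{w_2}$ with $w_1+w_2\in V^0$, and then count the elements $v\in V^\ep$ for which $w_1+w_2+v\in V^\ep$. The difference lies in how the count is executed. The paper chooses $w_1+w_2=e_1$ explicitly and reads off the condition as $v_{m+1}=0$, then appeals to the coordinate decomposition of Lemma~\ref{l:indcons} to obtain $2|V^\ep_{m-1}|$ directly. You instead recognise the condition as $v\in u^\perp$ for the singular vector $u=w_1+w_2$ and invoke the standard fact that a plus-type quadratic form over $\mathbb{F}_2$ induces a plus-type form on $u^\perp/\langle u\rangle$, arriving at $2f_\ep(m-1)$. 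Your route is more conceptual and coordinate-free, at the cost of relying on a fact about orthogonal geometries not proved in the paper; the paper's route is entirely self-contained via the inductive lemma. Both are clean, and your closing remark about why an orthogonal pair is essential is well observed---though note that the paper's choice $w_1+w_2=e_1\in V^0$ automatically forces $\varphi(w_1,w_2)=0$ by the polarisation identity, so the two setups coincide.
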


\begin{proof} Let $\theta_w,\theta_z\in \Omega^\ep$, and recall from \eqref{d:overline} the definition of $\overline{\theta_w,\theta_z}$. 
Then $\theta_v\in\overline{\theta_w,\theta_z}$ if and only if $\theta_0(w+z+v)=\ep$.
As $\De^\ep$ is supersimple, $2+2\lambda^\ep=|\overline{\theta_w,\theta_z}|$, and so
$$2+2\lambda^\ep=|\{v\in V_m^\ep\,|\,\theta_0(w+z+v)=\ep\}|.$$
Since $\Sp_{2m}(2)$ acts $2$-transitively on $\Omega^\ep$, we can assume that $w=0$ and $z=e_1$, or $w=e_1+e_2+e_{m+2}$ and $z=e_2+e_{m+2}$ for $\ep=0$ or $1$ 
respectively. In particular, we can assume that $w+z=e_1$. Now, for $v\in V^\ep_m$, $\theta_0(e_1+v)=\ep$ if and only if $v_{m+1}=0$.  
Hence, by Lemma \ref{l:indcons}, $$\{v\in V_m^\ep\,|\,\theta_0(w+z+v)=\ep\}=(V_{m-1}^\epsilon)^{00}\cup (V_{m-1}^\epsilon)^{10},$$ and so 
$2+2\lambda^\ep=2|V_{m-1}^\ep|=2^{m-1}(2^{m-1}+(-1)^\ep)$.  Rearranging this gives the result.
\end{proof}

\subsection{Construction of \texorpdfstring{$\De^a$}{D+-}}
Define $\B^a$ as in the statement of Theorem A:
$$\B^a:=\{\{v_1,v_2,v_3,v_1+v_2+v_3\} \mid v_i \in V, \sum_{i=1}^3 \theta_0(v_i)=\theta_0\left(\sum_{i=1}^3 v_i\right)\}.$$

\begin{Lem}\label{l:aff}
$\De^a:=(V,\B^a)$ is a supersimple $2-(2^{2m},4,2^{2m-2}-1)$ design.
\end{Lem}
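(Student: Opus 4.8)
The plan is to follow the template of the proof of Lemma~\ref{l:spdes}, with Theorem~\ref{t:3sets2} playing the role of Theorem~\ref{t:3sets}, and then to determine the value of $\lambda^a$ by computing the support of an elementary move, exactly as in Lemma~\ref{l:indcons2}. The first step is to rewrite the defining condition of $\B^a$: applying the polarisation identity $\theta_0(x+y)=\theta_0(x)+\theta_0(y)+\varphi(x,y)$ twice yields $\theta_0(v_1+v_2+v_3)=\sum_{i=1}^3\theta_0(v_i)+\varphi(v_1,v_2)+\varphi(v_1,v_3)+\varphi(v_2,v_3)$, so the condition $\sum_{i=1}^3\theta_0(v_i)=\theta_0\bigl(\sum_{i=1}^3 v_i\bigr)$ is equivalent to $\varphi(v_1,v_2)+\varphi(v_1,v_3)+\varphi(v_2,v_3)=0$. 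When $v_1,v_2,v_3$ are pairwise distinct this is precisely the statement that $\{v_1,v_2,v_3\}\in\O_0$, in the notation of Theorem~\ref{t:3sets2}; and in that case $v_1+v_2+v_3\notin\{v_1,v_2,v_3\}$, since otherwise two of the $v_i$ would coincide. (Triples with a repeated entry contribute only subsets of size at most two, hence no lines.) So, as a set of $4$-subsets, $\B^a=\{\{v_1,v_2,v_3,v_1+v_2+v_3\}\mid\{v_1,v_2,v_3\}\in\O_0\}$.

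Next I would observe that, since we are in characteristic $2$, $(v_1+v_2+v_3)^{(v,g)}=v_1^{(v,g)}+v_2^{(v,g)}+v_3^{(v,g)}$ for every $(v,g)\in 2^{2m}.\Sp_{2m}(2)$, so the map $\{v_1,v_2,v_3\}\mapsto\{v_1,v_2,v_3,v_1+v_2+v_3\}$ from $3$-subsets to $4$-subsets is $2^{2m}.\Sp_{2m}(2)$-equivariant. By Theorem~\ref{t:3sets2}, $\O_0$ is an orbit of $2^{2m}.\Sp_{2m}(2)$, so $\B^a$, being its image under this map, is a single orbit of $2^{2m}.\Sp_{2m}(2)$ on the $4$-subsets of $V$. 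As in the proof of Lemma~\ref{l:spdes}, $\B^a$ contains no repeated lines, and since any three points of a line $\{v_1,v_2,v_3,v_1+v_2+v_3\}$ determine the fourth (the four points sum to $0$), any two distinct lines meet in at most two points. Since $2^{2m}.\Sp_{2m}(2)$ acts $2$-transitively on $V$, \cite[Lemma 4.3]{Puzz} then shows that $\De^a$ is a supersimple $2-(2^{2m},4,\lambda^a)$ design for some $\lambda^a>0$.

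It remains to evaluate $\lambda^a$. Supersimplicity gives $|\overline{p,q}|=2+2\lambda^a$ for any two points $p,q\in V$, and by $2$-transitivity we may take $p=0$ and $q=u$ with $u\neq 0$. For $v\notin\{0,u\}$, the only candidate line through $0,u,v$ is $\{0,u,v,u+v\}$, and by the description of $\B^a$ above this lies in $\B^a$ if and only if $\varphi(0,u)+\varphi(0,v)+\varphi(u,v)=\varphi(u,v)=0$. Since $\varphi(u,0)=\varphi(u,u)=0$ as well, this gives $\overline{0,u}=\{v\in V\mid\varphi(u,v)=0\}=L(u,0)$, which is a hyperplane because $\varphi$ is non-degenerate; hence $|\overline{0,u}|=2^{2m-1}$, so $2+2\lambda^a=2^{2m-1}$ and $\lambda^a=2^{2m-2}-1$, as claimed. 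I do not anticipate a genuine obstacle here, since the proof is essentially a transcription of the arguments used for $\De^\ep$ in Lemmas~\ref{l:spdes} and \ref{l:indcons2}; the only points needing a little care are the bookkeeping with degenerate triples $\{v_1,v_2,v_3\}$ in the first step (so that $\B^a$, as a set of $4$-subsets, is genuinely the image of $\O_0$) and the verification of the equivariance of the $4$-subset map, both of which are routine.
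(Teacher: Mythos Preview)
Your proof is correct and follows essentially the same route as the paper's: rewrite the defining condition via the polarisation identity as $\varphi(v_1,v_2)+\varphi(v_1,v_3)+\varphi(v_2,v_3)=0$, use the equivariance $(v_1+v_2+v_3)^{(v,g)}=\sum v_i^{(v,g)}$ together with Theorem~\ref{t:3sets2} to see that $\B^a$ is a single $2^{2m}.\Sp_{2m}(2)$-orbit on $4$-subsets, invoke $2$-transitivity and \cite[Lemma~4.3]{Puzz}, and then compute $\lambda^a$ from $|\overline{0,u}|=|\{z:\varphi(u,z)=0\}|=2^{2m-1}$. The only differences are cosmetic: the paper carries out the computation of $\lambda^a$ inside this same lemma rather than by analogy with Lemma~\ref{l:indcons2}, and does not linger over the degenerate-triple bookkeeping.
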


\begin{proof}
$\B^a$ contains no repeated lines (by definition) and for each $\ell\in \B^a$, any three points in $\ell$ uniquely determine the fourth, so the intersection of any two lines has size at most 2. Now since $$\sum_{i=1}^3 \theta_0(v_i)=\theta_0\left(\sum_{i=1}^3 v_i\right) \Longleftrightarrow \varphi(v_1,v_2)+\varphi(v_1,v_3)+\varphi(v_2,v_3)=0,$$
$\O_0$ is a $2^{2m}.\Sp_{2m}(2)$-orbit. Also for each $(v,g) \in 2^{2m}.\Sp_{2m}(2)$,

$$(v_1+v_2+v_3)^{(v,g)}=(v_1+v_2+v_3)^g+v^g=\sum_{i=1}^3 (v_i^g+v^g)=\sum_{i=1}^3 v_i^{(v,g)},$$
and we deduce that $\B^a$ is a $2^{2m}.\Sp_{2m}(2)$-orbit on the 4-subsets of $V$.  Since $2^{2m}.\Sp_{2m}(2)$ acts 2-transitively on $V$, $\De^a:=(V,\B^a)$ is a $2-(|V|,4,\lambda)$ design for some $\lambda > 0$ by \cite[Lemma 4.3]{Puzz}, and it remains to calculate $\lambda.$ Now, for each $x,y \in V$, by definition, $$2\lambda+2=|\{z \in V \mid \varphi(x,y)+\varphi(x,z)+\varphi(y,z)=0\}|.$$

Using the 2-transitivity of $2^{2m}.\Sp_{2m}(2)$ again, we may assume that $x=0$ and $y=e_1=(1,0,\ldots,0)$. Hence $2\lambda+2=|\{z \in V \mid \varphi(e_1,z)=0\}|$ which has order $2^{2m-1}$. The result follows.
\end{proof}

\begin{proof}[Proof of Theorem A]
Theorem A follows as an immediate consequence of Lemmas \ref{l:spdes}, \ref{l:indcons}, \ref{l:indcons2} and \ref{l:aff}.
\end{proof}

\section{Infinite familes of Conway groupoids}\label{s:puzz}
In this section, our goal is a description of the Conway groupoids  $\C(\De^\ep)$ and $\C(\De^a)$. 
Taken together, the results of this section yield a proof of Theorem B. First recall the notation $[x,y]$ of Section \ref{sub:design} for a pair $\{x,y\}$ of points in a supersimple $2-(n,4,\lambda)$ design.

\begin{Lem}\label{l:transperm}
The following hold:
\begin{itemize}
\item[(a)] For each $x_0,y_0 \in V^\ep$, the action of $t_{x_0+y_0}$ on $\Omega^\ep$ induces the permutation $[\theta_{x_0},\theta_{y_0}]$.
\item[(b)] For each $x_0,y_0 \in V$, the action of $(y_0^{t_{x_0}}+x_0^{t_{y_0}},t_{x_0+y_0})$ on $V$ induces the permutation $[x_0,y_0]$.
\end{itemize}
\end{Lem}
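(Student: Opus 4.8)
The plan is to prove each of the two statements by exhibiting an explicit element of the relevant permutation group and checking that, as a permutation, it equals the elementary move $[x_0,y_0]$. Recall from Section~\ref{sub:design} that $[x_0,y_0]$ is the permutation $(x_0,y_0)\prod_{i=1}^{\lambda}(a_i,b_i)$, where the $\{x_0,y_0,a_i,b_i\}$ range over all lines through $\{x_0,y_0\}$; equivalently, its support is $\overline{x_0,y_0}$ and it acts as the unique involution fixing $x_0,y_0$ setwise that, on each line through $\{x_0,y_0\}$, swaps the two points of $\ell\setminus\{x_0,y_0\}$ while swapping $x_0\leftrightarrow y_0$. So in each case it suffices to check that the proposed group element (i) is an involution, (ii) swaps $\theta_{x_0}\leftrightarrow\theta_{y_0}$ (resp.\ $x_0\leftrightarrow y_0$), (iii) preserves $\Omega^\ep$ (resp.\ fixes nothing extra), and (iv) acts on each line of $\B^\ep$ (resp.\ $\B^a$) through the pair in the prescribed way — and moreover fixes every point not on such a line.

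For part (a): take the transvection $t_c$ with $c:=x_0+y_0$. Since $x_0,y_0\in V^\ep$ we have $\theta_0(c)=\theta_0(x_0)+\theta_0(y_0)+\varphi(x_0,y_0)=\varphi(x_0,y_0)$, but more to the point Lemma~\ref{l:trans}(ii) tells us that $t_c$ maps $\theta_{x_0}$ to $\theta_{y_0}$ precisely because $\theta_0(x_0)=\theta_0(y_0)$ and $x_0+y_0=c$; being a transvection, $t_c$ is an involution, so it also sends $\theta_{y_0}\mapsto\theta_{x_0}$. Next, for a general $\theta_a\in\Omega^\ep$, Lemma~\ref{l:trans}(i) says $\theta_a^{t_c}=\theta_a$ if $\theta_a(c)=1$ and $\theta_a^{t_c}=\theta_{a+c}$ if $\theta_a(c)=0$. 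In the latter case $\{\theta_a,\theta_{a+c}\}$ together with $\{\theta_{x_0},\theta_{y_0}\}$ forms a $4$-set whose vector sum is $a+(a+c)+x_0+y_0=c+c=0$... — one checks instead directly that $\{x_0,y_0,a,a+c\}$ is a line of $\B^\ep$ by verifying the defining condition $\theta_0(x_0+y_0+a)=\ep$ is equivalent (via $\theta_a(c)=0$ and Lemma~\ref{l:evensum}) to $a+c\in V^\ep$, and that conversely every line through $\{\theta_{x_0},\theta_{y_0}\}$ arises this way. Thus $t_c$ fixes exactly the points $\theta_a$ with $\theta_a(c)=1$ (i.e.\ those not collinear with the pair) and swaps the two "new" points on each line through the pair, which is exactly $[\theta_{x_0},\theta_{y_0}]$; in particular $t_c$ does preserve $\Omega^\ep$ since it fixes or swaps-within-lines, and lines lie inside $\Omega^\ep$.

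For part (b): set $g:=(y_0^{t_{x_0}}+x_0^{t_{y_0}},\,t_{x_0+y_0})\in 2^{2m}.\Sp_{2m}(2)$, acting via \eqref{e:action}. Using the formula $u^{t_c}=u+\varphi(u,c)c$ one computes $u^g=u^{t_{x_0+y_0}}+(y_0^{t_{x_0}}+x_0^{t_{y_0}})^{t_{x_0+y_0}}$ and checks by a short bilinear-form calculation that $x_0^g=y_0$ and $y_0^g=x_0$, and that $g^2=1$ (the translation part is chosen exactly so that $g$ is an involution fixing the right pair — this is the only genuinely computational point, and it mirrors the construction of the elements $g\cdot h$ in the proof of Theorem~\ref{t:3sets2}). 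One then verifies that for $u\in V$, either $u^g=u$, or $u^g=u+(x_0+y_0)$ with $\{x_0,y_0,u,u^g\}\in\B^a$; concretely $u^g\ne u$ iff $\varphi(u,x_0+y_0)=1$, and in that case the defining relation $\sum\theta_0=\theta_0(\sum)$ for $\{x_0,y_0,u,u+x_0+y_0\}$ holds identically, so this $4$-set is a line, and every line of $\B^a$ through $\{x_0,y_0\}$ is of this shape. Hence $g$ fixes exactly the points not collinear with $\{x_0,y_0\}$ and swaps the two other points of each line through the pair, i.e.\ $g$ induces $[x_0,y_0]$.

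The main obstacle is purely bookkeeping: in part (b), confirming that the prescribed translation vector $y_0^{t_{x_0}}+x_0^{t_{y_0}}$ makes $g$ an involution with the correct fixed structure requires an honest (if routine) computation with the affine action \eqref{e:action} and the transvection formula, keeping careful track of which $\varphi$-values vanish. Part (a) is essentially immediate from Lemma~\ref{l:trans}. I would also make explicit the (elementary) observation that in both designs the support of $[x_0,y_0]$ — namely $\overline{x_0,y_0}$ — is exactly the set of points $u$ with $\varphi(u,x_0+y_0)=1$, since that is what ties the Lemma~\ref{l:trans}-style case analysis to the combinatorial definition of the elementary move.
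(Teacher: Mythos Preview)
Your approach is essentially the same as the paper's: for (a) apply Lemma~\ref{l:trans} to the transvection $t_{x_0+y_0}$, and for (b) compute the action of the given affine element directly using the formula $u^{(v,g)}=u^g+v^g$ and the simplification $y_0^{t_{x_0}}+x_0^{t_{y_0}}=(1+\varphi(x_0,y_0))(x_0+y_0)$. Part (a) is fine.

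However, in part (b) (and in your concluding remark) you misstate the collinearity condition. You assert that $u^g\ne u$ iff $\varphi(u,x_0+y_0)=1$, and that the support of $[x_0,y_0]$ is $\{u:\varphi(u,x_0+y_0)=1\}$. Both are wrong. A direct computation gives
\[
u^g=u+\bigl(\varphi(u,x_0+y_0)+1+\varphi(x_0,y_0)\bigr)(x_0+y_0),
\]
so $u^g\ne u$ iff $\varphi(u,x_0+y_0)=\varphi(x_0,y_0)$. Likewise, $\{x_0,y_0,u,u+x_0+y_0\}\in\B^a$ iff $\varphi(x_0,y_0)+\varphi(x_0,u)+\varphi(y_0,u)=0$, i.e.\ iff $\varphi(u,x_0+y_0)=\varphi(x_0,y_0)$; this certainly does not ``hold identically''. (The same correction applies in $\De^\ep$: one finds $\theta_a(x_0+y_0)=0$ iff $\varphi(a,x_0+y_0)=\varphi(x_0,y_0)$.) When $\varphi(x_0,y_0)=0$ your stated condition is the exact opposite of the correct one, so as written the argument would show $g$ moves precisely the \emph{non}-collinear points. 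Once you replace ``$=1$'' by ``$=\varphi(x_0,y_0)$'' throughout, the two conditions match up and the proof goes through exactly as in the paper.
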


\begin{proof}
To prove (a), our goal is to show that $t_{x_0+y_0}$ induces the permutation $$\prod_{i=0}^{\lambda^\ep} (\theta_{x_i},\theta_{y_i}),$$ where $\{\theta_{x_0},\theta_{y_0}, \theta_{x_i}, \theta_{y_i}\}$ are the lines in $\B^\ep$ containing $\{\theta_{x_0},\theta_{y_0}\}$ for $1 \leq i \leq \lambda^\ep.$ Note that $\theta_0(x_0+y_0+x_i)=\ep$ and $x_i+y_i=x_0+y_0$, so writing $c:=x_0+y_0$ we have $\theta_{x_i}(c)=\theta_{y_i}(c)=0$. Hence by Lemma \ref{l:trans},
$$\theta_{x_i}^{t_c}=\theta_{x_i+(1+\theta_{x_i}(x_i+y_i))\cdot x_i+y_i}=\theta_{y_i},$$ and similarly for $y_i$. 

Finally, if $z \in V^\ep$ is such that $\theta_0(c+z)=1-\epsilon$ then $$\theta_z(c)=\theta_0(c)+\varphi(c,z)=\theta_0(c+z)+\theta_0(z)=1-\epsilon+\epsilon=1,$$ so that $\theta_z^{t_c}=\theta_z$. This completes the proof of (a).

We next prove (b). Recall from \eqref{e:action} that we regard elements of $2^{2m}.\Sp_{2m}(2)$ as ordered pairs $(v,g)$, so that the action of $2^{2m}.\Sp_{2m}(2)$ on $V$ is described by $$x^{(v,g)} := x^g+v^g $$ for each $x \in V$. We need to show that $h:=(y_0^{t_{x_0}}+x_0^{t_{y_0}},t_{x_0+y_0})$ induces the permutation $$\prod_{i=0}^\lambda (x_i,y_i)$$ where $\{x_0,y_0,x_i,y_i\}$ are the lines in $\B^a$ containing $\{x_0,y_0\}$ for $1 \leq i \leq \lambda$. Observe that $$y_0^{t_{x_0}}+x_0^{t_{y_0}}=(1+\varphi(x_0,y_0))(x_0+y_0),$$ so that 
\begin{align*} x_0^ h = &\,\,x_0^{t_{x_0+y_0}}+(1+\varphi(x_0,y_0))(x_0+y_0)\\
=&\,\,x_0+\varphi(x_0,x_0+y_0)(x_0+y_0)+(1+\varphi(x_0,y_0))(x_0+y_0)\\
 =&\,\,x_0+x_0+y_0= y_0,
 \end{align*} and similarly for $y_0$. Next, for any $1 \leq i \leq \lambda$, we have that $\varphi(x_i,x_0+y_0)=\varphi(x_0,y_0)$, so that 
 
 \begin{align*}
 x_i^h = &\,\,x_i^{t_{x_0+y_0}}+(1+\varphi(x_0,y_0))(x_0+y_0)\\
    = &\,\,x_i+\varphi(x_i,x_0+y_0)(x_0+y_0)+(1+\varphi(x_0,y_0))(x_0+y_0)\\
    = &\,\,x_i+x_0+y_0=y_i\end{align*}
    Lastly, if $r \notin \overline{x_0,y_0}$ then $\varphi(r,x_0+y_0)=\varphi(x_0,y_0)+1$, and a similar calculation shows that $r^h = r$. This completes the proof.
\end{proof}

\begin{Lem}\label{l:transconj}
Let $u,v,w \in V$ be such that \begin{equation}\label{eq6}
\varphi(u,v)+ \varphi(u,w)+\varphi(v,w)=1.
\end{equation} 
The following hold:
\begin{itemize}
\item[(a)] $t_{u+v}^{t_{v+w}}=t_{u+w}.$
\item[(b)] $x^y=z$, where
$$x:=(v^{t_u}+u^{t_v},t_{u+v}) \hspace{3mm} y:=(w^{t_v}+v^{t_w},t_{v+w}) \hspace{3mm}  z:=(w^{t_u}+u^{t_w},t_{u+w})$$ 
\end{itemize}
\end{Lem}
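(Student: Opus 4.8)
The plan is to dispatch part~(a) first, since part~(b) reduces to it. For part~(a), recall the standard conjugation rule for symplectic transvections: if $A\in\Sp_{2m}(2)$ then $A^{-1}t_cA=t_{cA}$, which is a direct consequence of the defining relation $AfA^T=f$ (it forces $\varphi(uA^{-1},c)=\varphi(u,cA)$ for all $u$). Applying this with $A=t_{v+w}$, and using that $t_{v+w}$ is an involution, we get $t_{u+v}^{t_{v+w}}=t_{(u+v)^{t_{v+w}}}$, so it is enough to evaluate $(u+v)^{t_{v+w}}=(u+v)+\varphi(u+v,v+w)(v+w)$. Expanding and using $\varphi(v,v)=0$ gives $\varphi(u+v,v+w)=\varphi(u,v)+\varphi(u,w)+\varphi(v,w)$, which equals $1$ by hypothesis~\eqref{eq6}; hence $(u+v)^{t_{v+w}}=u+v+v+w=u+w$ and $t_{u+v}^{t_{v+w}}=t_{u+w}$, as claimed.

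For part~(b) I would first fix the multiplication in $2^{2m}.\Sp_{2m}(2)$ coming from the action~\eqref{e:action}: a short calculation gives $(v_1,g_1)(v_2,g_2)=(v_1+v_2^{g_1^{-1}},g_1g_2)$ and $(v,g)^{-1}=(v^g,g^{-1})$. I would also simplify the translation vectors using the identity $b^{t_a}+a^{t_b}=(1+\varphi(a,b))(a+b)$ already recorded in the proof of Lemma~\ref{l:transperm}, so that $x=(\alpha,g)$, $y=(\beta,h)$, $z=(\gamma,k)$ with $\alpha=(1+\varphi(u,v))(u+v)$, $g=t_{u+v}$, $\beta=(1+\varphi(v,w))(v+w)$, $h=t_{v+w}$, $\gamma=(1+\varphi(u,w))(u+w)$, $k=t_{u+w}$. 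Multiplying out with the rule above yields $x^y=y^{-1}xy=\bigl(\beta^h+\alpha^h+\beta^{g^{-1}h},\,h^{-1}gh\bigr)$, and by part~(a) the $\Sp$-component $h^{-1}gh=t_{u+v}^{t_{v+w}}$ is exactly $t_{u+w}=k$, matching $z$. (Alternatively, since $2^{2m}.\Sp_{2m}(2)$ acts faithfully on $V$, one could finish via Lemma~\ref{l:transperm}(b) by checking the permutation identity $[u,v]^{[v,w]}=[u,w]$; but verifying the translation component directly is just as quick.)

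It then remains to check $\beta^h+\alpha^h+\beta^{g^{-1}h}=(1+\varphi(u,w))(u+w)=\gamma$. The inputs are: $t_c$ fixes $c$, so $\beta^h=\beta$ as $\beta\in\langle v+w\rangle$; and the transvection formula together with~\eqref{eq6}, which forces each of $\varphi(u+v,v+w)$, $\varphi(v+w,u+v)$, $\varphi(u+w,v+w)$ to equal $1$. These give $\alpha^h=(1+\varphi(u,v))(u+w)$ and $\beta^{g^{-1}h}=(1+\varphi(v,w))(u+v)$; summing the three terms, using $(u+v)+(v+w)=u+w$, leaves $(\varphi(u,v)+\varphi(v,w))(u+w)$, and a final appeal to~\eqref{eq6} (which says $\varphi(u,v)+\varphi(v,w)=1+\varphi(u,w)$) gives $\gamma$. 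The only real obstacle here is bookkeeping: pinning down the multiplication convention in $2^{2m}.\Sp_{2m}(2)$ and tracking the $\mathbb{F}_2$-coefficients correctly through the transvection evaluations; conceptually there is nothing beyond part~(a).
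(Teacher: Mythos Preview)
Your proof is correct and follows essentially the same route as the paper's own argument: part~(a) is handled via $A^{-1}t_cA=t_{cA}$ and the direct computation $(u+v)^{t_{v+w}}=u+w$, and for part~(b) both you and the paper write out the conjugation formula in $2^{2m}.\Sp_{2m}(2)$ and reduce to verifying the translation component by evaluating the same three terms $\beta^h$, $\alpha^h$, $\beta^{g^{-1}h}$ (the paper's $c$, $a^d$, $c^{bd}$) and summing. The only cosmetic difference is that you derive the product and inverse rules in the semidirect product explicitly, whereas the paper simply quotes the resulting conjugation identity.
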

\begin{proof}
To prove (a), we note, by \cite[p.246]{Permutation}, that $u^{t_v}=u+\varphi(u,v)v$ for all $u,v\in V$, and $x^{-1}t_vx=t_{vx}$ for all $x\in \Sp_{2m}(2)$. 
Using (\ref{eq6}), it is straightforward to show that $(u+v)^{t_{v+w}}=u+w$, from which the result now follows.

It remains to prove that $x^y=z$, as in the statement of the lemma. If $(a,b), (c,d) \in 2^{2m}.\Sp_{2m}(2)$, by the usual multiplication rule for the semi-direct product we have: $$(a,b)^{(c,d)}=((c^{-1})^{d} + a^d + c^{(d^{-1}b)^{-1}}, b^d).$$ We will apply in the case where  $$a=v^{t_u}+u^{t_v},\hspace{5mm} b=t_{u+v}, \hspace{5mm} c=w^{t_v}+v^{t_w},\hspace{5mm} d=t_{v+w}.$$ It thus suffices to prove that $(c^{-1})^d + a^d + c^{(d^{-1}b)^{-1}}=w^{t_u}+u^{t_w}$.

 Note that for each $\{i,j,k\} =\{u,v,w\}$, $$(i+k)^{t_{i+j}}=j+k.$$ Using this we obtain the following two equations:
\begin{align*}
a^d=&\,\,(\varphi(u,v)+1)(u+v)^{t_{v+w}}=(\varphi(u,v)+1)(u+w),\\
c^{(d^{-1}b)^{-1}}=&\,\,c^{bd}=(\varphi(w,v)+1)((v+w)^{t_{u+v}t_{v+w}})=(\varphi(w,v)+1)(v+u)).
\end{align*}
Noting also that $(c^{-1})^d=c^d=c=(\varphi(v,w)+1)(v+w)$, we conclude
$$c+a^d+c^{bd}= (\varphi(w,v)+\varphi(u,v))(u+w)=(\varphi(u,w)+1)(u+w)=w^{t_u}+u^{t_w}.$$ 
This completes the proof.
\end{proof}


For the remainder of this section we identify, in both cases, the points of the respective design with vectors in $V$, 
allowing us to amalgamate arguments. We now combine Lemmas \ref{l:transperm} and \ref{l:transconj} to obtain the following corollary:

\begin{Cor}\label{c:transconj}
Let $\infty,a,b$ be a triple of points in $\De^\ep$ or $\De^a$ such that $\infty \notin \overline{a,b}$. Then $$[\infty,a]^{[a,b]}=[\infty,b].$$ Consequently, $[\infty,a,b,\infty]=[a,b]$.
\end{Cor}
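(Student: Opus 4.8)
The plan is to reduce the corollary, for each of the two admissible designs, to Lemmas~\ref{l:transperm} and~\ref{l:transconj} by transporting the relevant identities through a permutation representation.

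First I would fix the identification of points with vectors described just before the statement: when $\De=\De^\ep$ write $\infty=\theta_u$, $a=\theta_v$, $b=\theta_w$ with $u,v,w\in V^\ep$, and when $\De=\De^a$ write $\infty=u$, $a=v$, $b=w$ with $u,v,w\in V$. The one point genuinely requiring care is the following uniform reformulation of the hypothesis: in \emph{both} cases, $\infty\notin\overline{a,b}$ is equivalent to $\varphi(u,v)+\varphi(u,w)+\varphi(v,w)=1$, which is precisely the hypothesis~\eqref{eq6} of Lemma~\ref{l:transconj}. For $\De^\ep$ this follows by expanding $\theta_0(u+v+w)=\theta_0(u)+\theta_0(v)+\theta_0(w)+\varphi(u,v)+\varphi(u,w)+\varphi(v,w)=\ep+\varphi(u,v)+\varphi(u,w)+\varphi(v,w)$ and recalling, from the proof of Lemma~\ref{l:indcons2}, that $\theta_u\in\overline{\theta_v,\theta_w}$ if and only if $\theta_0(u+v+w)=\ep$. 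For $\De^a$ it follows from the equivalence established in the proof of Lemma~\ref{l:aff}, namely that a point $p$ lies in $\overline{v,w}$ exactly when $\varphi(p,v)+\varphi(p,w)+\varphi(v,w)=0$.

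With this reformulation in place, both cases run in parallel. For $\De^\ep$: Lemma~\ref{l:transperm}(a) identifies $[\infty,a]=[\theta_u,\theta_v]$, $[a,b]=[\theta_v,\theta_w]$ and $[\infty,b]=[\theta_u,\theta_w]$ as the permutations of $\Omega^\ep$ induced by the transvections $t_{u+v}$, $t_{v+w}$ and $t_{u+w}$; since $u,v,w$ satisfy~\eqref{eq6}, Lemma~\ref{l:transconj}(a) gives $t_{u+v}^{\,t_{v+w}}=t_{u+w}$ in $\Sp_{2m}(2)$, and passing to induced permutations yields $[\infty,a]^{[a,b]}=[\infty,b]$. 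For $\De^a$ one argues identically, with Lemma~\ref{l:transperm}(b) identifying $[\infty,a]$, $[a,b]$, $[\infty,b]$ as the permutations of $V$ induced by the affine elements $x$, $y$, $z$ of Lemma~\ref{l:transconj}(b), and part~(b) of that lemma supplying $x^y=z$. The only subtlety here is that the actions of Section~\ref{s:action} are right actions, but since the conjugating element in each case is an involution (a transvection is an involution, and the affine element $y$ induces the involution $[a,b]$, hence is one), the conjugacy relation $g_1^{g_2}=g_3$ transfers to the induced permutations regardless of the composition convention adopted.

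The second assertion is then purely formal: by definition $[\infty,a,b,\infty]=[\infty,a]\cdot[a,b]\cdot[b,\infty]$, and $[b,\infty]=[\infty,b]$. Each elementary move is a product of $\lambda+1$ transpositions with pairwise disjoint supports --- supersimplicity forces two distinct lines sharing a given pair of points to meet in exactly that pair --- and is therefore an involution; so rewriting $[\infty,a]^{[a,b]}=[\infty,b]$ as $[\infty,a]\cdot[a,b]=[a,b]\cdot[\infty,b]$ and substituting gives $[\infty,a,b,\infty]=[a,b]\cdot[\infty,b]\cdot[\infty,b]=[a,b]$. I expect the main obstacle to be precisely the uniform reformulation of the hypothesis in the second paragraph; once that is established, the rest is a mechanical combination of the two preceding lemmas.
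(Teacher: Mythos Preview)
Your proof is correct and, for the first assertion, follows exactly the paper's route: translate the hypothesis $\infty\notin\overline{a,b}$ into the condition $\varphi(u,v)+\varphi(u,w)+\varphi(v,w)=1$ of Lemma~\ref{l:transconj}, then invoke Lemma~\ref{l:transperm} to transport the conjugacy relations (a) and (b) of Lemma~\ref{l:transconj} to the permutation side.

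For the second assertion there is a small but genuine difference. You derive $[\infty,a,b,\infty]=[a,b]$ purely formally from the single identity $[\infty,a]^{[a,b]}=[\infty,b]$, by rewriting it as $[\infty,a][a,b]=[a,b][\infty,b]$ and then cancelling $[\infty,b]^2$. The paper instead inserts $[\infty,a]^2$ and applies Lemma~\ref{l:transconj} twice more with the roles of $u,v,w$ permuted (using $[a,b]^{[\infty,a]}=[\infty,b]$ and then $[\infty,a]^{[\infty,b]}=[a,b]$). Your argument is cleaner: it needs only the one instance of the conjugacy identity already established, and does not rely on the symmetry of condition~\eqref{eq6} under permutation of $u,v,w$. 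The paper's route, on the other hand, makes the three-fold symmetry explicit. Both are short and valid; yours is the more economical.
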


\begin{proof}
In both cases, $\infty \notin \overline{a,b}$ implies that $$\varphi(\infty,a)+\varphi(\infty,b)+\varphi(a,b)=1.$$ Hence by Lemmas \ref{l:transperm} and \ref{l:transconj}, we have that $[\infty,a]^{[a,b]}=[\infty,b]$. Now two applications of Lemma \ref{l:transconj} yield $$[\infty,a,b,\infty]=[\infty,a][a,b][b,\infty]=[\infty,a][a,b][\infty,a][\infty,a][b,\infty] $$ $$=[\infty,b][\infty,a][b,\infty]=[a,b],$$ as required.
\end{proof}



For the designs $\De^\ep$ (respectively $\De^a$) recall that $\L(\De^\ep)$ (respectively $\L(\De^a)$) denote the set of \textit{all} move sequences. We will apply Corollary \ref{c:transconj} to show that the permutation induced by a move sequence in $\L(\De^\ep)$ or $\L(\De^a)$ can be generated with a move sequence which starts with an element of our choosing:

\begin{Lem}\label{l:infstart}
Let $\L$ be either of the sets $\L(\De^\ep)$ or $\L(\De^a)$. For any $g \in \L$ and any point $\infty$, there exist $l > 0$ and a set of points $\{b_1,\ldots, b_l\}$ such that $g=[\infty,b_1,b_2 \ldots, b_l].$ 
\end{Lem}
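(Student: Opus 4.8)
The plan is to show that any move sequence can be "re-routed" to begin at the prescribed point $\infty$, using Corollary~\ref{c:transconj} to slide an initial elementary move past $\infty$. Let $g = [a_0, a_1, \ldots, a_k] \in \L$ be an arbitrary move sequence (every element of $\L$ is of this form by definition). I would argue by induction on the combined length of a representation of $g$ as a product of elementary moves, with the goal of replacing it by a move sequence whose first point is $\infty$.

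The key reduction is the following. Suppose we have written $g = [\infty, \ldots]$ except for possibly an issue at the very start, i.e. more precisely suppose $g = [a_0, a_1, \ldots]$ with $a_0 \neq \infty$; I want to rewrite $[a_0, a_1] = [a_0, a_1] $ in a way that inserts $\infty$ at the front. Pick the first point $a_0$ of the sequence. If $\infty \notin \overline{a_0, a_1}$, then Corollary~\ref{c:transconj} gives $[\infty, a_0]^{[a_0,a_1]} = [\infty, a_1]$, equivalently $[a_0,a_1] = [\infty,a_0][\infty,a_1] = [\infty, a_0, \infty, a_1]$ (using $[\infty,a_0]=[a_0,\infty]$ and that $[\infty,a_0]$ is an involution), so $g = [\infty, a_0, \infty, a_1, a_2, \ldots, a_k]$, which now starts at $\infty$. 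The remaining case is $\infty \in \overline{a_0, a_1}$. Here I would exploit the large minimum distance / richness of the designs: since $m \geq 3$, there are plenty of points, and I claim one can find an auxiliary point $c$ with $\infty \notin \overline{a_0, c}$ and $c \notin \overline{a_0,a_1}$ (or more directly, use the transvection/affine description to choose $c$ so that $\varphi(\infty, a_0) + \varphi(\infty, c) + \varphi(a_0,c) = 1$). Then write $[a_0, a_1] = [a_0, c][c, a_1]$ if $a_1 \in \overline{a_0, c}$ is arranged — actually cleaner: insert $c$ so that $[a_0,a_1] = [a_0,c,a_0,a_1]$ whenever $[a_0,c]$ is an involution and commutes appropriately, but the honest move is to observe $[a_0,a_1] = [a_0,c][c,a_0][a_0,a_1]$ is trivially true and reduce to a sequence $[a_0, c, \ldots]$ with $\infty \notin \overline{a_0,c}$, to which the first case applies after noting $[c,a_0][a_0,a_1]=[c,a_0,a_1]$.

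So the cleanest structure is: first establish the \textbf{Claim} that for any two points $a_0, a_1$ there is a point $c$ (depending on $\infty$) such that $\infty \notin \overline{a_0, c}$ and $[a_0,a_1] = [a_0, c] \cdot (\text{move sequence from } c)$; concretely one can always write $[a_0, a_1]= [a_0,c,b_2,\ldots]$ with $\infty\notin\overline{a_0,c}$, simply by choosing $c$ appropriately when $\infty\in\overline{a_0,a_1}$ and taking $c = a_1$ otherwise. Having secured $\infty \notin \overline{a_0, c}$, apply Corollary~\ref{c:transconj}: $g = [a_0, c, \ldots] = [a_0, \infty, \infty, c, \ldots]$ wait — better, $[a_0,c] = [a_0,\infty][\infty,c]$ requires $\infty\notin\overline{a_0,c}$ which is exactly what Corollary~\ref{c:transconj} (via $[\infty,a]^{[a,b]}=[\infty,b]$, equivalently $[\infty,a][a,b]=[\infty,b][\infty,a]$... ) delivers in the form $[\infty,a_0][a_0,c]=[\infty,c][\infty,a_0]$, hence $[a_0,c]=[\infty,a_0][\infty,c][\infty,a_0]$, so $g = [\infty, a_0, \infty, c, \infty, a_0, \ldots]$ which starts at $\infty$. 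Collecting the tail points gives the desired $\{b_1,\ldots,b_l\}$.

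The main obstacle is the \textbf{Claim} about the existence of the auxiliary point $c$ with $\infty \notin \overline{a_0, c}$: one needs to know that for a fixed point $a_0$, the set of points $c$ with $\infty \in \overline{a_0, c}$ is not everything, and moreover that we can route through such a $c$ to reach $a_1$. In the concrete setting of $\De^\ep$ and $\De^a$ this is immediate from the bilinear-form description — the condition $\infty \notin \overline{a_0,c}$ translates to $\varphi(\infty,a_0)+\varphi(\infty,c)+\varphi(a_0,c)=1$, which defines a nonempty affine subset of $V$ complementary to a hyperplane-type condition, so for $m \geq 3$ it contains enough points (in particular points not in the finite set $\overline{a_0,a_1}$). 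I would spell this linear-algebra verification out carefully, since it is where the hypothesis $m\geq 3$ and the specific structure of the designs genuinely enter; everything else is formal manipulation of elementary moves using that each $[a,b]$ is an involution and Corollary~\ref{c:transconj}.
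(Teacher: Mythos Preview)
Your treatment of the case $\infty \notin \overline{a_0,a_1}$ is fine and essentially matches the paper. The gap is in the case $\infty \in \overline{a_0,a_1}$: the auxiliary-point manoeuvre you propose does not terminate. Writing $[a_0,a_1]=[a_0,c][c,a_0][a_0,a_1]$ increases the length of the sequence while the problematic move $[a_0,a_1]$ is still present; replacing $[a_0,c]$ by $[\infty,a_0,\infty,c,\infty,a_0]$ leaves you with a tail $[c,a_0,a_1,\ldots,a_k]$ that starts at $c$, not at the endpoint $a_0$ of the prefix, so the pieces do not concatenate into a single move sequence. Iterating the trick is circular. A natural strengthening---choosing $c$ so that $\infty\notin\overline{a_0,c}$, $\infty\notin\overline{c,a_1}$, and $c\notin\overline{a_0,a_1}$, which would allow Corollary~\ref{c:transconj} to be applied at every step---is in fact impossible here: summing the three bilinear conditions
\[
\varphi(\infty,a_0)+\varphi(\infty,c)+\varphi(a_0,c)=1,\quad
\varphi(\infty,a_1)+\varphi(\infty,c)+\varphi(a_1,c)=1,\quad
\varphi(a_0,a_1)+\varphi(a_0,c)+\varphi(a_1,c)=1
\]
forces $\varphi(\infty,a_0)+\varphi(\infty,a_1)+\varphi(a_0,a_1)=1$, contradicting $\infty\in\overline{a_0,a_1}$.

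The paper avoids all of this with a one-line observation you have overlooked: by Lemma~\ref{l:transperm}, in both $\De^\ep$ and $\De^a$ the elementary move $[x,y]$ depends only on the sum $x+y$. Consequently, when $\infty\in\overline{a_0,a_1}$ the point $\infty+a_0+a_1$ lies in the design and
\[
[a_0,a_1]=[\infty,\ \infty+a_0+a_1],
\]
which already starts at $\infty$. With this in hand the induction on the length $k$ is immediate: in the bad case replace $[a_0,a_1]$ by $[\infty,\infty+a_0+a_1]$ and apply the inductive hypothesis (with base point $\infty+a_0+a_1$) to the remaining $[a_1,\ldots,a_k]$; in the good case use $[a_0,a_1]=[\infty,a_0,a_1,\infty]$ from Corollary~\ref{c:transconj} and apply the inductive hypothesis (with base point $\infty$) to $[a_1,\ldots,a_k]$.
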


\begin{proof}
We prove this by induction on the length $k$ of an expression for an element $g:=[a_1,a_2,\ldots,a_k] \in \L$. If $k=2$ then there are two cases to consider. If $\infty \in \overline{a_1,a_2}$ then $[a_1,a_2]=[\infty, \infty+a_1+a_2]$, otherwise $[a_1,a_2]=[\infty,a_1,a_2,\infty]$ by Corollary \ref{c:transconj}. Now assume that $k > 2$. If $\infty \in \overline{a_1,a_2}$ then by induction there exist $l > 0$ and $b_i \in \Omega$ for $1 \leq i \leq l$ such that $[a_2,\ldots, a_k]=[\infty+a_1+a_2,b_1,\ldots,b_l]$ and hence  $g=[\infty,\infty+a_1+a_2,b_1, \ldots, b_l]$. If $\infty \notin \overline{a_1,a_2}$ then there exist $l > 0$ and $b_i \in \Omega$ for $1 \leq i \leq l$ such that $[a_2,\ldots, a_k]=[\infty,b_1,\ldots,b_l]$ so that $g=[\infty,a_1,a_2,\infty, b_1, \ldots, b_l]$. The result follows.
\end{proof}

\begin{Cor}\label{c:lgroup}
Let $\L$ be either of the sets $\L(\De^\ep)$ or $\L(\De^a)$. Then $\L$ is a group. Furthermore, for each point $\infty$, we have $\L=\L_\infty(\De)$.
\end{Cor}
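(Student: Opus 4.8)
The plan is to deduce Corollary~\ref{c:lgroup} directly from Lemma~\ref{l:infstart} together with the defining properties of the sets $\L$, $\L_\infty(\De)$ and the group $\pi_\infty(\De)$. First I would fix a point $\infty$ and observe that Lemma~\ref{l:infstart} says every $g \in \L$ can be written as a move sequence $[\infty, b_1, \ldots, b_l]$ starting at $\infty$; this immediately gives the containment $\L \subseteq \L_\infty(\De)$, and since the reverse containment $\L_\infty(\De) \subseteq \L$ holds by definition, we obtain $\L = \L_\infty(\De)$. This is the second assertion, and I would present it first since the group property follows from it.

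Next, to see that $\L$ is a group, I would argue as follows. Take $g, h \in \L = \L_\infty(\De)$. Apply Lemma~\ref{l:infstart} to write $g = [\infty, b_1, \ldots, b_l]$ and — crucially — apply it again to the element $h$ but with the \emph{starting point} chosen to be $b_l$ (the endpoint of the expression for $g$), so that $h = [b_l, c_1, \ldots, c_j]$. Wait — Lemma~\ref{l:infstart} as stated lets us start at any point $\infty$, but we need $h$ expressed starting at the \emph{terminal} point of $g$'s expression; this is exactly what the lemma provides with $\infty := b_l$. Then the concatenation $[\infty, b_1, \ldots, b_l, c_1, \ldots, c_j]$ is a move sequence representing $gh$ (since elementary moves compose by concatenation whenever consecutive indices agree), and it starts at $\infty$, hence lies in $\L_\infty(\De) = \L$. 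Thus $\L$ is closed under products. Closure under inverses: the reverse sequence $[b_l, b_{l-1}, \ldots, b_1, \infty]$ represents $g^{-1}$ (each elementary move is an involution, $[a,b] = [b,a]$, and reversing a product of involutions inverts it), and by Lemma~\ref{l:infstart} applied once more we may rewrite this so that it starts at $\infty$; hence $g^{-1} \in \L$. The identity is the empty (or trivial closed) move sequence $[\infty,\infty]$-type element, so $\L$ is a subgroup of the relevant symmetric group.

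An alternative and perhaps cleaner route, which I would mention, is: Lemma~\ref{l:infstart} gives $\L \subseteq \L_\infty(\De)$, and conversely $\L_\infty(\De) \subseteq \L$ trivially, so $\L = \L_\infty(\De)$; moreover for \emph{any} point $\infty$ we get $\L = \L_\infty(\De)$, so in particular $\L = \L_\infty(\De) = \pi$-type sets all coincide in the relevant sense — but more to the point, $\L$ is now seen to be closed under the groupoid composition restricted to a fixed object, which is precisely the statement that $\L_\infty(\De)$, a priori only a subset of $\Sym(n)$, is in fact the set of morphisms from $\infty$ to itself composed with morphisms back, i.e. a group. The key input making everything work is that Lemma~\ref{l:infstart} holds for an \emph{arbitrary} choice of starting point, which is what allows concatenation of an expression for $g$ with an expression for $h$ that begins where $g$'s expression ends.

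The main obstacle is essentially bookkeeping: one must be careful that "move sequence" really does compose by concatenation (which follows directly from the definition $[a_0,\ldots,a_k] = [a_0,a_1]\cdots[a_{k-1},a_k]$, requiring only that the index labels match up at the join), and that reversing a sequence yields the inverse permutation (using $[a,b]=[b,a]$ and that these are involutions). There is also a minor point about the identity element lying in $\L$, which is immediate since a length-zero or trivial closed sequence induces the identity permutation. None of these is deep; the substantive content was already done in Lemma~\ref{l:infstart}.
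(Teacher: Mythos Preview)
Your proposal is correct and follows essentially the same approach as the paper: the key step in both is to apply Lemma~\ref{l:infstart} to rewrite $h$ as a move sequence beginning at the terminal point of the chosen expression for $g$, after which concatenation gives closure under products. The only cosmetic differences are that the paper proves the group property first (working directly in $\L$, noting that the reversed sequence already lies in $\L$ without a further appeal to Lemma~\ref{l:infstart}) and then reads off $\L=\L_\infty(\De)$ as an immediate consequence, whereas you establish the equality first; neither ordering changes the substance.
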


\begin{proof}
$\L$ clearly contains the trivial move sequence and $[a_1,a_2,\ldots,a_r]^{-1}=[a_r,a_{r-1},\ldots,a_1]$ for each $[a_1,a_2,\ldots,a_r] \in \L$. It remains to show that $\L$ is closed under composition.

For $1 \leq i \leq k$ and $1 \leq j \leq l$ let $a_i,b_j$ be points and write $g:=[a_1,\ldots,a_k]$ and $h:=[b_1,\ldots,b_l].$ By Lemma \ref{l:infstart}, there exist $s > 0$ and points $c_1, \ldots c_s$ such that $[b_1,\ldots,b_l]=[a_k,c_1,\ldots, c_s]$ so we have $g \cdot h=[a_1,\ldots,a_k,c_1,\ldots,c_s] \in \L,$ as required. 

The last statement follows immediately from Lemma \ref{l:infstart}.
\end{proof}



\begin{proof}[Proof of Theorem B]
First note that, by Corollary \ref{c:lgroup}, $\L(\De^\ep)$ and $\L(\De^a)$ are both groups. In both cases, this group is generated
by all elementary move sequences $[a,b]$. In this first case, $[a,b]=t_{a+b}$ for all $a,b\in\Omega^\ep$ (Lemma \ref{l:transperm}), and since
every $v\in V$ can be written as the sum of two elements in $V^\ep$ (Lemma \ref{l:sumof2}), it follows that
$$\L(\De^\ep)=\langle t_{a+b}\,|\,a,b\in V^\ep\rangle=\langle t_{v}\,|\,v\in V\rangle\cong \Sp_{2m}(2).$$

In the second case, $[a,b]=(a^{t_b}+b^{t_a}, t_{a+b})$, by Lemma \ref{l:transperm}, and so $\L(\De^a)\leq 2^{2m}.\Sp_{2m}(2)$.  Now, it is straightforward to show that for 
every $0\neq a\in V$, there exists $x_a\in V$ such that $\varphi(x_a,a)=1$. One then calculates that $[x_a,x_a+a]=(0,t_a)$ for all $0\neq a\in V$, 
so $\Sp_{2m}(2)\cong\langle [x_v,x_v+v]\,|\, v\in V\rangle\leq \L(\De^a)$. To get the translations of the affine group, we observe that $[0,a]=(a,t_a)$ for
all $a\in V$, so $[0,a][x_a,x_a+a]=(a,1)$. Thus
$$\L(\De^a)\cong 2^{2m}.\Sp_{2m}(2). $$
Now let $\L$ denote $\L(\De^\ep)$ (respectively $\L(\De^a))$ and $\pi$ denote $\pi_\infty(\De^\ep)$ (respectively $\pi_\infty(\De^a)).$ By \cite[Lemma 3.1]{Puzz}, $|\L|=n \cdot |\pi|$ where $n$ is the number of points in the associated design, and since $\pi \subseteq \stab_\L(\infty)$ we must have an equality $\pi=\stab_{\L}(\infty)$. This completes the proof.
\end{proof}








\section{Infinite Families of Completely Transitive Codes}\label{s:codes}
This section is concerned with the $\mathbb{F}_2$-linear codes $C^\ep:=C_{\mathbb{F}_2}(\De^\ep)$ and $C^a:=C_{\mathbb{F}_2}(\De^a)$ associated respectively to the incidence matrices of the designs $\De^\ep$ and $\De^a$ of Theorem A. (Recall that $C_{\mathbb{F}_2}(\mathcal{E})$
 is simply the $\mathbb{F}_2$-rowspan of the the incidence matrix of a design $\mathcal{E}$.)
We first introduce some notation which will allow us to describe elements of $C^\ep$ and $C^a$ succinctly.
For $\epsilon \in \mathbb{F}_2$, let $W^\epsilon$ be the $|\Omega^\epsilon|$-dimensional vector space over $\mathbb{F}_2$ with entries indexed by
$\Omega^\ep$ and $W^a$ be the $|\Omega|$-dimensional vector space over $\mathbb{F}_2$ with entries indexed by
$V$.  Therefore, each element $\alpha_{\S}$  of $W^\ep$ or $W^a$ can be uniquely identified with a subset $\S$ of $\Omega^\ep$ or $V$, that is, $\alpha_\S$ is
the characteristic vector of $\S$. Thus, we note that $\supp(\alpha_\S)=\S$ ($\supp(v)$ denotes the \emph{support} of a vector $v$, that is, the set of
non-zero entries of $v$). Using this notation
$$C^\ep=\langle \alpha_\S \mid \S\in\B^\ep \rangle \mbox{ and } C^a:=\langle \alpha_\T \mid \T\in\B^a \rangle$$
In particular, for $\S \in \B^\ep$ and $\T \in \B^a$,
\begin{equation}\label{sum0} \sum_{\theta_a\in\S}a=0,\end{equation}
and 
\begin{equation}\label{sum01}\sum_{a\in\T}a=0,\,\,\,\,\sum_{a\in\T}\theta_0(a)=0 \end{equation}

\begin{Lem}\label{l:sum} 
The respective expression \eqref{sum0}, \eqref{sum01} holds for all $\alpha_\S \in C^\ep$, $\alpha_\T \in C^a$, where $\S \subseteq \Omega^\ep$ and $\T \subseteq V$.
\end{Lem}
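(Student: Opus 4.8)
The plan is to replace each ``sum over the support of a codeword'' by a genuine $\mathbb{F}_2$-linear map on the ambient space, and then to invoke the fact that a code is the span of its generators. First I would treat $C^\ep$. Define $\psi\colon W^\ep\to V$ by $\psi(\alpha):=\sum_{\theta_a\in\Omega^\ep}\alpha_{\theta_a}\,a$, where $\alpha_{\theta_a}$ denotes the coordinate of $\alpha$ indexed by $\theta_a$. Since the coordinates $\alpha_{\theta_a}$ lie in $\mathbb{F}_2$ and $V$ is an $\mathbb{F}_2$-vector space, $\psi$ is $\mathbb{F}_2$-linear. For a generator $\alpha_\S$ ($\S\in\B^\ep$) one has $\psi(\alpha_\S)=\sum_{\theta_a\in\S}a$, so \eqref{sum0} says precisely that every generator of $C^\ep$ lies in $\Ker\psi$. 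Because $C^\ep=\langle\alpha_\S\mid\S\in\B^\ep\rangle$, it follows that $C^\ep\subseteq\Ker\psi$, which is the first assertion.

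Next I would handle $C^a$ in the same way, using two maps at once: define $\psi_1\colon W^a\to V$ by $\psi_1(\alpha):=\sum_{a\in V}\alpha_a\,a$ and $\psi_2\colon W^a\to\mathbb{F}_2$ by $\psi_2(\alpha):=\sum_{a\in V}\alpha_a\,\theta_0(a)$. Both are $\mathbb{F}_2$-linear ($\psi_2$ is a linear functional since $\theta_0$ takes values in $\mathbb{F}_2$), and on a generator $\alpha_\T$ ($\T\in\B^a$) they evaluate to $\sum_{a\in\T}a$ and $\sum_{a\in\T}\theta_0(a)$ respectively. The defining condition of $\B^a$, equivalently \eqref{sum01}, says exactly that each $\alpha_\T$ lies in $\Ker\psi_1\cap\Ker\psi_2$; since these generate $C^a$, we conclude $C^a\subseteq\Ker\psi_1\cap\Ker\psi_2$, which is the pair of identities in \eqref{sum01}.

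The only thing worth noting—and the place where one might worry—is why summing over supports behaves linearly: for characteristic vectors one has $\supp(\alpha_\S+\alpha_{\S'})=\S\,\triangle\,\S'$, and $\sum_{\theta_a\in\S\triangle\S'}a=\sum_{\theta_a\in\S}a+\sum_{\theta_a\in\S'}a$ holds in $V$ precisely because the contribution of $\S\cap\S'$ is counted twice and hence cancels in characteristic $2$. Packaging this observation into the linear maps $\psi$, $\psi_1$, $\psi_2$ makes it unnecessary to carry out any inclusion--exclusion explicitly, so there is really no substantive obstacle: once linearity is in place the lemma is immediate from $C^\ep$ (resp.\ $C^a$) being spanned by the $\alpha_\S$ (resp.\ $\alpha_\T$).
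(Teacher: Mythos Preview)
Your proof is correct and is essentially the same argument as the paper's: both exploit that, in characteristic~$2$, the map ``sum of indices in the support'' is $\mathbb{F}_2$-linear (equivalently, that $\sum_{\theta_a\in\S\triangle\S'}a=\sum_{\theta_a\in\S}a+\sum_{\theta_a\in\S'}a$), so the identities pass from generators to the whole span. The only difference is cosmetic: you package the computation as $C^\ep\subseteq\Ker\psi$, whereas the paper verifies the symmetric-difference identity directly.
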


\begin{proof} 
Let $\alpha_\X$ and $\alpha_\Y$ be two vertices in $W^\ep$ with $$\sum_{\theta_a\in\X}a = \sum_{\theta_a\in\Y}a=0.$$ As $\supp(\alpha_\X+\alpha_\Y)=\X\vartriangle\Y$, the symmetric difference of $\X$ and $\Y$, it follows that
$$\sum_{\theta_a\in\X\vartriangle\Y}a=\sum_{\theta_a\in\X\vartriangle\Y}a+\sum_{\theta_a\in\X\cap\Y}2a=\sum_{\theta_a\in\X}a+\sum_{\theta_a\in\Y}a=0.$$
Since \eqref{sum0} holds for all $\alpha_\S$ such that $\S\in\B^\ep$, the assertion now follows. An analogous argument shows that \eqref{sum01} holds
for all $\alpha_{\T}\in C^a$.
\end{proof}

\begin{Cor}\label{c:dist} $C^\ep$ and $C^a$ consist entirely of codewords of even weight and both codes have minimum distance $d=4$. Moreover, the sets of codewords of weight $4$ are in bijection with $\B^\ep$ and $\B^a$ respectively and 
$$C^\ep=\langle \alpha_\S \mid |\S|=4, \sum_{\theta_a\in\S} a = 0 \rangle, \mbox{ and } C^a=\langle \alpha_\T \mid |\T|=4, \sum_{a\in\T} a = 0, \sum_{a \in \T} \theta_0(a)= 0 \rangle.$$
\end{Cor}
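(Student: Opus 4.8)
The plan is to derive the whole statement from Lemma~\ref{l:sum} together with a one-line parity observation; no serious difficulty is anticipated. First I would dispose of the even-weight claim: the map sending a binary vector to the sum of its coordinates in $\mathbb{F}_2$ is $\mathbb{F}_2$-linear and equals the weight mod~$2$, and it vanishes on every generator $\alpha_\S$ with $\S\in\B^\ep$ (resp. $\alpha_\T$ with $\T\in\B^a$) since each line has four points. Hence it vanishes on all of $C^\ep$ and $C^a$, so every codeword has even weight.

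Next I would pin down the minimum distance. Since $m\geq 3$ the sets $\B^\ep,\B^a$ are nonempty, so each code contains a weight-$4$ codeword and $d\leq 4$. For the reverse inequality, even weight already excludes weights $1$ and $3$, so it suffices to rule out weight~$2$. If some $\alpha_\S\in C^\ep$ had weight $2$, say $\S=\{\theta_a,\theta_b\}$, then Lemma~\ref{l:sum} (via \eqref{sum0}) would force $a+b=0$, i.e. $a=b$ in characteristic $2$, contradicting $\theta_a\neq\theta_b$; here one uses that $a\mapsto\theta_a$ is injective, which is immediate from the non-degeneracy of $\varphi$. The identical argument, using the first relation of \eqref{sum01}, handles $C^a$. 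Hence $d=4$ in both cases.

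Finally I would identify the weight-$4$ codewords. The assignment $\S\mapsto\alpha_\S$ is injective (one recovers $\S=\supp(\alpha_\S)$), and each line of $\B^\ep$ (resp. $\B^a$) is a weight-$4$ codeword by construction, so it remains to see that every weight-$4$ codeword arises this way. Given such $\alpha_\S\in C^\ep$, write $\S=\{\theta_{v_1},\theta_{v_2},\theta_{v_3},\theta_{v_4}\}$ with the $v_i\in V^\ep$ pairwise distinct (again by injectivity of $a\mapsto\theta_a$); Lemma~\ref{l:sum} gives $v_4=v_1+v_2+v_3$, so $\S$ is visibly of the form defining $\B^\ep$ (and $v_1+v_2+v_3=v_4\in V^\ep$), whence $\S\in\B^\ep$. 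For $C^a$ one argues identically, now invoking both relations of \eqref{sum01}: a weight-$4$ codeword $\alpha_\T$ with $\T=\{v_1,v_2,v_3,v_4\}$ satisfies $v_4=v_1+v_2+v_3$ and $\theta_0(v_1)+\theta_0(v_2)+\theta_0(v_3)=\theta_0(v_1+v_2+v_3)$, which is exactly the condition defining $\B^a$. This yields the claimed bijections, and moreover shows that $\B^\ep$ (resp. $\B^a$) is precisely the set of $4$-subsets of $\Omega^\ep$ (resp. $V$) satisfying the displayed linear relations; so the spanning sets $\langle\alpha_\S\mid |\S|=4,\ \sum_{\theta_a\in\S}a=0\rangle$ and $\langle\alpha_\T\mid |\T|=4,\ \sum_{a\in\T}a=0,\ \sum_{a\in\T}\theta_0(a)=0\rangle$ agree with $C^\ep$ and $C^a$ respectively. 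The only point needing a moment's care is the injectivity of $a\mapsto\theta_a$ — required so that a $4$-element subset of $\Omega^\ep$ genuinely corresponds to four distinct vectors of $V^\ep$ — but this is immediate, and I foresee no real obstacle.
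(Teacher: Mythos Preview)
Your proof is correct and follows essentially the same approach as the paper: parity for even weight, Lemma~\ref{l:sum} to exclude weight~$2$, and Lemma~\ref{l:sum} again to identify the weight-$4$ codewords with the lines of $\B^\ep$ (resp.\ $\B^a$). You are simply a bit more explicit than the paper in spelling out why $d\leq 4$ and in flagging the injectivity of $a\mapsto\theta_a$.
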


\begin{proof} We only treat the code $C^a$, since a similar argument also holds for $C^\ep$. As $C^a$ is generated by codewords with weight $4$,
it follows that it consists entirely of codewords with even weight.  
Suppose there exists $\alpha_\T\in C^\ep$ with weight $2$, so $\T=\{v,w\}$ for some $v\neq w$.  Then
Lemma \ref{l:sum} implies that $v+w=0$, a contradiction, hence $d=4$. Now let $\alpha_\T$ be any weight $4$ vertex in
$W^a$ that satisfies \eqref{sum01}, with $\T=\{v_1,v_2,v_3,v_4\}$.
Then \eqref{sum01} implies that $v_4=v_1+v_2+v_3$ and $\sum_{i=1}^3\theta_0(v_i)=\theta_0(\sum_{i=1}^3v_i)$. In particular, 
$\T\in\mathcal{B}^a$. Now, by Lemma \ref{l:sum}, 
all codewords of weight $4$ satisfy \eqref{sum01}, which proves the second statement.
\end{proof}





\subsection{Covering radius and complete transitivity}
We next give succinct descriptions of the codewords of $C^\ep$ and $C^a$.

\begin{Lem}\label{l:codechar}
The following hold:
\begin{itemize}
\item[(a)] For $m \geq 4$ and $\alpha_\S\in W^\ep$, $\alpha_\S\in C^\ep$
if and only if $|\S|=2k$ for some $k\geq 2$ and $\sum_{\theta_a \in \S} a = 0$.
\item[(b)] Let $m \geq 3$ and $\alpha_\S \in W^a$. Then $\alpha_\S \in C^a$ if and only if $|\S|=2k$ for some $k \geq 2$, $\sum_{s \in \S} s = 0$ and $\sum_{s \in \S} \theta_0(s)= 0$.
\end{itemize}
\end{Lem}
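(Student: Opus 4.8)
The plan is to treat the ``only if'' direction as immediate and the ``if'' direction by induction on $|\S|$. For ``only if'': if $\alpha_\S\in C^\ep$ (resp. $C^a$), then Corollary~\ref{c:dist} gives that $\alpha_\S$ has even weight and Lemma~\ref{l:sum} gives the stated linear condition(s) on $\S$; since the minimum distance is $4$ there is no nonzero codeword of weight $2$, so $|\S|\neq 2$ and hence $|\S|=2k$ with $k\geq 2$ (reading the statement as also permitting $\S=\emptyset$). The real content is the converse.

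For the converse I would induct on $|\S|=2k$. The base case $k=2$ is Corollary~\ref{c:dist}: a valid set of size $4$ is a line of $\B^\ep$ (resp. $\B^a$). For $k\geq 3$, call a $3$-subset of $\S$ \emph{good} if it lies in $\O_\ep^\ep$ (resp. $\O_0$); by Theorem~\ref{t:3sets} (resp. Theorem~\ref{t:3sets2}) this amounts, for $\{\theta_a,\theta_b,\theta_c\}$, to $\varphi(a,b)+\varphi(a,c)+\varphi(b,c)=0$, and a good triple has a unique ``completion'' making it a line. The mechanism of the induction is this: if a line $\ell$ meets $\S$ in exactly three points, then $\S':=\S\vartriangle\ell$ is again valid (the defining conditions are linear and hold for the valid set $\ell$, so they survive symmetric difference) with $|\S'|=2k-2$, so $\alpha_{\S'}\in C$ by induction and $\alpha_\S=\alpha_{\S'}+\alpha_\ell\in C$; similarly, if $\ell\subseteq\S$, the valid set $\S\setminus\ell$ has size $2k-4$ and induction applies.

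Thus it suffices, for each valid $\S$ with $|\S|\geq 6$, to produce either such an $\ell$ or a way to first move $\S$ to an equally large valid set that admits one, and there are three cases. If $\S$ has a good triple whose completion lies outside $\S$, its line meets $\S$ in three points and we are done. If every good triple of $\S$ has completion inside $\S$, then $\S$ contains a line $\ell$; here $k=3$ is impossible, since then $\S\setminus\ell$ would be a valid set of size $2$, of which there are none, so $k\geq 4$ and $\ell\subseteq\S$ works. The remaining case, $\S$ with \emph{no} good triple, is the crux. First I would prove a size bound: fixing $\theta_{a_0}\in\S$, the absence of good triples forces the vectors $\{a_0+b:\theta_b\in\S\setminus\{\theta_{a_0}\}\}$ to be pairwise non-orthogonal under $\varphi$, and a rank computation on their Gram matrix (an all-ones matrix minus the identity, as in the computation preceding \cite[Lemma~7.7B]{Permutation} and in the spirit of Lemma~\ref{l:linind}) gives $|\S|\leq 2m+2$. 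Then, since $|\S|$ is tiny next to $2^{2m}$, a counting argument produces a line $\ell$ meeting $\S$ in exactly two points $p,q$ such that $\S':=\S\vartriangle\ell$ acquires a good triple: picking $r,s\in\S\setminus\{p,q\}$ (possible since $|\S|\geq 6$) with $r+s$ independent of $p+q$, the new point $x$ of $\ell$ must lie in the intersection of two affine hyperplanes — one making $\{p,q,x,p+q+x\}$ a line, the other making $\{r,s,x\}$ good — and this intersection still leaves room to avoid the $O(|\S|)$ values of $x$ that would pull a new point into $\S$ or cause a degeneracy. Now $|\S'|=2k$ and $\S'$ has a good triple, so $\S'$ falls into one of the first two cases, which (via the induction hypothesis on the strictly smaller set produced there) gives $\alpha_{\S'}\in C$, hence $\alpha_\S=\alpha_{\S'}+\alpha_\ell\in C$. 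The scheme is the same for (a) and (b); the only differences are which orbit is ``good'' and, in (b), carrying the extra condition $\sum_{s\in\S}\theta_0(s)=0$ through the bookkeeping.

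The step I expect to be the main obstacle is exactly this last case: verifying that a set with no good triple can always be nudged, by a single line-move overlapping it in two points, into one that contains a good triple. The size bound itself is routine, but the nudge must simultaneously keep the two new points off $\S$, keep them distinct, and genuinely create a good triple, and one must check that the affine subspaces involved have the expected codimension (barring sporadic coincidences such as $p+q=r+s$, which are dodged by re-choosing $p,q,r,s$). At the smallest admissible values of $m$, where the counting is tightest, it may be cleanest to finish the no-good-triple case by a direct finite verification, as the paper does for the related orbit statements in Theorems~\ref{t:3sets} and~\ref{t:3sets2}.
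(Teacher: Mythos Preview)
Your inductive framework and the handling of Cases 1 and 2 match the paper. The divergence is in Case 3 (no good triple), and here the paper takes a different and rather more economical route.

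For (a), the paper does not attempt a uniform nudge. It first establishes $k=3$ directly, via an explicit choice of $x$ using Lemma~\ref{l:linind} --- this \emph{is} your nudge, but with the conditions written down and verified rather than argued by counting. Then for $k>3$ with no good triple it invokes Lemma~\ref{l:sumof2}: any four $\theta_x,\theta_y,\theta_z,\theta_s\in\S$ satisfy $x+y+z+s=t+u$ for some $t,u\in V^\ep$, and the no-good-triple hypothesis forces $\{t,u\}\cap\{x,y,z,s\}=\emptyset$, so $\S':=\{\theta_x,\theta_y,\theta_z,\theta_s,\theta_t,\theta_u\}$ is a valid $6$-set; by the already-proved $k=3$ case $\alpha_{\S'}\in C^\ep$, and $|\S\vartriangle\S'|<|\S|$ closes the induction. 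This bootstrap via Lemma~\ref{l:sumof2} sidesteps both your Gram-matrix size bound and the nudge-counting entirely. For (b) the paper is structurally different again: it never cases on good triples, but directly picks four linearly independent elements of $\S$, uses pigeonhole to find two with equal $\theta_0$-value, and invokes \cite[Lemma~7.7B]{Permutation} to build a reducing line in one or two moves.

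Your route is sound, and the bound $|\S|\leq 2m+2$ from the Gram matrix $J-I$ is a nice observation the paper does not need. But the obstacle you flag is real: the constraint $\theta_0(x)=\ep$ is quadratic, not affine, so ``intersection of two affine hyperplanes with room to avoid $O(m)$ points'' requires a Lemma~\ref{l:linind}-type nonconstancy argument on the resulting quadric slice, and the margin must be checked against the thresholds $m\geq 4$ (resp.\ $m\geq 3$). The paper's bootstrap through the $k=3$ case via Lemma~\ref{l:sumof2} is the cleaner way to avoid this.
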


\begin{proof}
In both (a) and (b), the forward implication is a consquence of Lemma \ref{l:sum} and Corollary \ref{c:dist}, and the reverse
implication for $k=2$ also follows from Corollary \ref{c:dist}. We first prove the reverse implication for (a).
Suppose we have verified the claim when $k=3$ and assume (by induction) that the claim holds for all $\S$ with $|\S|=2\ell$ and $\ell < k$. Write $\alpha:=\alpha_\S$ for short and assume that $k > 3$. If there exist $\theta_x,\theta_y,\theta_z \in \S$ such that $\theta_0(x+y+z)= \epsilon$ then $\alpha_{\S'}\in C^\ep$ where 
$\S'=\{\theta_x,\theta_y,\theta_z,\theta_{x+y+z}\}$. Since $|\supp(\alpha+\alpha_{\S'})|  <2k$,
it follows from Lemma \ref{l:sum} that $\alpha+\alpha_{\S'}$ statisfies the inductive hypothesis.  Thus $\alpha+\alpha_\S'\in C^\ep$, and so $\alpha\in C^\ep$.

We are thus reduced to the case where  
\begin{equation}\label{transprop}
\theta_0(x+y+z)= 1-\epsilon \mbox{ for all } \theta_x,\theta_y,\theta_z \in \S.
\end{equation}

Now, for any $\theta_x,\theta_y,\theta_z,\theta_s \in \S$, there exist $t,u \in V^\epsilon$ such that $x+y+z+s=t+u$ by Lemma \ref{l:sumof2}.
Furthermore, by \eqref{transprop}, we must have $\{x,y,z,s\} \cap \{t,u\} = \emptyset.$ By induction,
$\alpha_{\S'}\in C^\ep$ where $\S'=\{\theta_x,\theta_y,\theta_z,\theta_s,\theta_t,\theta_u\}$.
Moreover, $|\supp(\alpha+\alpha_{\S'})| < |\S|$, therefore, as before, $\alpha\in C^\ep$.

It thus remains to verify the claim in the case where $k=3$. Since $6 > 4=2^2$ at least 3 of the vectors associated with the forms in $\S$ are linearly independent. Since the sum of 6 distinct vectors in $\mathbb{F}_2^3$ cannot be $0$, at least 4 of the vectors associated with the forms in $\S$ are linearly independent. Further, an identical argument to that given in the first paragraph shows that we may assume  (\ref{transprop}) holds for $\S$.

Let $\{a_1,a_2,a_3,a_4\}$ be the four linearly independent vectors, so that $\S=\{\theta_{a_1},\theta_{a_2},\theta_{a_3},\theta_{a_4},\theta_r,\theta_s\}$ for some $\theta_r,\theta_s\in\Omega^\ep$. By the pigeonhole principle there exist two equal elements in the set $\{\varphi(a_1,a_3),\varphi(a_2,a_3),\varphi(a_4,a_3)\}$, $\varphi(a_1,a_3)$ and $\varphi(a_2,a_3)$ say. By Lemma \ref{l:linind} we may choose $$x \in L(a_1+a_2,\theta_0(a_1+a_2)) \cap L(a_3,\varphi(a_1,a_3)+1) \cap L(a_3+a_4,\theta_0(a_3+a_4)),$$ so that $\theta_0(x)=\ep$. This implies that $x \notin \{a_1,a_2\}$ and since $$\theta_0(x+a_1+a_2)=\theta_0(x)+\theta_0(a_1+a_2)+\varphi(x,a_1+a_2)=\theta_0(x)=\ep,$$ $\S'=\{\theta_x,\theta_{x+a_1+a_2},\theta_{a_1},\theta_{a_2}\}$ is the support of some codeword $\alpha_{\S'}$. Now, as \eqref{transprop} holds, $0$ or $1$ elements in the set $\{x,x+a_1+a_2\}$ lie in $\{a_3,a_4,r,s\}$. If it is $1$ then we must have $\alpha+\alpha_{\S'} \in C^\ep$, so that $\alpha \in C^\ep$. If it is $0$ then $$\supp(\alpha+\alpha_{\S'})=\{\theta_x,\theta_{x+a_1+a_2},\theta_{a_3},\theta_{a_4},\theta_r,\theta_s\},$$ and \begin{align*}\theta_0((x+a_1+a_2)+r+s)&=\theta_0(x+a_3+a_4)\\
&=\theta_0(x)+\theta_0(a_3+a_4)+\varphi(x,a_3+a_4)=\theta_0(x)=\ep\end{align*} so that $\alpha+\alpha_{\S'}=\alpha_\T+\alpha_\U$ where $$\T:=\{\theta_{x+a_1+a_2},\theta_r,\theta_s,\theta_{x+a_1+a_2+r+s}\} \mbox{ and } \U:=\{\theta_x,\theta_{a_3},\theta_{a_4},\theta_{x+a_3+a_4}\}.$$ Clearly both  $\alpha_\T$ and $\alpha_\U$ lie in $C^\ep$, so that  $\alpha \in C^\ep$ in this case also. This completes the proof of (a).

We next prove the reverse implication for (b). Note first that $$\sum_{s \in \S} \theta_0(s)=\sum_{s \in \S'} \theta_0(s)=0 \Rightarrow \sum_{s \in \S \triangle \S'} \theta_0(s)=0.$$ The proof is again by induction on $k$, where we assume that $k \geq 3$. Since $\sum_{s \in \S} s = 0$, $\langle s \mid s \in \S \rangle$ is a subspace of dimension at least 4 and we may pick be 4 linearly independent vectors in $\S$,  $a_1,a_2,a_3,a_4$ say. By the pigeonhole principle, there exist two elements $s,t  \in \{a_1,a_2,a_3,a_4\}$, with $\theta_0(s)=\theta_0(t)$. Without loss of generality, $s=a_1$, $t=a_2$. By \cite[Lemma 7.7B]{Permutation}, we may choose $$x \in L(a_1+a_2,\varphi(a_1,a_2)) \cap L(a_3+a_4,\varphi(a_3,a_4))$$ with the property that $\theta_0(x) \neq \theta_0(a_1)$. Note in particular that this implies that $x \notin \{a_1,a_2\}$ and $$\theta_0(x+a_1+a_2)=\theta_0(x)+\theta_0(a_1)+\theta_0(a_2).$$

If $|\S \triangle \{x,a_1,a_2,x+a_1+a_2\}| < 2k$, then the lemma holds by induction. Otherwise we have $x \notin \{a_3,a_4\}$ and 
$$\theta_0(x+a_3+a_4)=\theta_0(x)+\theta_0(a_3)+\theta_0(a_4),$$
and hence  $|(\S \triangle \{x,a_1,a_2,x+a_1+a_2\}) \triangle \{x,a_3,a_4,x+a_3+a_4\}| < 2k$ and the lemma holds by induction in this case also. The proof is complete.

\end{proof}

We next show how to identify a certain code constructed in \cite{BRZ} with our code $C^a$. This will allow us to prove Theorem C (b). We begin by reviewing the construction from \cite{BRZ}. Let $\textbf{H}$ be the $2m \times (2^{2m}-1)$ parity check matrix of the $[2^{2m}-1,2^{2m}-2m-1,3]$ linear Hamming code, whose
whose columns $c_1,\ldots, c_{2^{2m}-1}$ correspond to the non-zero elements of $(\mathbb{F}_2)^{2m}$. Let $q:V \rightarrow \mathbb{F}_2$ be the ``bent function'' defined by \[
 q(v)=\left\{\begin{array}{ll}
           1, & \textrm{ if $wt(v) \equiv 2,3 \mod 4$;} \\
           0, & \textrm{ otherwise.}
          \end{array}\right.
\]

In \cite{BRZ} it is shown that $q$ is quadratic, that is, $q(v+w)+q(v)+q(w)$ is a bilinear form on $V$, and that $q(v)=vQv^T$ where $Q$ is the all ones upper triangular matrix.

Let $x$ be the row vector of length $2^{2m}-1$ with $x_i:=q(c_i)$ and form a new matrix $\textbf{H}_x$ from $\textbf{H}$ by letting $x$ be an additional row. 
Now let $C_x$ be the code that has $\textbf{H}_x$ as its parity check matrix, and let $C$ be the \emph{extended code} of $C_x$, that is, the code obtained from 
$C_x$ via the addition of an extra coordinate so that the sum of the coordinates of the extended codeword is zero.  Thus codewords in $C$ may be identified with vectors 
$\alpha_\S$ ($\S \subseteq V$) with the property that $$|\S|\textnormal{ is even, }\sum_{v \in \S} v=0 \mbox{ and } \sum_{v \in \S} q(v)=0.$$

Now, there exists an invertible $2m \times 2m$ matrix $A$ such that $AeA^T=Q$ where $e$ is the matrix of Section \ref{s:action} used to define the form $\theta_0$. We claim that the map $\rho_A: C \rightarrow C^a$ which sends a codeword $\alpha_\S \in C'$ (some $\S \subseteq V$) to $\alpha_\T$ where $\T:=\{sA \mid s \in \S\}$ defines an isomorphism of codes. Indeed, if $\alpha_\S \in C$ then $|\S|=|\T|$ is even, 
$$\sum_{u \in \T} u = \sum_{v \in \S} vA= (\sum_{v \in \S} v)A=0, \mbox{ and }$$

$$\sum_{u \in \T} \theta_0(u) = \sum_{v \in \S} vAeA^Tv^T = \sum_{v \in \S} q(v)=0.$$

Hence, by Lemma \ref{l:codechar} (b), $C^a$ is isomorphic to $C$, and using results in \cite{BRZ} we obtain:

\begin{Thm}\label{t:affcode}
$C_{\mathbb{F}_2}(\De^a)$ is a completely transitive $[2^{2m},2^{2m}-(2m+2),4]$ code with covering radius $4$ and intersection array $$(2^{2m},2^{2m}-1,2^{2m-1},1;1,2^{2m-1},2^{2m}-1,2^{2m}).$$
\end{Thm}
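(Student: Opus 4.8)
The plan is to reduce the whole statement to the corresponding facts about the code $C$ built from the bent function $q$, which was just shown to be isomorphic to $C^a$ via the bijection $\rho_A$, and then to quote the relevant theorem of Borges, Rif\`a and Zinoviev \cite{BRZ}. So the first step is to check that $\rho_A$ is genuinely a coordinate relabelling: it is induced by the permutation $v\mapsto vA$ of the index set $V$, hence lies in the subgroup $L\cong\Sym(2^{2m})$ of $\Aut(\Gamma)$ from Section~\ref{s:back}, and therefore $C^a$ is obtained from $C$ simply by permuting coordinates. Every quantity in the statement — length, dimension, minimum distance, covering radius, and the intersection array — is invariant under such a relabelling; moreover, if $C$ is $X$-completely transitive then conjugating its distance partition by $\rho_A$ gives the distance partition of $C^a$, with each part still a single $X^{\rho_A}$-orbit, so $C^a$ is $X^{\rho_A}$-completely transitive. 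Hence it suffices to prove the theorem for $C$.

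Next I would pin down the elementary invariants of $C$. Its length is $2^{2m}$ by construction. For the dimension, the redundancy of $C$ is the rank of $\textbf{H}_x$ plus one for the overall parity check coming from the extension: the $2m$ rows of $\textbf{H}$ are independent (they are a parity check matrix of a Hamming code), the extra row $x$ is independent of them because $q$ is a genuine quadratic (bent) function, so is not the restriction to $V\setminus\{0\}$ of any $\mathbb{F}_2$-linear functional, and the extension contributes one further independent check; thus $\dim C = 2^{2m}-(2m+2)$. For the minimum distance, $d=4$ is immediate from Corollary~\ref{c:dist} transported back along $\rho_A$ (alternatively, $C_x$ is a subcode of a Hamming code, so has minimum distance at least $3$, which the bent row keeps equal to $3$, and extension makes it $4$).

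It then remains to read off the covering radius $\rho=4$, the intersection array $(2^{2m},2^{2m}-1,2^{2m-1},1;1,2^{2m-1},2^{2m}-1,2^{2m})$, and complete transitivity directly from \cite{BRZ}: the code $C$ constructed in the paragraph preceding the theorem is exactly the extended code studied there, obtained from the bent function $q$ with $q(v)=1$ precisely when $wt(v)\equiv 2,3\pmod 4$, and the cited results establish that this family of codes is completely transitive with this covering radius and intersection array (equivalently, that the associated coset graph is distance-regular with these parameters). Combining this with the first two paragraphs yields the theorem.

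The main obstacle is not mathematical but one of careful bookkeeping: one must make sure that the code $C$ as assembled here is literally the code for which \cite{BRZ} proves complete transitivity and computes the intersection array — in particular that our normalisation of the bent function and our range $m\geq 3$ match theirs — and that the reduction step is legitimate, i.e. that an invertible $A$ with $AeA^T=Q$ really exists (a standard consequence of the classification of quadratic forms over $\mathbb{F}_2$, but worth stating explicitly since it is what makes $\rho_A$ a well-defined isometry of the whole code). Once these alignments are confirmed, the proof is just the chain "BRZ $\Rightarrow$ property holds for $C$ $\Rightarrow$ property holds for $C^a$".
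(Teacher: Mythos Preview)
Your proposal is correct and follows essentially the same approach as the paper: the paper establishes the isomorphism $\rho_A$ between $C^a$ and the BRZ code $C$ in the paragraphs immediately preceding the theorem, and then the proof itself is the single line ``This follows from \cite[Theorem 2.4]{BRZ}.'' Your version simply unpacks more of the bookkeeping (that $\rho_A$ is a coordinate permutation, that the listed invariants transfer, and independent checks on length/dimension/minimum distance), but the logical route is identical.
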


\begin{proof}
This follows from \cite[Theorem 2.4]{BRZ}.
\end{proof}

We are left with the task of proving Theorem C (a). Recall from Section \ref{s:back} the notation $$C^\ep_i:=\{\beta \in W^\epsilon \mid \min_{\alpha \in C^\ep} d(\beta,\alpha) = i\}.$$ Our next result shows that $C^\ep_i = \emptyset$ for all $i \geq 4$ (so $C^\ep_i$ has covering radius 3) from which we can quickly deduce that $C^\ep$ is a completely transitive code.

\begin{Prop}\label{p:covC}
Let $m \geq 4$ and $\epsilon \in \mathbb{F}_2$. For each $\alpha_\S \in W^\ep$ with $\S:=\supp(\alpha_\S)$ and $v:=\sum_{a \in \S} a$, one of the following holds: 
\begin{itemize}
\item[(i)] $|\S|$ is even, $v = 0$ and $\alpha_\S \in C^\ep_0$;
\item[(ii)] $|\S|$ is odd, $v \in V^\ep$ and  $\alpha_\S\in C^\ep_1$;
\item[(iii)] $|\S|$ is even, $v \neq 0$ and $\alpha_\S \in C^\ep_2$;
\item[(iv)] $|\S|$ is odd, $v \in V^{1-\ep}$ and  $\alpha_\S \in C^\ep_3$.
\end{itemize}
Consequently, $C^\ep$ has covering radius 3.
\end{Prop}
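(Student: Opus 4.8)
The plan is to classify each vertex $\alpha_\S \in W^\ep$ according to the pair $(|\S| \bmod 2, v)$ where $v := \sum_{a \in \S} a$, and to show that this pair determines the distance $d(\alpha_\S, C^\ep)$ exactly. First I would record the basic observation that adding a single coordinate to $\S$ (i.e. moving to an adjacent vertex in the Hamming graph) flips the parity of $|\S|$ and changes $v$ by some element $a \in \Omega^\ep$, hence changes $v$ by an element of $V^\ep$; more generally, moving distance $j$ from $\alpha_\S$ changes $v$ by a sum of at most $j$ elements of $V^\ep$, and changes the parity of $|\S|$ iff $j$ is odd. Combined with Lemma~\ref{l:codechar}(a), which says $\alpha_\T \in C^\ep$ iff $|\T|$ is even and $\sum_{a\in\T} a = 0$, this immediately gives lower bounds on distances: if $|\S|$ is odd then $d(\alpha_\S,C^\ep)\geq 1$; if $v\neq 0$ then $d(\alpha_\S,C^\ep)\geq 1$, and if moreover $|\S|$ is even and $v\neq 0$ then since any distance-$1$ neighbour has odd size we need $d\geq 2$; and crucially, if $|\S|$ is odd with $v \in V^{1-\ep}$, then reaching a codeword requires an \emph{even} number $j$ of steps (to fix parity) is impossible since parity-flip forces $j$ odd — wait, more carefully: a codeword has even size, so $j$ must be odd; but each of the $j\leq$ steps changes $v$ by an element of $V^\ep$, and an odd number of elements of $V^\ep$ sums to an element of $V^\ep$ (this is exactly the content of the decomposition lemmas, e.g. reading Corollary~\ref{c:sumof2} / the fact that $\theta_0$ is additive-plus-bilinear), so after $j=1$ steps $v$ becomes $v + (\text{elt of }V^\ep)$; for this to be $0$ we'd need $v \in V^\ep$, contradiction; for $j=3$, again the shift lies in $V^\ep$ after summing three elements of $V^\ep$? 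No — three elements of $V^\ep$ need not sum into $V^\ep$. So the parity-of-$\theta_0(v)$ bookkeeping must be done with care: the relevant invariant is really that one step changes $(\,|\S|\bmod 2,\ \theta_0(v) + \text{(correction)}\,)$ in a controlled way. I would set up a clean "potential function" argument: define $\Phi(\alpha_\S) \in \{0,1,2,3\}$ by the four cases (i)–(iv), show $\alpha_\S \in C^\ep \iff \Phi = 0$, show $|\Phi(\alpha_\S) - \Phi(\alpha_{\S'})| \leq 1$ whenever $\alpha_\S,\alpha_{\S'}$ are adjacent, and hence $d(\alpha_\S, C^\ep) \geq \Phi(\alpha_\S)$.

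For the matching upper bounds I would exhibit explicit short paths to $C^\ep$: in case (ii), adding the single coordinate $\theta_v$ (legal since $v \in V^\ep$) produces a set of even size summing to $0$, hence in $C^\ep$ by Lemma~\ref{l:codechar}(a), so $d \leq 1$; in case (iii), by Lemma~\ref{l:sumof2} write $v = x + y$ with $x,y \in V^\ep$ distinct, and adjust $\S$ by the two coordinates $\theta_x,\theta_y$ — one checks the symmetric difference has even size and sums to $0$ (handling the bookkeeping when $\theta_x$ or $\theta_y$ already lies in $\S$, which only shortens the path), so $d \leq 2$; in case (iv), use Corollary~\ref{c:sumof2} to write $v \in V^{1-\ep}$ as a sum of three distinct elements of $V^\ep$ and modify $\S$ by those three coordinates, giving $d \leq 3$. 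Each of these upper bounds combined with the $\Phi$-lower bound pins down the distance exactly, and since cases (i)–(iv) exhaust all vertices (every $\alpha_\S$ falls into exactly one class) and the maximal value attained is $3$, the covering radius is $3$.

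The main obstacle I anticipate is the lower bound in case (iv), i.e. showing that an odd-size set $\S$ with $\sum a \in V^{1-\ep}$ genuinely cannot be brought to a codeword in fewer than $3$ steps. One step gives an even-size set with sum in $v + V^\ep$; for this to be a codeword we'd need $v \in V^\ep$, false, so $d \geq 2$. Two steps give an odd-size set, never a codeword, so that does not help, and in fact one must rule out $d = 2$ differently: $d\geq 3$ follows because to reach $C^\ep$ we need even final size, so an odd number of steps, and $d=1$ is already excluded, leaving $d \geq 3$. This is where the clean formulation via the potential $\Phi$ and the adjacency inequality $|\Delta\Phi|\le 1$ pays off: it reduces everything to (a) verifying $\Phi = 0$ characterizes $C^\ep$ — immediate from Lemma~\ref{l:codechar}(a) once one notes an odd-size set can never have sum $0$ since... actually an odd-size subset of $\Omega^\ep$ can have sum $0$, so the parity condition in Lemma~\ref{l:codechar}(a) is the real content and must be cited carefully — and (b) a short case-check of the four-by-four table of how $(|\S|\bmod 2, \theta_0(v)\text{-type})$ transitions under adding one coordinate $\theta_a$, $a \in V^\ep$, using $\theta_0(v + a) = \theta_0(v) + \theta_0(a) + \varphi(v,a) = \theta_0(v) + \varepsilon + \varphi(v,a)$. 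The transition table, together with the explicit realizing paths above, completes the argument; the only genuinely delicate point is handling overlaps (when a coordinate we wish to add is already present in $\S$), which I would dispatch by noting such overlaps only decrease the path length and the class of the endpoint still lands in $C^\ep$ by Lemma~\ref{l:sum}.
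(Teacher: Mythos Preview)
Your proposal is correct and follows essentially the same approach as the paper: both arguments classify $\alpha_\S$ by the pair $(|\S|\bmod 2,\ v)$, invoke Lemma~\ref{l:codechar}(a) for the characterization of $C^\ep$, and construct explicit nearby codewords using Lemma~\ref{l:sumof2} in case (iii) and Corollary~\ref{c:sumof2} in case (iv). The only real difference is in the lower bounds: the paper dispatches case (iii) by noting that the exhibited codeword $\alpha'$ is at distance exactly $2$ and then invoking the minimum distance $d=4$ (so any other codeword is at distance $\geq 4-2=2$ from $\alpha_\S$), and leaves the lower bound in case (iv) largely implicit, whereas you set up the potential function $\Phi$ and check $|\Phi(\alpha_\S)-\Phi(\alpha_{\S'})|\leq 1$ for adjacent vertices---a clean and equivalent way to get all four lower bounds at once. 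One small correction: when you worry about overlaps in case (iii), note that adding $\alpha_{\{\theta_x,\theta_y\}}$ always gives Hamming distance exactly $2$ regardless of whether $\theta_x$ or $\theta_y$ lies in $\S$; the overlap affects $|\supp(\alpha')|$, not $d(\alpha_\S,\alpha')$, so there is nothing to dispatch there.
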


\begin{proof}
Suppose that $|\S|$ is even. If $v=0$, then by Lemma \ref{l:codechar} $\alpha_\S\in C^\ep$ and (i) holds, so we may assume that $v\neq 0$. 
By Lemma \ref{l:sumof2}, $v=x+y$ for distinct elements $x,y\in V^\ep.$
Set $\alpha':=\alpha_\S+\alpha_{\S'}$, where $\S'=\{\theta_x,\theta_y\}$,  so that $$\supp(\alpha')=\S\Delta\S' \mbox{ and } \sum_{\theta_a\in\supp(\alpha')} a=0.$$ In particular, $\alpha'\in C^\ep$ and $d(\alpha_\S, \alpha')=2$.  Now (iii) follows because $C^\ep$ has minimum distance $d=4$.  

Next suppose that $|\S|$ is odd.  If $v \in V^\ep$ then $\alpha'=\alpha_\S+\alpha_{\{\theta_v\}}$ is a codeword with $d(\alpha_\S,\alpha')=1$ so that (ii) holds. If $v \in V^{1-\ep}$ then  by Corollary \ref{c:sumof2}, there exist $x,y,z \in V^\ep$ such that $v=x+y+z$. In this case $\alpha'=\alpha_\S+\alpha_{\{\theta_x,\theta_y,\theta_z\}}$ is a codeword with $d(\alpha_\S,\alpha')=3$ and (iv) holds.


\end{proof}

\begin{Cor}\label{c:comtrans}
For each $m \geq 3$ and $\epsilon \in \mathbb{F}_2$, $C^\ep$ is a completely transitive code with covering radius $3$.  
\end{Cor}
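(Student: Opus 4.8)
The plan is to combine the covering-radius computation from Proposition~\ref{p:covC} with the group-theoretic description of $\L_\infty(\De^\ep)$ from Theorem B to exhibit a group $X\leq \Aut(\Gamma)$, where $\Gamma = H(f_\ep(m),2)$, whose orbits on the vertex set are exactly the distance classes $C^\ep_0,\dots,C^\ep_3$. Once we have such an $X$, complete transitivity is immediate from Definition~\ref{d:ctrans}, and the covering radius is $3$ by Proposition~\ref{p:covC}; the case $m=3$ can be dispatched by a direct GAP computation (consistent with the paper's practice elsewhere), so the substance is in $m\geq 4$.

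First I would identify the candidate group. The symmetric group $\Sym(\Omega^\ep)$ embeds in $\Aut(\Gamma)$ as the "coordinate permutation" subgroup $L\cong\Sym(f_\ep(m))$ acting on $W^\ep$ via the second formula in \eqref{autgpaction}, and $C^\ep$ is invariant under $\Aut(C^\ep)$. The design $\De^\ep$ is $\Sp_{2m}(2)$-invariant (Lemma~\ref{l:spdes}), so the image of $\Sp_{2m}(2)$ in $\Sym(\Omega^\ep)\leq \Aut(\Gamma)$ stabilises the span $C^\ep$ of the characteristic vectors of lines; by Theorem B(a), this image is precisely $\L_\infty(\De^\ep)$. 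I would take $X := \L_\infty(\De^\ep)\cong\Sp_{2m}(2)$, viewed inside $L\leq\Aut(\Gamma)$. (If this turns out to have too many orbits, one enlarges $X$ by adjoining a single well-chosen element of the base group $B$ translating a codeword; but $\Sp_{2m}(2)$ should suffice because it is already $2$-transitive on $\Omega^\ep$.)

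The core step is then to show $X$ is transitive on each $C^\ep_i$. By Proposition~\ref{p:covC} the class $C^\ep_i$ is characterised by the parity of $|\S|$ together with the value (or $\Sp_{2m}(2)$-orbit) of $v := \sum_{\theta_a\in\S}a$: it is $0$ for $i=0$, in $V^\ep$ for $i=1$, nonzero for $i=2$, and in $V^{1-\ep}$ for $i=3$. Since $X$ preserves $|\S|$ mod $2$ and acts on the "syndrome" $v$ through the linear $\Sp_{2m}(2)$-action on $V$ (this is exactly the content of Lemma~\ref{l:evensum}, which says the assignment $\S\mapsto\sum a$ is $\Sp_{2m}(2)$-equivariant), the classes $C^\ep_i$ are unions of $X$-orbits. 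To get a single orbit, I would argue by a "peeling" induction on $|\S|$: using Lemma~\ref{l:codechar}, any two vertices in the same class differ by a codeword, and a codeword of weight $>4$ can be reduced modulo weight-$4$ codewords (lines); combined with the fact that $X$ is transitive on lines $\B^\ep$ and $2$-transitive on points $\Omega^\ep$, one reduces any vertex in class $C^\ep_i$ to a canonical representative of minimal support ($\emptyset$, a single point, a $2$-set with a prescribed syndrome, a $3$-set with prescribed syndrome), and transitivity of $X$ on these canonical sets follows from $2$-transitivity on points together with Theorem~\ref{t:3sets} for the $3$-sets. The main obstacle is precisely this last reduction: showing that within a fixed class, $X$ can move \emph{any} representative to the canonical one — i.e. that the weight-$4$ codewords (which are permuted transitively by $X$) generate enough of the coset structure, and that the stabiliser in $X$ of the syndrome $v$ still acts transitively on the relevant family of minimal-support sets. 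This requires care in the $3$-subset case ($i=3$), where one must invoke the orbit structure of Theorem~\ref{t:3sets} and check that the blocks $\Delta^\ep_v$ match up with fixing the syndrome; once that bookkeeping is done, the corollary follows.

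Finally, I would record that since $C^\ep$ has exactly four distance classes and $X\cong\Sp_{2m}(2)$ is transitive on each, $C^\ep$ is $X$-completely transitive with covering radius $3$, which is the assertion of Corollary~\ref{c:comtrans}; the remaining numerical data in Theorem C(a) (dimension $f_\ep(m)-(2m+1)$ and the intersection array) then come from a short separate computation using Lemma~\ref{l:rifa} together with the coset-weight counts $\mu_i = |C^\ep_i|/|C^\ep|$ read off from Proposition~\ref{p:covC}.
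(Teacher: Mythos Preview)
Your proposed group $X=\Sp_{2m}(2)$, embedded in $\Aut(\Gamma)$ as coordinate permutations, is too small: any coordinate permutation preserves Hamming weight, so $X$ cannot possibly act transitively on $C^\ep_0=C^\ep$, which contains both the zero vector and codewords of weight $4$. The same obstruction recurs for every class $C^\ep_i$, each of which contains vertices of many different weights. Your parenthetical remark (``if this turns out to have too many orbits, one enlarges $X$ by adjoining a single well-chosen element of the base group'') is in fact not optional but essential, and a single translation will not suffice either.

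What the paper does is exactly what this forces: it takes $X = N_{C^\ep}\rtimes \Sp_{2m}(2)\leq \Aut(C^\ep)$, where $N_{C^\ep}$ is the full group of translations by codewords. Since $N_{C^\ep}$ acts regularly on $C^\ep$, it is transitive on each coset of $C^\ep$, so transitivity on $C^\ep_i$ reduces to showing that $\Sp_{2m}(2)$ permutes transitively the cosets of weight $i$; equivalently, that $\Sp_{2m}(2)$ is transitive on $\Gamma_i(0)\cap C^\ep_i$ for $i=0,1,2,3$. For $i=0,1,2$ this is immediate from $2$-transitivity on entries together with $d=4$; for $i=3$ the paper observes that $\Gamma_3(0)$ meets both $C^\ep_1$ and $C^\ep_3$ nontrivially, and since $\Sp_{2m}(2)$ has exactly two orbits on $\Gamma_3(0)$ by Theorem~\ref{t:3sets}, these two intersections must \emph{be} the two orbits. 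Your ``peeling'' argument is effectively re-doing the translation step by hand inside an $X$ that does not contain those translations, which is why the bookkeeping felt delicate; once $N_{C^\ep}$ is in $X$, the reduction to minimal-weight representatives is a single sentence and the appeal to Theorem~\ref{t:3sets} for the $3$-sets goes through exactly as you anticipated.
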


\begin{proof}
By Proposition \ref{p:covC}, $C^\ep$ has covering radius $3$ for $m\geq 4$, and using GAP \cite{GAP}, we verify this to
hold when $m=3$ also. Thus we need to show that $\Aut(C^\ep)$ is transitive on $C_i^\ep$ for $i=0,1,2,3$.  
Since $C^\ep$ is generated by the rows of the incidence matrix of $\De^\ep$, and
because $\De^\ep$ is a $\Sp_{2m}(2)$-orbit, it follows that $\Aut(C^{\epsilon}) \geq N_{C^{\epsilon}}\rtimes \Sp_{2m}(2)$, where $N_{C^{\epsilon}}$ is the group of translations of $C^{\epsilon}$. As $N_{C^{\epsilon}}$ acts regularly on $C^\ep$, $\Sp_{2m}(2)$ acts $2$-transitively on entries and $C^\ep$ has minimum distance $d=4$, 
we deduce that $C^\ep$, $C^\ep_1$ and $C^\ep_2$ are all $\Aut(C^\ep)$-orbits.
Let $\nu_1,\nu_2\in C^\ep_3$. As $\Aut(C^\ep)$ acts transitively on $C^\ep$, we can assume that 
$\nu_1, \nu_2 \in \Gamma_3(0)\cap C^\ep_3$. (Recall that $\Gamma_i(\alpha)=\{\beta\in W^\ep\,|\,d(\beta,\alpha)=i\}$.)
It is straightforward to show that both $\Gamma_3(0)\cap C_1$ and $\Gamma_3(0)\cap C_3$ are non-empty sets.
Thus $\Sp_{2m}(2)$ has at least $2$ orbits on $\Gamma_3(0)$.  But, by Theorem \ref{t:3sets}, $\Sp_{2m}(2)$ has exactly two orbits on $\Gamma_3(0)$.
Hence there exists $g\in \Sp_{2m}(2)$ such that $\nu_1^{g}=\nu_2$, proving that $C^\ep_3$ is an $\Aut(C^\ep)$-orbit, and therefore, $C^\epsilon$ is completely transitive.
\end{proof}






\subsection{Dimension of \texorpdfstring{$C^\ep$}{C epsilon} and completing the proof of Theorem C }



By Proposition \ref{p:covC}, we must have \begin{equation} \label{eq1}
2^{n^\ep}=|W^\ep|=|C^\ep|\sum_{i=0}^3\mu_i.\end{equation} where $n^\ep=|\Omega^\ep|$ and $\mu_i$ denotes
the number of cosets of $C^\ep$ of weight $i$. Thus, in the next result, we calculate
$\mu_i$ for $i=0,1,2,3$ which allows us to determine the dimension of $C^\ep$.

\begin{Prop}\label{p:dimC}
For each $m \geq 3$ and $\epsilon \in \mathbb{F}_2$, let $f_\epsilon(m):=2^{m-1} \cdot (2^m + (-1)^\epsilon).$ Then 
$C^\ep$ is a $[f_\epsilon(m),f_\epsilon(m)-(2m+1),4]$ completely transitive code with 
intersection array $$(f_\epsilon(m),f_\epsilon(m)-1,f_\epsilon(m)-2f_\ep(m-1);1,2f_\ep(m-1),f_\ep(m)).$$
\end{Prop}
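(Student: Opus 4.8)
The plan is to observe that most of the statement is already in place, and to supply only the dimension and the intersection array. By Theorem A the code $C^\ep$ has length $n^\ep=|\Omega^\ep|=f_\epsilon(m)$; by Corollary \ref{c:dist} it has minimum distance $d=4$; and by Corollary \ref{c:comtrans} it is completely transitive, hence completely regular, with covering radius $3$. (The case $m=3$ of the assertions below, wherever it relies on Lemma \ref{l:codechar}(a) or Proposition \ref{p:covC}, I would settle by a GAP \cite{GAP} computation, exactly as in Corollary \ref{c:comtrans}.) So two things remain: that the dimension is $f_\epsilon(m)-(2m+1)$, and that the intersection array is as stated.

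For the dimension, the plan is to study the $\mathbb{F}_2$-linear map
\[
 \phi\colon W^\ep\longrightarrow \mathbb{F}_2\times V,\qquad \alpha_\S\longmapsto\Bigl(|\S|\bmod 2,\ \sum_{\theta_a\in\S}a\Bigr),
\]
and to show it is onto with kernel $C^\ep$. That $\ker\phi=C^\ep$ is precisely Lemma \ref{l:codechar}(a). Surjectivity follows from the decomposition results already proved: $(0,0)$ is the image of the zero vector; $(1,v)$ with $v\in V^\ep$ is the image of $\alpha_{\{\theta_v\}}$; $(0,v)$ with $0\ne v\in V$ is the image of $\alpha_{\{\theta_x,\theta_y\}}$, where $x\ne y$ lie in $V^\ep$ with $x+y=v$ by Lemma \ref{l:sumof2}; and $(1,v)$ with $v\in V^{1-\ep}$ is the image of the weight-three vector supported on $\{\theta_{y_1},\theta_{y_2},\theta_{y_3}\}$, where $y_1,y_2,y_3$ are distinct elements of $V^\ep$ with $y_1+y_2+y_3=v$, furnished by Corollary \ref{c:sumof2} applied with $\ep$ and $1-\ep$ interchanged. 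Hence $W^\ep/C^\ep\cong\mathbb{F}_2\times V$, so $C^\ep$ has codimension $2m+1$ and therefore dimension $f_\epsilon(m)-(2m+1)$. Equivalently, the fibres of $\phi$ are exactly the coset classes of Proposition \ref{p:covC}, so $\mu_0=1$, $\mu_1=|V^\ep|$, $\mu_2=2^{2m}-1$, $\mu_3=|V^{1-\ep}|$, and feeding $\sum_i\mu_i=2^{2m+1}$ into \eqref{eq1} gives the same count.

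For the intersection array $(b_0,b_1,b_2;c_1,c_2,c_3)$ the plan is to exploit that every $C_i$ consists of vertices of a single parity of support (even for $C_0,C_2$ and odd for $C_1,C_3$, by Proposition \ref{p:covC}): since a Hamming-neighbour has opposite parity, no vertex of $C_i$ has a neighbour in $C_i$, i.e. $a_i:=n^\ep-b_i-c_i=0$ for all $i$. As $H(n^\ep,2)$ is $n^\ep$-regular this gives $b_i+c_i=n^\ep=f_\epsilon(m)$ throughout; together with $c_0=0$ and $b_3=0$ this yields $b_0=c_3=f_\epsilon(m)$. Next, $d=4>2$ forces a unique nearest codeword for each vertex of $C_1$, so $c_1=1$ and $b_1=f_\epsilon(m)-1$. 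Finally, for $\beta\in C_2$ set $v:=\sum_{\theta_a\in\supp(\beta)}a\ne 0$; flipping the coordinate indexed by $\theta_w$ (with $w\in V^\ep$) sends $\beta$ into $C_1$ precisely when $\theta_0(v+w)=\ep$, so $c_2=|\{w\in V^\ep\mid\theta_0(v+w)=\ep\}|$. This number is independent of the nonzero vector $v$ (which one can read off from complete regularity of $C^\ep$, or argue directly), so one may take $v=e_1$ and invoke the computation in the proof of Lemma \ref{l:indcons2}, which gives exactly $2+2\lambda^\ep=2|V_{m-1}^\ep|=2f_\epsilon(m-1)$. Hence $c_2=2f_\epsilon(m-1)$ and $b_2=f_\epsilon(m)-2f_\epsilon(m-1)$, and the intersection array is as claimed.

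I expect the only step with genuine content to be the last one: the evaluation $c_2=2f_\epsilon(m-1)$ rests on the hyperplane-type count $|\{w\in V^\ep\mid\theta_0(e_1+w)=\ep\}|=2|V_{m-1}^\ep|$ already carried out for Lemma \ref{l:indcons2}, plus the observation that this count does not depend on the choice of the nonzero vector --- which one should either deduce from complete transitivity or confirm by a brief orbit argument on the nonzero vectors of $V$. Everything else is forced by the ``parity of the support together with the class of $\sum_a a$'' description of the distance partition supplied by Proposition \ref{p:covC}.
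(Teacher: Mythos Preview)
Your proposal is correct, and the intersection-array computation matches the paper's line of argument almost exactly: both of you use the parity observation to force $b_i+c_i=n^\ep$, read off $b_0,b_1,c_1,c_3$ from $d=4$ and $\rho=3$, and reduce $c_2$ to the same hyperplane-type count $|\{w\in V^\ep\mid\theta_0(e_1+w)=\ep\}|=2f_\ep(m-1)$ from Lemma~\ref{l:indcons2}. The paper phrases this last step via the design: a weight-$2$ vertex has two weight-$1$ neighbours plus $2\lambda^\ep$ weight-$3$ neighbours coming from the $\lambda^\ep$ lines through the pair, giving $c_2=2\lambda^\ep+2=2f_\ep(m-1)$; you instead invoke the Proposition~\ref{p:covC} description of $C_1$ directly. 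These are the same count in different clothing.

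Where you genuinely diverge is the dimension. The paper derives it \emph{after} the intersection array: it feeds $(b_i;c_i)$ into Lemma~\ref{l:rifa} to obtain the coset multiplicities $\mu_0,\mu_1,\mu_2,\mu_3$ explicitly, checks that their sum simplifies to $2^{2m+1}$, and then reads off $\dim C^\ep$ from $|W^\ep|=|C^\ep|\sum_i\mu_i$. Your route is cleaner and independent of the intersection array: the linear surjection $\phi\colon W^\ep\to\mathbb{F}_2\times V$ with $\ker\phi=C^\ep$ (via Lemma~\ref{l:codechar}(a)) gives codimension $2m+1$ in one stroke, and simultaneously identifies the $\mu_i$ as fibre sizes without recourse to Lemma~\ref{l:rifa}. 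The price is that you lean on Lemma~\ref{l:codechar}(a), stated only for $m\geq4$, so your $m=3$ caveat is necessary; the paper's route avoids that lemma here and needs GAP only for the covering radius, already handled in Corollary~\ref{c:comtrans}.
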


\begin{proof}
Write $n^\ep:=f_\epsilon(m)$ for short. By Proposition \ref{p:covC} and Corollary \ref{c:comtrans}, 
$C^\ep$ is completely transitive (and therefore completely regular) with covering radius $3$.
Let $(b_0,b_1,b_2; c_1,c_2,c_3)$ be the intersection array of $C^\ep$.  As $C^\ep$ has minimum distance $d=4$, 
it follows that $b_0=n^\ep$, $b_1=n^\ep-1$ and $c_1=1$. As $C^\ep$ is generated by codewords of weight $4$, it consists entirely of codewords of even weight.
From this we deduce that for any $\nu\in C_i$, there are no neighbours of $\nu$ in $C_i$, that is, $n^\ep-b_i-c_i=0$ 
(so $b_i+c_i=n^\ep$) for $i=0,1,2,3$. Therefore $c_3=n^\ep$. Now let $\nu\in C^\ep_2$,
and without loss of generality, assume that $\nu$ has weight $2$. Clearly $\nu$ has exactly two neighbours of weight $1$ in $C^\ep_1$,
so the number $c_2-2$ is equal to the number of weight $3$ neighbours of $\nu$ that are also covered by a codeword of weight
$4$. By Corollary \ref{c:dist}, the codewords of weight $4$ form a $2-(n^\ep,4,\lambda^\ep)$ design where $\lambda^\ep=2^{m-2}(2^{m-1}+(-1)^\ep)-1=f_\ep(m-1)-1$, 
so there exist $\lambda^\ep$ codewords of weight $4$ that cover $\nu$. Each contributes $2$ neighbours of $\nu$ of weight $3$ 
that are in $C^\ep_1$. Hence $c_2=2\lambda^\ep+2$, and thus 
$b_2=n^\ep-2\lambda^\ep-2$. Applying Lemma \ref{l:rifa} to the intersection array gives 
$$\mu_0=1,\,\,\mu_1=n^\ep,\,\,\mu_2=\frac{n^\ep(n^\ep-1)}{2\lambda^\ep+2},\,\,\mu_3=\frac{(n^\ep-1)(n^\ep-2\lambda^\ep-2)}{2\lambda^\ep+2}.$$
Thus $$2^{n^\ep}=|W^\epsilon|=|C^\ep|(1+n^\ep+\frac{n^\ep(n^\ep-1)}{2\lambda^\ep+2}+\frac{(n^\ep-1)(n^\ep-2\lambda^\ep-2)}{2\lambda^\ep+2}).$$
But $$n^\ep+\frac{(n^\ep-1)(n^\ep-2\lambda^\ep-2)}{2\lambda^\ep+2}=\frac{n^\ep(n^\ep-1)}{2\lambda^\ep+2}+1=2^{2m},$$ which 
implies that the dimension of $C^\ep$ is $n^\ep-(2m+1)$.
\end{proof}

\begin{proof}[Proof of Theorem C]
This follows immediately from Theorem \ref{t:affcode} and Propositions \ref{p:covC} and \ref{p:dimC}.
\end{proof}

\section{Conway groupoids with large support}\label{s:puzzlarge}

In this section we prove Theorem~D. Although Theorem~D is stated in terms of $\L_\infty(\De)$, it will be convenient to work instead with the hole stabilizer $G=\pi_\infty(\De)$. This approach is advantageous because of the extra flexibility afforded to us from knowing that $G$ is a group. 

In light of this we record the following statement which is equivalent to Theorem~D.

\begin{Thm}\label{t: d}
Suppose that $\De$ is a supersimple $2-(n,4,\lambda)$ design and that $G:=\pi_\infty(\De)$ is the associated hole-stabilizer. Suppose, furthermore, that $[\infty, a, b, \infty]=1$ whenever $\infty$ is collinear with $\{a,b\}$. Then one of the following is true:
\begin{enumerate}
 \item $\De$ is a Boolean design and $G$ is trivial;
 \item $\De=\mathbb{P}_3$ (the projective plane of order $3$) and $G\cong M_{12}$; or 
 \item $G = \Alt(n-1)$.
\end{enumerate}
\end{Thm}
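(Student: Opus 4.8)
The plan is to derive the structure of $G = \pi_\infty(\De)$ from the combinatorial constraint using Theorem~E, particularly part~(2), which guarantees that $G$ is primitive once $n > 9\lambda + 1$. So I would split into two regimes: a ``small'' regime $n \le 9\lambda + 1$ and a ``large'' regime $n > 9\lambda + 1$. The hypothesis $[\infty,a,b,\infty] = 1$ whenever $\infty \in \overline{a,b}$ is the crucial simplification: it says a great many of the natural generators of $G$ are trivial, so $G$ is generated only by those $[\infty,a,b,\infty]$ for which $\infty$ is \emph{not} collinear with $\{a,b\}$. The first step is to understand the cycle structure of such a generator: if $\infty \notin \overline{a,b}$, then $[\infty,a,b,\infty] = [\infty,a][a,b][b,\infty]$, and since the supports $\overline{\infty,a}$, $\overline{a,b}$, $\overline{b,\infty}$ each have size $2\lambda + 2$ and overlap in controlled ways, one computes that $[\infty,a,b,\infty]$ is a permutation supported on at most $6\lambda$ points (in fact I would pin down its cycle type as closely as possible — it should be a product of a bounded number of transpositions, roughly $3\lambda$ of them, with its support contained in $\overline{\infty,a} \cup \overline{a,b} \cup \overline{b,\infty}$).

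In the large regime, Theorem~E(2) gives that $G$ is primitive on $n-1$ points (note $\infty$ is fixed by everything in $G$, so $G$ embeds in $\Sym(n-1)$). A primitive group containing an element whose support has size bounded by an absolute multiple of $\lambda$ — while $n$ grows — must, by Jordan-type theorems (this is presumably where Theorem~\ref{t: ls} and hence CFSG enter, via the classification of primitive groups with small support elements / bounded minimal degree), contain $\Alt(n-1)$ once $n$ is large enough relative to $\lambda$; one then invokes Theorem~E(3) or E(4) directly to conclude $\L = \Alt(n)$, equivalently $G = \Alt(n-1)$. Actually the cleanest route is: apply Theorem~E(4), which says that for $n > 9\lambda^2 - 12\lambda + 5$ either $\L \supseteq \Alt(n)$ or we are in the $\mathbb{P}_3$ case. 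So the genuine work is confined to the range $n \le 9\lambda^2 - 12\lambda + 5$, and within that, the sub-range $n \le 9\lambda + 1$ where primitivity is not even guaranteed.

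For the remaining bounded range, the strategy is a more hands-on combinatorial analysis. The condition $[\infty,a,b,\infty] = 1$ for all collinear triples through $\infty$ is extremely rigid: fixing a line $\ell = \{\infty, a, a', a''\}$ through $\infty$, triviality of $[\infty,a,a',\infty]$, $[\infty,a,a'',\infty]$, $[\infty,a',a'',\infty]$ forces strong coincidences among the supports $\overline{\infty, a}$, etc. I would aim to show that these forced coincidences propagate: either (a) the lines through $\infty$ are forced to have a ``Boolean'' nesting structure, meaning $\De$ is a Boolean design (and then by the classification of Boolean designs, $G$ is trivial — this matches case (1), and one should cite the relevant characterization of Boolean designs from \cite{Puzz}); or (b) $\lambda = 1$, where the extra hypothesis is vacuous and the classification of supersimple $2\text{-}(n,4,1)$ designs with non-full Conway groupoid (already done in \cite{Puzz}) leaves only $\mathbb{P}_3$, giving case (2); or (c) enough non-trivial generators survive that $G$ is forced large, and then one bootstraps back to case (3) via primitivity (Theorem~E(2)) plus a Jordan argument on the $3\lambda$-ish-support elements.

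\textbf{Main obstacle.} I expect the hard part to be the intermediate range — roughly $2\lambda + 2 < n \le 9\lambda^2$ — where $G$ need not be primitive and Theorem~E gives no control, yet $\De$ need not be Boolean. Here one must extract real structural information purely from the hypothesis that \emph{all} closed length-two move sequences based at $\infty$ through collinear pairs are trivial. The key technical lemma to aim for is: this hypothesis implies that for every line $\ell \ni \infty$, the map $x \mapsto \overline{\infty,x}$ behaves ``linearly'' on $\ell$, which should be enough to identify a Boolean substructure or force $\lambda = 1$. Establishing that lemma cleanly, and ruling out exotic imprimitive configurations without just quoting a GAP computation, is where the argument will need the most care; I would try to phrase it as a statement about the action of $G$ on the ``holes'' $\overline{\infty, x}$ and use that a group generated by an unusually sparse set of involutions with heavily overlapping supports is severely constrained.
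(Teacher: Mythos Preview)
There is a genuine circularity in your plan: you propose to invoke Theorem~E(4) to handle the range $n > 9\lambda^2 - 12\lambda + 5$, but Theorem~E(4) is proved \emph{using} Theorem~D (see the paragraph preceding Theorem~E and the proof of Theorem~E itself). Only parts~(1)--(3) of Theorem~E are available to you here, and part~(3) only kicks in above a bound quadratic in~$\lambda$, so your ``large regime'' argument as stated does not close.

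More fundamentally, you have not identified the structural lemma that does the real work, and as a result you are worrying about a regime that does not exist. The hypothesis that $[\infty,a,b,\infty]=1$ whenever $\infty \in \overline{a,b}$ forces $\overline{a,b} = \overline{a,c} = \overline{b,c}$ for every $c \in \overline{a,b}$ (Lemma~\ref{l:oneall}), and from this one deduces (Proposition~\ref{p:bool}) that each $\overline{a,b}$ carries a Boolean $3$-$(2\lambda+2,4,1)$ subdesign; in particular $\lambda = 2^\alpha - 1$, and the sets $\overline{a,b}$ form the line set of a $2$-$(n, 2\lambda+2, 1)$ design on $\Omega$. Fisher's inequality applied to this derived design gives $n \geq (2\lambda+2)(2\lambda+1)+1 > 9\lambda + 1$ whenever $n > 2\lambda+2$. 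So your ``small regime'' $n \leq 9\lambda+1$ collapses to the single case $n = 2\lambda+2$, i.e.\ the Boolean design; there is no intermediate imprimitive range, and $G$ is automatically primitive in every non-Boolean case. From here the paper combines the divisibility constraints of the derived $2$-$(n,k,1)$ design (with $k = 2^{\alpha+1}$), the support bound $6\lambda+2$, and Theorem~\ref{t: ls} to pin $n$ down to four explicit values in terms of~$k$ (Corollary~\ref{c: bool}), and then eliminates each via a Catalan-type number-theoretic lemma (Lemma~\ref{t: catalan}) together with a support estimate for the subset action of~$\Sym(m)$ (Lemma~\ref{l: d}). Your proposal gestures at a ``Boolean substructure'' but does not reach the derived $2$-design or Fisher's inequality, and anticipates none of the arithmetic endgame.
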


The fact that Theorem~\ref{t: d} is equivalent to Theorem~D can be proved using \cite[Theorem B]{Puzz}, \cite[Proposition 3.4]{Co}, and \cite[Lemma 3.1]{Puzz}. Throughout this section we operate under the suppositions of Theorem~\ref{t: d}.

\subsection{Background results}

We start by collecting a number of important background results.

For a permutation group $H$ acting on a set of size $d$ we write $\mu(H)$ for the smallest number of elements moved by a non-trivial element of $H$ (i.e. $\mu(H)$ is the size of the smallest possible support of a non-trivial element of $H$). In what follows we will use the crucial fact that if $H$ is primitive and doesn't contain $\Alt(d)$, then $\mu(H)$ is bounded below by a function of $d$. 

The following theorem is due to Liebeck and Saxl \cite{ls}, and makes use of the Classification of Finite Simple Groups.

\begin{Thm}\label{t: ls}
Let $d$ be a positive integer and let $H$ be a primitive subgroup of $\Sym(d)$ that does not contain $\Alt(d)$. Either $\mu(H)\geq \frac13 d$ or $(\Alt(m))^r\unlhd G \leq \Sym(m)\wr \Sym(r)$ where $m\geq 5$ and the wreath product acts, via the product action on $\Omega=\Delta^r$ and $\Delta$ is either the set of $\ell$-subsets of $\{1,\dots, m\}$ ($1\leq\ell<\frac12m$) or $m=|\Delta|=6$. In particular, in all cases, $\mu(H)\geq 2 (\sqrt{d}-1).$
\end{Thm}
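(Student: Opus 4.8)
Theorem~\ref{t: ls} has two parts: the ``either/or'' trichotomy, and the uniform bound $\mu(H)\geq 2(\sqrt d-1)$ claimed at the end. The first part is quoted verbatim from Liebeck--Saxl \cite{ls}, so the only thing requiring an argument here is the final sentence, and that is what the plan addresses. The strategy is to observe that in the first alternative we trivially have $\mu(H)\geq \tfrac13 d \geq 2(\sqrt d -1)$ for all $d\geq 1$ (since $\tfrac13 d - 2\sqrt d + 2 = \tfrac13(\sqrt d - 3)^2 + (\text{nonnegative remainder})\geq 0$, as one checks by a one-variable calculus/discriminant argument), so it suffices to handle the wreath-product exceptions.

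In the exceptional case $(\Alt(m))^r\unlhd G\leq \Sym(m)\wr\Sym(r)$ acting on $\Omega=\Delta^r$ with $|\Delta|=N$ (so $d=N^r$), the plan is to exhibit a non-trivial element of $G$ of small support and then bound its support from above. Take a non-trivial element $g\in\Alt(m)$ of smallest support acting on $\Delta$; its support on $\Delta$ has size $s:=\mu(\Alt(m) \text{ on } \Delta)$. Placing $g$ in a single coordinate of the base group $(\Alt(m))^r$ and the identity in the others gives an element of $G$ whose support on $\Delta^r$ is exactly $s\cdot N^{r-1} = s\, d/N$. Hence $\mu(H)\leq s\,d/N$, but we want a lower bound, so instead I would argue the other way: I want to show $\mu(H)$ is large, i.e. that \emph{every} non-trivial element has support at least $2(\sqrt d -1)$. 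A non-trivial $h\in G$ either acts non-trivially on the $\Sym(r)$ quotient (permuting coordinates) or lies in the base group. If it permutes coordinates non-trivially, a short computation shows its support is at least $N^{r-1}(N-1)\geq$ roughly $d/2$, comfortably above $2(\sqrt d-1)$. If it lies in the base group $(\Alt(m))^r$, write $h=(h_1,\dots,h_r)$ with some $h_i\neq 1$; the support is $\sum_{i:h_i\neq1} |\mathrm{supp}_\Delta(h_i)|\cdot N^{r-1}\geq s\, N^{r-1} = s\, d/N$. Now in the two possibilities for $\Delta$ one inputs the known value of $s$: when $\Delta$ is the set of $\ell$-subsets of $\{1,\dots,m\}$, a $3$-cycle in $\Alt(m)$ moves $\binom{m}{\ell}-\binom{m-3}{\ell}$ of them, which is of order $\binom{m-1}{\ell-1}\asymp \ell N/m$, giving $s\,d/N \gtrsim d/\sqrt d$; and when $m=N=6$ one has $r$ arbitrary and $s\geq 2$ (actually the minimal support of $\Alt(6)$ on $6$ points is $3$), so $s\,d/N \geq 3d/6 = d/2$. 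In every case one checks $s\,d/N\geq 2(\sqrt d-1)$, which is the weakest of all these bounds and is what gets recorded in the statement because it holds uniformly.

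The cleanest way to organize this is: (1) dispense with the $\mu(H)\geq\tfrac13 d$ branch by the elementary inequality $\tfrac13 d\geq 2(\sqrt d -1)$ for $d\geq 1$; (2) in the wreath case, set $d=N^r$ with $N=|\Delta|$, $r\geq 1$ (note $r\geq 2$ in the genuine exceptions, but $r=1$ is harmless), and reduce to bounding below the support of an arbitrary non-identity element, splitting into the base-group and non-base-group cases as above; (3) plug in the minimal support $s$ of $\Alt(m)$ in its action on $\Delta$ for the two permissible $\Delta$'s, using $\binom{m}{\ell}-\binom{m-3}{\ell}$ for the $\ell$-subset action and the value $3$ for the $\Alt(6)$-on-$6$-points action; (4) verify the resulting lower bounds all dominate $2(\sqrt d -1)$, the product-action $\ell$-subsets case being the tightest and hence the one that dictates the stated constant.

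The main obstacle is the bookkeeping in step (3)--(4): one must confirm that $s\cdot N^{r-1}\geq 2(N^{r/2}-1)$, i.e. $s\geq 2(N^{-r/2+1} - N^{-r+1})$, which since $r\geq 2$ and $N\geq 5$ (or $N=6$) reduces to checking $s\geq 2N^{1-r/2}$; for $r\geq 2$ the right side is at most $2$, and $s\geq 3$ always (the minimal support of any non-trivial even permutation is $3$, and $\binom{m}{\ell}-\binom{m-3}{\ell}\geq 3$ easily holds in the stated range $1\leq\ell<m/2$), so the inequality is actually quite slack. The only mildly delicate point is the non-base-group case, where one must check that moving coordinates around in $\Delta^r$ really does displace at least $N^{r-1}(N-1)$ points --- this follows because a coordinate permutation with a nontrivial cycle, combined with any base component, cannot fix any tuple that is non-constant on that cycle, and the constant tuples number only $N$ per cycle. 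I would present these as two short lemmas feeding into the final inequality, and otherwise cite \cite{ls} for the trichotomy itself.
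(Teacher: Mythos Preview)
The paper does not prove Theorem~\ref{t: ls} at all: it is quoted as a result of Liebeck and Saxl~\cite{ls}, including the ``in particular'' clause, and is invoked as a black box. So there is no proof in the paper to compare against; your attempt to \emph{derive} the uniform bound $2(\sqrt d-1)$ from the stated dichotomy is already more than the paper does.

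That said, your derivation has a genuine error in step~(1). The inequality $\tfrac13 d\ge 2(\sqrt d-1)$ is \emph{false} for $2\le d\le 22$: writing $x=\sqrt d$, one has
\[
\tfrac13 d-2\sqrt d+2=\tfrac13(x-3)^2-1,
\]
so the ``nonnegative remainder'' you allude to is actually $-1$, and the expression is negative precisely for $x\in(3-\sqrt3,\,3+\sqrt3)$. For instance at $d=9$ one gets $3<4$. Hence the first alternative of the dichotomy does not by itself imply the uniform bound, and you would need a separate argument (direct inspection of primitive groups of small degree, or a sharper input from~\cite{ls}) to cover $d\le 22$.

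Two smaller points on the wreath-product branch. First, for a base-group element $h=(h_1,\dots,h_r)$ the support on $\Delta^r$ is not the sum you wrote but $N^r-\prod_i(N-|\mathrm{supp}(h_i)|)$; fortunately this is still at least $s\,N^{r-1}$, so your bound survives. Second, your reduction ``$s\ge 2N^{1-r/2}$ and for $r\ge 2$ the right side is at most $2$'' silently excludes $r=1$, but $r=1$ with $\ell\ge 2$ is a genuine case of the dichotomy (while $r=1$, $\ell=1$ is excluded only because then $G\ge\Alt(d)$). For $r=1$, $\ell\ge 2$ one must actually check $2\binom{m-2}{\ell-1}\ge 2(\sqrt{\binom{m}{\ell}}-1)$; this follows from $\binom{m}{\ell}=\frac{m(m-1)}{\ell(m-\ell)}\binom{m-2}{\ell-1}\le m\binom{m-2}{\ell-1}$ together with $\binom{m-2}{\ell-1}\ge m-2$, but it is not automatic from ``$s\ge 3$''.
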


Observe that Theorem~\ref{t: ls} implies that either $\mu(H)\geq \frac13d$ or else we have that $d=\binom{m}{\ell}^r$ or $6^r$. 

We will also need an elementary result from number theory, which can be regarded as a special case of Mih{\u{a}}ilescu's theorem, formerly the Catalan conjecture \cite{mihailescu}.

\begin{Lem}\label{t: catalan}
Suppose that $a,b,p$ are positive integers, that $a,b>1$ and that $p^a \pm 1=2^b$. Then either $a=1$ or $p=3, a=2.$
\end{Lem}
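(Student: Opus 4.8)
The plan is to prove this directly by elementary factorization in $\mathbb{Z}$, without invoking Mih{\u{a}}ilescu's theorem: the exponents are small enough that the full strength of the Catalan conjecture is not needed. First I would observe that $p$ must be odd, since otherwise $p^a\pm 1$ is odd and (using $a>1$) greater than $1$, hence not a power of $2$. The two choices of sign are then handled separately.

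For $p^a+1=2^b$ the claim is that, under our hypotheses, there is no solution at all. If $a$ is even, write $a=2c$; then $p^a=(p^c)^2$ is an odd square, hence $\equiv 1\pmod 4$, so $p^a+1\equiv 2\pmod 4$, contradicting $b>1$. If $a$ is odd, I would factor $p^a+1=(p+1)\bigl(p^{a-1}-p^{a-2}+\cdots-p+1\bigr)$; the second factor is a sum of $a$ odd terms, hence odd, and it is at least $p^2-p+1\geq 7$ once $a\geq 3$. Since it divides $2^b$, this is impossible, so $a=1$, contrary to $a>1$. Hence the $+$ case does not occur.

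For $p^a-1=2^b$ I would first rule out odd $a$: writing $p^a-1=(p-1)\bigl(1+p+\cdots+p^{a-1}\bigr)$, the second factor is a sum of $a$ odd terms, hence odd when $a$ is odd, and it exceeds $1$ once $a\geq 3$; as it divides $2^b$ this is impossible. So $a$ is even, say $a=2c$, and then $p^a-1=(p^c-1)(p^c+1)$ writes $2^b$ as a product of two positive integers, each of which is therefore a power of $2$. Two powers of $2$ differing by $2$ can only be $2$ and $4$, so $p^c-1=2$, i.e.\ $p^c=3$; since $p$ and $c$ are positive integers with $p\neq 1$, this forces $p=3$ and $c=1$, hence $a=2$ (and $b=3$). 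This gives the stated conclusion, which under the hypothesis $a>1$ amounts to $p=3$, $a=2$.

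The argument is entirely elementary and I do not anticipate a genuine obstacle. The only points needing a little care are the bookkeeping of the parity of the ``cyclotomic'' cofactors $1\pm p+p^2\mp\cdots$ and the verification that these cofactors really do exceed $1$ whenever $a\geq 3$, so that their oddness genuinely contradicts divisibility by $2^b$; one should also check the trivial boundary cases (for instance $p$ even, or small values of $a$) to be certain that nothing else slips through.
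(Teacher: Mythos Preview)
Your proof is correct and follows essentially the same approach as the paper: factor $p^a\pm 1$ via the cyclotomic identity when $a$ is odd (noting the cofactor is an odd integer exceeding $1$), and for even $a$ either reduce modulo $4$ (in the $+$ case) or write $p^a-1=(p^c-1)(p^c+1)$ as a product of two powers of $2$ differing by $2$. The only difference is organizational---you split first by sign and then by the parity of $a$, whereas the paper splits first by parity---and you make the oddness of $p$ explicit where the paper leaves it implicit.
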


\begin{proof}
If $a$ is odd then the second factor in $p^a \pm 1=(p \pm 1)(p^{a-1} \mp \ldots + 1)$ is odd. Hence $a-1=0$ in this case. If $a=2t$ for some $t > 0$ then there are two possibilities: firstly, we could have $2^b=(p^{2t}-1)=(p^t-1)(p^t+1)$ and we obtain immediately that $(p,t,a)=(3,1,2)$. Secondly, we could have $2^b=p^{2t}+1$; but since $p^{2t}+1 \equiv 2 \mod 4$, this yields no solutions. 
\end{proof}



\subsection{A structure result}

Our main tool for proving Theorem D will be the following proposition that provides a detailed description of the structure of a design satisfying the suppositions of Theorem D.

\begin{Prop}\label{p:bool}
 Suppose that $\De$ is a supersimple $2-(n,4,\lambda)$ design, and that $G$ contains no non-trivial elements of the form $g=[\infty, a, b, \infty]$ where $\infty\in\overline{a,b}$. Then $\lambda=2^\alpha-1$ for some positive integer $\alpha$, and any two points $a$ and $b$ lie in a unique Boolean $3-(2^{\alpha+1},4,1)$ subdesign $\De_{a,b}$.
Moreover, writing $\Lambda:=\{\overline{a,b} \mid a,b \in \Omega, a \neq b \}$, the pair $(\Omega,\Lambda)$ is a $2-(n,2^{\alpha+1},1)$ design.
\end{Prop}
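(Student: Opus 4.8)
The plan is to bootstrap from local structure (the neighbourhood $\overline{a,b}$ of two points) to global structure. First I would show that each set $\overline{a,b}$, together with the lines of $\De$ it contains, forms a Boolean subdesign. Pick distinct points $a,b$ and consider the subset $\overline{a,b}$. By definition, every line through $a$ and $b$ lies inside $\overline{a,b}$, and there are exactly $\lambda$ of them, so $|\overline{a,b}| = 2\lambda + 2$ (supersimplicity forces these lines to pairwise meet only in $\{a,b\}$). The key step is to use the hypothesis $[\infty,a,b,\infty]=1$ whenever $\infty\in\overline{a,b}$: by Corollary~\ref{c:transconj}-type reasoning (or directly from the generation statement), the hypothesis says the elementary moves behave like involutions whose products inside $\overline{a,b}$ are trivial, which is precisely the multiplication law of an elementary abelian $2$-group. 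Concretely, I want to equip $\overline{a,b}$ with a $\mathbb{F}_2$-vector space structure so that the lines through any two of its points are exactly the $2$-dimensional affine flats $\{x,y,z,x+y+z\}$; this is the Boolean design $3-(2^{\alpha+1},4,1)$.

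Second, I would pin down $\lambda$. Once $\overline{a,b}$ carries the structure of (the point set of) a Boolean $3$-design, its size must be a power of $2$, say $2^{\alpha+1}$, whence $\lambda = 2^\alpha - 1$. The mechanism: fix $a$ and let $b$ vary over $\overline{a,b}\setminus\{a\}$; the relation "$x \in \overline{a,y}$" should be shown to be an equivalence-like closure operation on $\overline{a,b}\setminus\{a\}$ making it closed under the ternary operation $(x,y,z)\mapsto x+y+z$, and a finite set closed under a Steiner-triple/Boolean ternary operation with a distinguished "zero" (namely $a$) is an affine $\mathbb{F}_2$-space. Here I would lean on the hypothesis repeatedly: the triviality of $[\infty,a,b,\infty]$ for collinear triples is what guarantees associativity/consistency of this operation, ruling out more exotic Steiner systems.

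Third, for the uniqueness of $\De_{a,b}$: any Boolean subdesign containing both $a$ and $b$ must contain all $\lambda$ lines through $a,b$, hence contains $\overline{a,b}$; conversely $\overline{a,b}$ already is such a subdesign, so it is the unique one, and I would denote it $\De_{a,b}$. Then, for the final claim about $(\Omega,\Lambda)$ being a $2-(n,2^{\alpha+1},1)$ design: the blocks are the sets $\overline{a,b}$, each of the uniform size $2^{\alpha+1}$ just established. I need that any two distinct points $p,q$ lie in exactly one block. Existence is immediate: $p,q \in \overline{p,q} \in \Lambda$. Uniqueness is the crux: if $p,q \in \overline{a,b}$ then, because $\overline{a,b}$ is (affinely) a vector space in which lines are the $2$-flats, the "line-closure" of $\{p,q\}$ computed inside $\overline{a,b}$ equals $\overline{p,q}$, and I must argue this agrees with the line-closure computed in all of $\De$ — i.e. that $\overline{a,b}$ is "line-closed" in $\De$, so no line of $\De$ meets it in $\geq 3$ points without being contained in it. This follows from supersimplicity (a line meeting $\overline{a,b}$ in $3$ points is determined by those points, and those three points already span a line inside the Boolean subdesign, which by simplicity is that same line). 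Hence $\overline{p,q}=\overline{a,b}$, giving uniqueness, so $(\Omega,\Lambda)$ is a $2-(n,2^{\alpha+1},1)$ design.

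The main obstacle I anticipate is the second step — rigorously extracting the $\mathbb{F}_2$-vector-space structure on $\overline{a,b}$ from the combinatorial hypothesis, i.e. showing that the ternary operation induced by the lines is well-defined and satisfies the axioms of an affine space over $\mathbb{F}_2$ (equivalently, that $\overline{a,b}$ with its lines is a Boolean design and not some other $3$-design). Everything else is then fairly mechanical bookkeeping with sizes and supersimplicity, but this identification is where the hypothesis "$[\infty,a,b,\infty]=1$ for collinear $\infty$" must be used in an essential, non-obvious way — it is precisely the condition that forbids the "twisting" that would occur in a non-Boolean Steiner quadruple system.
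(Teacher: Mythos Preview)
Your strategy is broadly the paper's, but the crucial step — why $\overline{a,b}$ is line-closed in $\De$, i.e.\ why $\overline{r,s}=\overline{a,b}$ for every pair $r,s\in\overline{a,b}$ — is never actually established, and everything else (the SQS structure, the Boolean identification, the $2$-design claim) rests on it. Your supersimplicity argument in the uniqueness step only shows that a line meeting $\overline{a,b}$ in \emph{three} points lies inside it; it says nothing about a line through $r,s\in\overline{a,b}$ whose other two points might, a priori, lie outside. And you cannot yet appeal to the Boolean structure on $\overline{a,b}$ to count those lines, since that structure is precisely what you are trying to build. (Also, the reference to Corollary~\ref{c:transconj} is a red herring: that result is specific to the symplectic designs $\De^\ep$, $\De^a$ and is unavailable for a general $\De$.)

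The paper fills this gap by a direct support-tracking argument (Lemma~\ref{l:oneall}): for $c\in\overline{a,b}$, if some $x\in\overline{a,c}$ avoided $\overline{a,b}\cup\overline{b,c}$ then the permutation $[c,a][a,b][b,c]$ would move $x$, contradicting the hypothesis $[c,a,b,c]=1$; a short case analysis together with supersimplicity forces $\overline{a,c}=\overline{a,b}=\overline{b,c}$. Iterating gives $\overline{r,s}=\overline{a,b}$ for all $r,s\in\overline{a,b}$, so the induced structure on $\overline{a,b}$ is a Steiner quadruple system $SQS(2\lambda+2)$ whose hole stabilizer at every point is trivial (by the hypothesis again). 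Rather than extracting the $\mathbb{F}_2$-affine structure by hand — the step you correctly flag as the obstacle — the paper simply invokes \cite[Theorem~B]{Puzz}, which says that an SQS with trivial hole stabilizer is Boolean; this immediately gives $2\lambda+2=2^{\alpha+1}$. Uniqueness and the $2$-$(n,2^{\alpha+1},1)$ claim then follow directly from the closure identity $\overline{r,s}=\overline{a,b}$ already in hand, with no further work.
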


For a definition of the Boolean $3-(2^k,4,1)$ design we refer the reader to \cite[Section 2]{Puzz}. Notice that when $\alpha=1$, Proposition \ref{p:bool} is true but gives no information: in this case we have $\lambda=1$, the Boolean subdesign $\De_{a,b}$ is the trivial design containing $1$ line and the pair $(\Omega,\Lambda)$ is just the original design $\De$.

\begin{Lem}\label{l:oneall}
Let $(\Omega,\B)$ be a supersimple $2-(n,4, \lambda)$ design, and let $a,b,c$ be distinct points in $\Omega$ such that $c \in \overline{a,b}$ and $[c,a,b,c]=1$.
Then $\overline{a,c}=\overline{a,b}=\overline{b,c}$.
\end{Lem}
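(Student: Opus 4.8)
The plan is to translate the hypothesis into an identity between the elementary-move permutations $[a,b]$, $[a,c]$, $[b,c]$ and then recover the claimed equalities by comparing supports, using that $\supp([x,y])=\overline{x,y}$ and that $\De$ is supersimple.

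First I would expand $[c,a,b,c]=[c,a]\cdot[a,b]\cdot[b,c]=1$; since each elementary move is an involution, rearranging this single relation shows that each of $[a,b]$, $[a,c]$, $[b,c]$ equals the product (in one order or the other) of the remaining two, in particular $[a,b]=[a,c]\cdot[b,c]$. As the support of a product lies in the union of the supports, this yields $\overline{a,b}\subseteq\overline{a,c}\cup\overline{b,c}$ and, with the roles of the three pairs permuted, $\overline{a,c}\subseteq\overline{a,b}\cup\overline{b,c}$ and $\overline{b,c}\subseteq\overline{a,b}\cup\overline{a,c}$. Because $\De$ is supersimple, the $\lambda$ lines through a given pair meet pairwise only in that pair, so $|\overline{a,b}|=|\overline{a,c}|=|\overline{b,c}|=2\lambda+2$. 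Writing $P:=\overline{a,b}\cap\overline{a,c}\cap\overline{b,c}$ and $R_1,R_2,R_3$ for the three ``in exactly two of the sets'' regions, the displayed inclusions say that each of $\overline{a,b},\overline{a,c},\overline{b,c}$ is the disjoint union of $P$ with two of the $R_i$; comparing the (equal) cardinalities forces $|R_1|=|R_2|=|R_3|$. Hence it suffices to show that one of these regions, say $R:=\overline{a,c}\cap\overline{b,c}\setminus\overline{a,b}$, is empty, for then $R_1=R_2=R_3=\emptyset$ and $\overline{a,b}=\overline{a,c}=\overline{b,c}=P$.

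To show $R=\emptyset$, suppose $x\in R$. Then $a\ne c$ by hypothesis and $x\notin\{a,c\}$, since $a,c\in\overline{a,b}$ (here the hypothesis $c\in\overline{a,b}$ is used) while $x\notin\overline{a,b}$. As $x\in\overline{a,c}$ and $\De$ is supersimple there is a unique line $\ell_1=\{a,c,x,y\}$, and $[a,c]$ interchanges $x$ and $y$; likewise $x\in\overline{b,c}$ gives a unique line $\ell_2=\{b,c,x,z\}$ with $[b,c]$ interchanging $x$ and $z$. Applying $[a,b]=[a,c]\cdot[b,c]$ to $x$, which is fixed by $[a,b]$ because $x\notin\overline{a,b}$, gives $x=\bigl(x^{[a,c]}\bigr)^{[b,c]}=y^{[b,c]}$, so $[b,c]$ maps $y$ to $x$ and therefore $x$ to $y$; thus $z=y$. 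Now $\ell_1$ and $\ell_2$ share the three points $c,x,y$, contradicting supersimplicity unless $\ell_1=\ell_2$; but $\ell_1=\ell_2$ forces $a\in\{b,c,x,y\}$, and as $a\notin\{c,x,y\}$ this gives $a=b$, a contradiction. Hence $R=\emptyset$.

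The crux is this final step: turning the abstract permutation identity into the geometric statement that an extra point of $\overline{a,c}\cap\overline{b,c}$ lying outside $\overline{a,b}$ would force two distinct lines to meet in three points. Everything else is bookkeeping, apart from pinning down the composition convention for move sequences so that $[a,b]=[a,c][b,c]$ is recorded with the right orientation, which is checked at once by evaluating both sides on the common line through $a,b,c$.
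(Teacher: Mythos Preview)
Your proof is correct and follows essentially the same route as the paper: both arguments track a point $x$ through the identity $[c,a][a,b][b,c]=1$, use that the fourth point on the line through $a,c,x$ is forced to coincide with the fourth point on a line through $b,c,x$ (or $a,b,x$), and obtain a contradiction to supersimplicity from two lines sharing three points, finishing with the equality $|\overline{a,b}|=|\overline{a,c}|=|\overline{b,c}|=2\lambda+2$. The only difference is organisational: the paper proves $\overline{a,c}\subseteq\overline{a,b}\cap\overline{b,c}$ directly by handling both ``$x\notin\overline{a,b}$'' and ``$x\notin\overline{b,c}$'' at once, whereas you first extract the three inclusions $\overline{x,y}\subseteq\overline{x,z}\cup\overline{y,z}$ from support considerations and then use the equal-cardinality Venn diagram argument to reduce to emptying a single ``exactly two'' region---a neat way to avoid treating the symmetric second subcase explicitly.
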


\begin{proof} Let $g=[c,a,b,c]$ and $x\in\overline{a,c}$, so $\{a,c,x,y\}$ is a line for some $y\in\Omega\backslash\{a,c,x\}$.  
If $x\notin \overline{a,b}\cup \overline{b,c}$, then $y^g=x$, which is a contradiction. If $x\notin \overline{a,b}\cap \overline{b,c}$,
then one of $\{a,b,x,y\}$ or $\{b,c,x,y\}$ is a line, contradicting pliability. Thus, as $|\overline{a,c}|=|\overline{a,b}|=|\overline{b,c}|=2\lambda+2$, the result holds.
\end{proof}

Let $\mathcal{D}=(\Omega,\B)$ be a $2-(n,4, \lambda)$ design.  Then, for $r,s\in\Omega$, with  $r\neq s$, let
$\mathcal{B}(r,s)$ denote the set of $\lambda$ lines in $\mathcal{B}$ that contain both $r$ and $s$.

\begin{Lem}\label{l:bool}
Let $(\Omega,\B)$ be a supersimple $2-(n,4, \lambda)$ design with the property
that for all distinct pairs $a,b\in\Omega$ and for all $c\in\overline{a,b}$, $[c,a,b,c]=1$.
Then $\mathcal{D}_{a,b}=(\Omega_{a,b},\B_{a,b})$ is an $SQS(2\lambda+2)$,
where $\Omega_{a,b}=\overline{a,b}$ and 
$$\B_{a,b}=\{\B(r,s)\,|\, r,s\in\overline{a,b}, r\neq s\}.$$
Moreover, $\mathcal{D}_{a,b}$ is a Boolean quadruple system of order 
$2^{\alpha+1}$ for some $\alpha > 0$. Consequently, $\lambda=2^\alpha-1$.
\end{Lem}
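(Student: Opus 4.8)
The plan is to show that $\De_{a,b}$ is a Steiner quadruple system on the point set $\overline{a,b}$ and then invoke the classification of \emph{Boolean} quadruple systems (equivalently, the fact that an $SQS(v)$ in which every derived triple system is generated by an affine $3$-space structure has $v$ a power of $2$). Write $\Omega_{a,b}=\overline{a,b}$, which has size $2\lambda+2$ by the remark following \eqref{e:elem move}. First I would verify that $\B_{a,b}$ is well defined: for distinct $r,s\in\overline{a,b}$ the $\lambda$ lines of $\B$ through $r,s$ are contained in $\overline{a,b}$. This is exactly the content of Lemma~\ref{l:oneall}, applied repeatedly: starting from $a,b$ one shows $\overline{a,c}=\overline{a,b}$ for every $c\in\overline{a,b}$, and hence $\overline{r,s}=\overline{a,b}$ for all distinct $r,s\in\overline{a,b}$; in particular the support of $[r,s]$ lies inside $\overline{a,b}$, so $\B(r,s)\subseteq\overline{a,b}$. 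Thus $(\Omega_{a,b},\B_{a,b})$ is a collection of $4$-subsets of a $(2\lambda+2)$-set in which every pair lies in exactly $\lambda$ blocks; supersimplicity of $\De$ forces any three points to determine the fourth, so in fact every triple from $\Omega_{a,b}$ lies in exactly one block, i.e. $\De_{a,b}$ is an $SQS(2\lambda+2)$.

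Next I would show $\De_{a,b}$ is \emph{Boolean}. The hypothesis $[c,a,b,c]=1$ for all relevant triples should be re-read inside the subdesign: for any three distinct points $r,s,t\in\Omega_{a,b}$ we have $[t,r,s,t]=1$ (since $t\in\overline{r,s}=\overline{a,b}$), and this says precisely that the elementary moves $[r,s]$, restricted to $\Omega_{a,b}$, satisfy the defining relations of the group generated by an affine $\mathbb{F}_2$-space acting by translations — concretely, $[r,s]$ and $[s,t]$ commute-to-$[r,t]$ in the sense that $[r,s,t,r]=1$, which is the characterising property used in \cite[Section 2]{Puzz} for Boolean quadruple systems. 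I would therefore cite the characterisation of Boolean $SQS$ from \cite{Puzz} (or the standard fact that an $SQS(v)$ all of whose "defect groups" are elementary abelian $2$-groups is the Boolean system on $(\mathbb{F}_2)^k$) to conclude $\Omega_{a,b}$ carries the structure of $(\mathbb{F}_2)^{\alpha+1}$ with blocks the affine planes $\{x,y,z,x+y+z\}$, for some $\alpha\geq 1$. Counting points then gives $2\lambda+2=2^{\alpha+1}$, i.e. $\lambda=2^\alpha-1$, which is the final assertion.

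The main obstacle is the identification of $\De_{a,b}$ as a \emph{Boolean} quadruple system rather than merely some $SQS(2\lambda+2)$: an arbitrary $SQS$ need not have order a power of $2$, so the relation $[t,r,s,t]=1$ must be leveraged to pin down the full affine structure. The cleanest route is to fix a base point $e\in\Omega_{a,b}$, set $\iota_x:=[e,x]$ for $x\in\Omega_{a,b}\setminus\{e\}$ (and $\iota_e:=1$), and prove that $x\mapsto\iota_x$ identifies $\Omega_{a,b}$ with an elementary abelian $2$-group: the relation $[e,x,y,e]=1$ combined with Corollary-style manipulations as in Lemma~\ref{l:oneall} gives $\iota_x\iota_y=\iota_{x\star y}$ for a well-defined binary operation $\star$ on $\Omega_{a,b}$, commutativity and involutivity are immediate from $[x,y]=[y,x]$ and $[x,y]^2=1$, and associativity follows from supersimplicity forcing $x\star(y\star z)=(x\star y)\star z$ to be the unique fourth point of the block through any three of them. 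Once $(\Omega_{a,b},\star)\cong(\mathbb{F}_2)^{\alpha+1}$ is established, the blocks are forced to be the $2$-flats, so $\De_{a,b}$ is Boolean and $|\Omega_{a,b}|=2^{\alpha+1}$; I would present this group-building step in detail, since everything else is bookkeeping, and then deduce $\lambda=2^\alpha-1$ by the point count.
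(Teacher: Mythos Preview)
Your argument that $\De_{a,b}$ is an $SQS(2\lambda+2)$ is correct and matches the paper's: repeated use of Lemma~\ref{l:oneall} gives $\overline{r,s}=\overline{a,b}$ for all distinct $r,s\in\overline{a,b}$, and supersimplicity then forces every triple to lie in a unique block.

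For the Boolean step, the paper's route is shorter than either of yours. Since $\overline{r,s}=\Omega_{a,b}$ for \emph{all} pairs $r,s$, every generator $[x,r,s,x]$ of the hole stabilizer $\pi_x(\De_{a,b})$ has $x\in\overline{r,s}$ and is therefore trivial by hypothesis; hence $\pi_x(\De_{a,b})=1$, and \cite[Theorem~B]{Puzz} (designs with trivial hole stabilizer are Boolean) finishes immediately. Your first suggestion, citing a characterisation from \cite{Puzz}, is really this argument in disguise, but you should phrase it via the \emph{triviality} of $\pi_x(\De_{a,b})$ rather than via ``defect groups being elementary abelian''.

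Your second route---building the group structure directly---glosses over the hard step. From $[e,x,y,e]=1$ one gets $\iota_x\iota_y=[x,y]$, but you then need $[x,y]=\iota_w$ where $\{e,x,y,w\}$ is the block through $e,x,y$. This closure statement does \emph{not} drop out of the three--term relations $[c,a,b,c]=1$ by the ``Corollary-style manipulations'' you describe: chasing those relations only cycles among expressions like $[e,w][x,y]=[e,x][y,w]=[e,y][x,w]$ without collapsing to $1$. Establishing $\iota_x\iota_y=\iota_w$ is precisely the content of \cite[Theorem~B]{Puzz}, so your ``cleanest route'' amounts to reproving that theorem; if you want to go this way you must supply that argument in full rather than asserting it.
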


\begin{proof} Let $y,r,s$ be three distinct points in $\overline{a,b}$. We show that $y,r,s$ lie in a unique element of $\B_{a,b}$. Suppose first that both $a$ and $b$ lie in the set $\{y,r,s\}$, with $r=a$ and $s=b$ say.
As $y\in\overline{a,b}$, there exists a line $\ell\in\mathcal{B}$ (which is necessarily in $\B(a,b)$) that contains all three points, and by pliability,
this line is unique. Secondly, suppose that at most one of $a,b$ lies in $\{y,r,s\}$, so we may assume that $a,b\notin\{r,s\}$.
Then $[r,a,b,r]=[s,a,b,s]=1$, and by Lemma \ref{l:oneall}, 
$\overline{a,r}=\overline{a,b}=\overline{a,s}$, so $s\in\overline{a,r}$.  Now, by supposition, $[s,a,r,s]=1$,
from which we deduce that $\overline{r,s}=\overline{a,b}$. 
Thus $y\in\overline{a,b}\backslash\{r,s\}=\overline{r,s}\backslash\{r,s\}$, and so $y,r,s$ are contained in a line in $\B$ (which is in $\B(r,s)$)
and by pliability, this line is unique. Therefore $\mathcal{D}_{a,b}$ forms an $SQS(2\lambda+2)$, and hence,
a supersimple $2-(2\lambda+2,4,\lambda)$ design.

As $y\in\overline{r,s}$, $[y,r,s,y]=1$ by supposition, and because $y,r,s$ were arbitrary,
we conclude that $\pi_x(\De_{a,b})=1$ for each $x\in\overline{a,b}$.  Hence, 
$\De_{a,b}$ is a Boolean quadruple system of order $2^\alpha$ for some $\alpha > 0$ by \cite[Theorem B]{Puzz}.
\end{proof}

\begin{proof}[Proof of Proposition~\ref{p:bool}]
The first statement of the proposition is a consequence of Lemma \ref{l:bool}. Thus it remains to show that the pair $(\Omega, \Lambda)$ is a 
$2-(n,2^{\alpha+1},1)$ design. But each pair of elements $a,b \in \Omega$ is contained in $\overline{a,b}$ and if there exist another pair $x,y \in \Omega$ such that $a,b \in \overline{x,y}$ then $\overline{x,y}=\overline{a,b}$, as is shown in the proof of Lemma \ref{l:bool}. Consequently $\overline{a,b}$ is the unique element of $\Lambda$ that contains $\{a,b\}$. 
\end{proof}

We record a corollary to Proposition \ref{p:bool}:

\begin{Cor}
Suppose that $\De$ is a supersimple $2-(n,4,\lambda)$ design, and that $G$ contains no non-trivial elements of the form $g=[\infty, a, b, \infty]$ where $\infty\in\overline{a,b}$. If $G$ contains $\Alt(n-1)$ then $G=\Alt(n-1)$ 
\end{Cor}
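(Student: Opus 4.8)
The plan is to show that, under the hypotheses of Proposition~\ref{p:bool}, every generator of $G$ acts as an even permutation, so that $G\leq\Alt(n-1)$; combined with the assumption $G\geq\Alt(n-1)$ this forces equality.

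First I would invoke Proposition~\ref{p:bool}: since $G$ contains no non-trivial element of the form $[\infty,a,b,\infty]$ with $\infty\in\overline{a,b}$, we conclude that $\lambda=2^\alpha-1$ for some positive integer $\alpha$. In particular $\lambda+1=2^\alpha$ is even. Next, recall from \eqref{e:elem move} that for any pair of distinct points $x,y\in\Omega$ the elementary move $[x,y]$ is a product of exactly $\lambda+1$ transpositions; since $\lambda+1$ is even, $[x,y]\in\Alt(\Omega)$.

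Now $G=\pi_\infty(\De)$ is generated by the closed move sequences $[\infty,a,b,\infty]$ for $a,b\in\Omega\setminus\{\infty\}$, by \cite[Lemma 3.1]{Puzz}. Each such element equals $[\infty,a]\cdot[a,b]\cdot[b,\infty]$, a product of three elementary moves, hence a product of $3(\lambda+1)$ transpositions, which is even. Every generator of $G$ therefore lies in $\Alt(\Omega)$ and, being a closed move sequence based at $\infty$, fixes the point $\infty$; consequently $G\leq\Alt(\Omega\setminus\{\infty\})\cong\Alt(n-1)$. Together with the hypothesis $G\geq\Alt(n-1)$ this yields $G=\Alt(n-1)$.

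There is no real obstacle here: the argument is essentially a parity count, and the only substantial input is the conclusion $\lambda=2^\alpha-1$ of Proposition~\ref{p:bool} (equivalently, that $\lambda$ is odd). The one point requiring a moment's care is that the parity of $[\infty,a,b,\infty]$ as an element of $\Sym(\Omega)$ agrees with its parity as an element of $\Sym(\Omega\setminus\{\infty\})$, which is immediate because such a move sequence fixes $\infty$.
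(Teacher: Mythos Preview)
Your proof is correct and takes essentially the same approach as the paper's own proof: invoke Proposition~\ref{p:bool} to conclude that $\lambda$ is odd (equivalently $\lambda+1$ is even), deduce that the generators of $G$ are even permutations, and hence $G\leq\Alt(n-1)$. The paper compresses this into two sentences, while you spell out the transposition count explicitly, but the argument is the same.
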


\begin{proof}
Proposition \ref{p:bool} implies that $\lambda = 2^\alpha-1$ for some positive integer $\alpha$ and, in particular, $\lambda$ is odd. Therefore $G$ is generated by even permutations and since $\Alt(n-1) \leq G \leq \Sym(n-1)$, the result follows.
\end{proof}

\subsection{Proving Theorem D}

Our job now is to prove Theorem D, and to do this we will make heavy use of Proposition~\ref{p:bool}. We will also need to make use of Theorem~E part (2), a short proof of which is given in Section~\ref{s: imprim}. Note that although the proof of part (4) of Theorem~E makes use of Theorem D, the earlier parts do not.

We begin by recording an immediate corollary.

\begin{Cor}\label{c: bool}
Suppose that a hole stabilizer $G=\pi_\infty(\De)$ contains no non-trivial elements of the form $g=[\infty, a, b, \infty]$ where $\infty\in\overline{a,b}$. Suppose, furthermore, that $G$ does not equal $\Alt(n-1)$. Then $\lambda=2^{\alpha}-1$ for some integer $\alpha$ and (setting $k=2^{\alpha+1})$, $n=k, k^2-k+1, 2(k^2-k)+1, k^2$ or $2k^2-k$. If $n=k$ then $G$ is trivial; otherwise $G$ is primitive.
\end{Cor}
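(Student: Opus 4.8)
The plan is to combine Proposition~\ref{p:bool} with Theorem~E~(2) and the counting identity that the design $(\Omega,\Lambda)$ imposes. By Proposition~\ref{p:bool} we already know $\lambda = 2^\alpha-1$ and, setting $k = 2^{\alpha+1}$, that $(\Omega,\Lambda)$ is a $2$-$(n,k,1)$ design, i.e.\ a Steiner system $S(2,k,n)$. I would first treat the degenerate case: if $n=k$ then $\overline{a,b}=\Omega$ for every pair $a,b$, so $\De=\De_{a,b}$ is itself a single Boolean quadruple system, hence (by \cite[Theorem B]{Puzz}, as used in the proof of Lemma~\ref{l:bool}) $G=\pi_\infty(\De)$ is trivial. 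So assume $n>k$.

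Next I would observe that, since $\De_{a,b}$ is a proper subdesign through any pair of points, the groupoid structure restricts: for $\infty$ and a pair $\{a,b\}$ with $\infty\notin\overline{a,b}$ one still gets elements $[\infty,a,b,\infty]\neq 1$, but the hole stabilizer $G$ is now a group which, by Theorem~E~(2), is primitive on $n-1$ points as soon as $n > 9\lambda+1$. The key point is then to bound $\mu(G)$, the minimal degree of $G$. Every generator $[\infty,a,b,\infty]$ is a product of transpositions supported inside $\overline{\infty,a}\cup\overline{\infty,b}\cup\overline{a,b}$ — more precisely, as in the $\lambda\le 2$ analysis of \cite{Puzz}, a nontrivial such element moves at most a bounded multiple of $k$ points (the relevant $\overline{\cdot,\cdot}$'s each have size $2\lambda+2=k$, and one checks the support of $[\infty,a,b,\infty]$ is contained in a union of two or three of them). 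Hence $\mu(G) = O(k)$, while $n$ can be computed from the Steiner system parameters: the replication number of a point in $(\Omega,\Lambda)$ is $r=(n-1)/(k-1)$, and double counting pairs/blocks forces relations among $n$, $k$, $r$.

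Then I would invoke Theorem~\ref{t: ls}: since $G$ is primitive and (we may assume, towards the final statement) does not contain $\Alt(n-1)$, either $\mu(G)\geq \tfrac13(n-1)$ — which combined with $\mu(G)=O(k)$ forces $n$ to be $O(k)$, and then the Steiner-system arithmetic pins $n$ down to a short explicit list — or $n-1$ has the special form $\binom{m}{\ell}^s$ or $6^s$. In the latter sub-case one uses Lemma~\ref{t: catalan} (the Catalan-type input): the Steiner system constraints, together with $n-1$ being a perfect power of that shape, force $s=1$, and then $n-1 = \binom m\ell$ with the additional divisibility $k-1 \mid n-1$ and the Fisher-type inequality cut the possibilities down to exactly $n = k^2-k+1$ (projective plane of order $k-1$, i.e.\ $r=k$), $n=2(k^2-k)+1$, $n=k^2$ (affine plane, $r=k+1$), or $n=2k^2-k$. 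The upshot is precisely the list $n\in\{k,\,k^2-k+1,\,2(k^2-k)+1,\,k^2,\,2k^2-k\}$, with $G$ primitive in every case except $n=k$.

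The main obstacle is the middle step: pinning down exactly which Steiner-system parameter sets survive. Getting a clean bound $\mu(G)\le ck$ for an explicit small constant $c$, and then showing that $\mu(G)\ge\tfrac13(n-1)$ together with the arithmetic of $S(2,k,n)$ leaves only those five values of $n$ (and that the $\binom m\ell^s$/$6^s$ exceptional cases collapse via Lemma~\ref{t: catalan} to the same list), is where the real work lies — the rest is bookkeeping with Proposition~\ref{p:bool} and Theorem~E~(2). I would expect the detailed support computation for $[\infty,a,b,\infty]$, and the case analysis splitting on whether $\infty$, $a$, $b$ are collinear in the various subdesigns, to be the most delicate part to write carefully.
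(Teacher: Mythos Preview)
Your plan has the right ingredients but inverts the logical order in a way that creates a genuine gap. The paper's proof of this corollary is much more direct than what you sketch: it never invokes the case split in Theorem~\ref{t: ls}, nor Lemma~\ref{t: catalan}. Instead it uses only the \emph{universal} inequality $\mu(H)\ge 2(\sqrt{d}-1)$ from Theorem~\ref{t: ls} (valid in both cases), together with the explicit support bound $6\lambda+2=3k-4$ on the generators, to obtain $n\le 9\lambda^2+12\lambda+5<3k(k-1)$. From there it is pure Steiner arithmetic: the conditions $k-1\mid n-1$ and $k(k-1)\mid n(n-1)$ with $k$ a power of $2$, split on the parity of $n$, give exactly the four nontrivial values listed. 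Fisher's inequality (which you only allude to at the end) is used at the start, not the end: it gives $n>k^2-k>9\lambda+1$, which is what triggers Theorem~E~(2) and yields primitivity.

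The gap in your route is the ``latter sub-case'': you claim that if $n-1=\binom{m}{\ell}^s$ or $6^s$ then Lemma~\ref{t: catalan} forces $s=1$, and that $k-1\mid n-1$ plus a Fisher-type inequality then cuts the list down to the four values. Neither step is justified as stated. Lemma~\ref{t: catalan} applies to equations $p^a\pm 1=2^b$; to use it you need a specific factorisation of $n-1$ in which one factor is $2^{\alpha+1}\pm 1$, and that factorisation is exactly what the paper extracts \emph{after} already knowing $n\in\{k^2-k+1,\,2(k^2-k)+1,\,k^2,\,2k^2-k\}$ (see Lemmas~\ref{l: b} and~\ref{l: c}). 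Moreover, Fisher gives a \emph{lower} bound on $n$, not the upper bound you need to make the list finite; in your first sub-case the bound $n=O(k)$ actually contradicts Fisher for large $k$, so essentially all of the listed values would have to come from the exceptional product-action case, for which you have no upper bound on $n$ at all. The fix is simply to replace the case split by the universal $2(\sqrt{d}-1)$ bound, which immediately gives $n<3k(k-1)$ and makes the divisibility sieve finite.
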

\begin{proof}
We apply Proposition~\ref{p:bool} to deduce the existence of a $2-(n,k,1)$ design $(\Omega, \Lambda)$. Suppose that the design is trivial, i.e. $n=k$. Then Proposition~\ref{p:bool} implies that $\De$ is the Boolean design and \cite[Theorem B]{Puzz} implies that $G$ is trivial.
 
Suppose next that the associated $2-(n,k,1)$ design is non-trivial, i.e. $n>k$. Observe that $k=2\lambda+2$ and now Fisher's inequality implies that 
\[n>k^2-k>9\lambda+1. 
\]
 Thus, by Theorem~E (2), $G$ is primitive.

We know that $G$ is generated by elements of the form $[\infty, a, b, \infty]$ and these have support at most $6\lambda+2$. Combining this fact with the inequality $\mu(H)\geq 2 (\sqrt{d}-1)$ of Theorem~\ref{t: ls} (and setting $d=n-1$) we obtain
\[n\leq 9\lambda^2+12\lambda+5<3k(k-1).\]
We also have the conditions that $k-1$ divides $n-1$ and $k(k-1)$ divides $n(n-1)$. Note that $k$ is a power of $2$.

If $n$ is odd, then $k(k-1)$ divides $n-1$ and we conclude that either $n=k^2-k+1$ or $2(k^2-k)+1$. If $n$ is even, then $k-1$ divides $n-1$ and $k$ divides $n$. Hence $n-1=(1+ak)(k-1)$ for some $a > 0$ and we obtain that $n=k^2$ or $2k^2-k$ as required.
\end{proof}

\begin{Lem}\label{l: a}
 Suppose that a hole stabilizer $G=\pi_\infty(\De)$ contains no non-trivial elements of the form $g=[\infty, a, b, \infty]$ where $\infty\in\overline{a,b}$. Suppose, furthermore, that $G$ is neither trivial nor does it equal $\Alt(n-1)$, and that $\lambda>1$.
 Then the following hold:
\begin{enumerate}
 \item $G$ is primitive.
 \item $\lambda=2^\alpha-1$ for some integer $\alpha\geq 2$ and (setting $k=2^{\alpha+1})$, 
 \[n= k^2-k+1, 2(k^2-k)+1, k^2 \textrm{ or } 2k^2-k.\]
 \item There exist integers $m,\ell,r$ ($m\geq 5$, $1\leq\ell<\frac12m$) such that $n-1={m\choose \ell}^r$ or $6^r$. Furthermore $(\Alt(m))^r\unlhd G \leq \Sym(m)\wr \Sym(r)$ where $m\geq 5$ and the wreath product acts, via the product action on $\Omega=\Delta^r$ and $\Delta$ is either the set of $\ell$-subsets of $\{1,\dots, m\}$ or $m=|\Delta|=6$.
\end{enumerate}
\end{Lem}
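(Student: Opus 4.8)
The plan is to derive essentially everything from Corollary~\ref{c: bool} and Theorem~\ref{t: ls}. Parts (1) and (2) require no new work: the hypotheses of Corollary~\ref{c: bool} are exactly our standing assumptions together with $G\neq\Alt(n-1)$, so that corollary gives $\lambda=2^\alpha-1$, $k:=2^{\alpha+1}$, $n\in\{k,\,k^2-k+1,\,2(k^2-k)+1,\,k^2,\,2k^2-k\}$, and $G$ trivial iff $n=k$. Since we assume $G\neq 1$, the value $n=k$ is excluded, so $G$ is primitive and $n$ is one of the four remaining values; that is parts (1) and half of (2), and $\lambda>1$ forces $2^\alpha-1>1$, i.e.\ $\alpha\geq 2$, completing (2). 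All the content is therefore in part (3).

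For (3), I would first note that $G$ is a primitive subgroup of $\Sym(n-1)$ that does not contain $\Alt(n-1)$: since $\lambda=2^\alpha-1$ is odd, each elementary move $[a,b]=(a,b)\prod_{i=1}^\lambda(a_i,b_i)$ is a product of $\lambda+1$ transpositions and hence an even permutation, so every element of $G$, being a product of elementary moves and fixing $\infty$, lies in $\Alt(n-1)$; as $G\neq\Alt(n-1)$ this gives $G\lneq\Alt(n-1)$. Now Theorem~\ref{t: ls} applies: either $(\Alt(m))^r\unlhd G\leq\Sym(m)\wr\Sym(r)$ in its product action on $\Delta^r$, with $\Delta$ the set of $\ell$-subsets of $\{1,\dots,m\}$ ($1\leq\ell<\tfrac12 m$) or $m=|\Delta|=6$ — in which case $n-1=\binom{m}{\ell}^r$ or $6^r$ and (3) holds exactly as stated — or else $\mu(G)\geq\tfrac13(n-1)$. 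Everything thus reduces to ruling out this last inequality.

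To do that I would use that $G$ is generated by the elements $[\infty,a,b,\infty]$ with $\infty\notin\overline{a,b}$ (those with $\infty\in\overline{a,b}$ being trivial by hypothesis), each of which has support of size at most $6\lambda+2$: its support lies in the union of the three blocks $\overline{\infty,a},\overline{a,b},\overline{b,\infty}$ of the linear space $(\Omega,\Lambda)$ produced by Proposition~\ref{p:bool}, and these are distinct blocks pairwise meeting in the single points $a,b,\infty$, so the union has $3k-3$ points, i.e.\ $6\lambda+2$ after deleting $\infty$. Since $G\neq 1$, some such generator is non-trivial, so $\mu(G)\leq 6\lambda+2$; combined with $\mu(G)\geq\tfrac13(n-1)$ this gives $n\leq 18\lambda+7$. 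On the other hand part (2) gives $n\geq k^2-k+1=4\lambda^2+6\lambda+3$. For $\lambda\geq 7$ — the only values of $\lambda=2^\alpha-1$ with $\alpha\geq 2$ apart from $3$ — the latter already exceeds $18\lambda+7$, a contradiction; so $\mu(G)\geq\tfrac13(n-1)$ can occur only when $\lambda=3$, $k=8$, and then $n\leq 61$ singles out $n=57$ (degree $n-1=56$) among the four options.

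The hard part is this single residual case $\lambda=3$, $n=57$: a primitive $G\leq\Sym(56)$, not containing $\Alt(56)$, generated by elements of support at most $20$ (so $\mu(G)\in\{19,20\}$), with $(\Omega,\Lambda)$ a projective plane of order $7$. I would finish this by inspection of the (short, known) list of primitive groups of degree $56$: the only one falling under the wreath alternative of Theorem~\ref{t: ls} is $\Alt(8)$ or $\Sym(8)$ acting on $3$-subsets, for which $\mu\geq 30>20$, so such a group cannot be generated by elements of support at most $20$; and one then checks that no remaining primitive group of degree $56$ can arise as the hole stabilizer of such a design — a short verification, in the spirit of the $\lambda=3$ discussion of Section~\ref{s: l3}. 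I expect this degree-$56$ check to be the only delicate point, the remainder being a routine combination of Corollary~\ref{c: bool} and Theorem~\ref{t: ls}.
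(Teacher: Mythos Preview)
Your proposal is correct and follows the same route as the paper: derive (1) and (2) from Corollary~\ref{c: bool}, then for (3) compare the support bound $6\lambda+2=3k-4$ against the threshold $\tfrac13(n-1)$ in Theorem~\ref{t: ls}, which forces the wreath-product alternative except in the single residual case $(\lambda,n)=(3,57)$. For that case the paper is slightly more direct than you suggest: a single GAP verification shows that no primitive group of degree $56$ has a non-trivial element of support at most $20$, which excludes $n=57$ outright --- there is no need to separate off the wreath candidate $\Alt(8)$ on $3$-subsets or to invoke design-theoretic constraints on what can ``arise as a hole stabilizer''.
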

\begin{proof}
We apply Corollary~\ref{c: bool} and observe that, since $G$ is not trivial, $n\neq k$. Thus $G$ is primitive and (1)  and (2) hold. 


Now observe that $k=2\lambda+2$ and that $G$ contains non-trivial elements with support of size at most $6\lambda+2=3k-4$. If $\lambda\neq 3$, then all four possible values for $n$ are strictly greater than $9k-11$, hence Theorem~\ref{t: ls} yields (3). 

If $\lambda=3$, then three of the possible values for $n$ are strictly greater than $9k-11=61$ and Theorem~\ref{t: ls} yields (3).  To rule out the final case (when $n=k^2-k+1=57$) we use GAP \cite{GAP} to confirm that none of the primitive groups of degree $56$ contain non-trivial elements with support of size at most $6\lambda+2=20$, thus this situation can be excluded entirely.
\end{proof}

\begin{Lem}\label{l: b}
Let $k=2^{\alpha+1}$ for some integer $\alpha\geq 2$, and suppose that $d=k^2-k$ or $2(k^2-k)$. Then $d\neq 6^r$ and if $d={m\choose \ell}^r$ for positive integers $m,\ell$ and $r$ with $\ell\leq\frac{m}{2}$, then either $(m,\ell,r)=(d,1,1)$ or else $(d,k)=(57,8)$.
\end{Lem}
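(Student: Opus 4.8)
## Proof Proposal for Lemma \ref{l: b}

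The plan is to exploit the extremely restrictive arithmetic of the equation $d = \binom{m}{\ell}^r$ when $d = k^2-k = k(k-1)$ or $2k(k-1)$ with $k = 2^{\alpha+1}$ a power of $2$ exceeding $4$. The key observation is that $k-1$ is an odd number coprime to $k$, so the $2$-adic valuation of $d$ is exactly $\alpha+1$ (in the first case) or $\alpha+2$ (in the second). First I would dispose of the case $d = 6^r$: since $6^r = 2^r 3^r$, this would force $k(k-1)$ (or twice it) to be $3$-smooth, but $k-1 \equiv -1 \pmod 3$ is a power of $2$ minus one, and for $\alpha \geq 2$ one checks $k - 1 \geq 7$ is never a power of $3$ times a power of $2$ — a short finite check combined with the fact that $k-1$ is odd pins this down immediately.

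Next, for $d = \binom{m}{\ell}^r$, I would argue that $r = 1$ in all but a sporadic case. If $r \geq 2$ then $d$ is a perfect $r$-th power, so in particular it is a perfect square (taking the even part of $r$, or directly if $r=2$; for $r \geq 3$ it is an even stronger condition). But $d = k(k-1)$ is a product of two consecutive integers with $k > 1$, hence is never a perfect square (the only square of that form is $0$). For $d = 2k(k-1)$, writing $k = 2^{\alpha+1}$ gives $d = 2^{\alpha+2}(2^{\alpha+1}-1)$; for this to be an $r$-th power with $r \geq 2$ we need $r \mid \alpha+2$ and the odd part $2^{\alpha+1}-1$ to be a perfect $r$-th power. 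Since $r \geq 2$ and $2^{\alpha+1}-1 > 1$, this forces $2^{\alpha+1}-1$ to be at least a perfect square; but a Mersenne-type number $2^s - 1$ is a perfect power only for small $s$ (indeed $2^s-1$ square-free considerations, or Mihăilescu/Catalan as invoked via Lemma \ref{t: catalan}, handle this — $2^s - 1 = y^t$ with $t \geq 2$ has no solutions for $s \geq 2$). So $r = 1$, giving $(m,\ell,r) = (d,1,1)$ unless we are forced into a genuinely sporadic situation.

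The sporadic case arises from small binomial coincidences, so the last step is to enumerate, for small values of $\alpha$, whether $k(k-1)$ or $2k(k-1)$ equals $\binom{m}{\ell}$ with $2 \leq \ell \leq m/2$ and $\ell \neq 1$. For $\alpha = 2$ we have $k = 8$, so $k(k-1) = 56 = \binom{8}{3}$ — exactly the exceptional pair $(d,k) = (57, 8)$ claimed (here $d = 56$, $n = d+1 = 57$); one must also check $2k(k-1) = 112$, which is not a binomial coefficient $\binom{m}{\ell}$ with $\ell \geq 2$. For $\alpha = 3$, $k = 16$: check $240$ and $480$ against the middle binomial coefficients — $240 = \binom{m}{2}$ would need $m(m-1) = 480$, no integer solution; $\binom{m}{3} = 240$ gives no solution; and so on. For $\alpha \geq 4$ the values $k(k-1) \geq 240$ grow, and since $\binom{m}{\ell} \geq \binom{m}{2} = m(m-1)/2$ for $2 \leq \ell \leq m/2$, any such representation forces $m$ of a specific size; a uniform argument is that if $\binom{m}{\ell} = k(k-1)$ with $\ell \geq 2$, then $\ell = 2$ is essentially forced for size reasons once $k$ is moderately large (larger $\ell$ make $\binom{m}{\ell}$ grow too fast relative to its smoothness), and $\binom{m}{2} = k(k-1)$ means $m(m-1) = 2k(k-1)$, i.e. $m \approx k\sqrt{2}$, which is never an integer solution since $2k(k-1)$ is never of the form $m(m-1)$ — this last point again uses that products of consecutive integers are determined by their value.

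The main obstacle I anticipate is the clean handling of the case $\ell \geq 3$ (and $r = 1$) for general $\alpha$: ruling out $\binom{m}{\ell} = k(k-1)$ or $2k(k-1)$ uniformly requires either a growth/smoothness argument (the prime $k-1$ or a large prime factor of $2^{\alpha+1}-1$ must divide $\binom{m}{\ell}$, which constrains $m$) or falling back on finitely many cases after bounding $\ell$. I would handle this by noting that $k-1$ must divide $\binom{m}{\ell}$ and, being coprime to $k$, this plus the size constraint $d < 3k(k-1)$ inherited from Lemma \ref{l: a} sharply limits the configurations; the residual finite list is then checked, yielding only the stated exception $(d,k)=(57,8)$.
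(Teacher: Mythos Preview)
Your proposal has the right overall shape but contains a genuine gap and a couple of minor slips.

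Two small issues first. The claim that a perfect $r$-th power with $r\geq 2$ is ``in particular a perfect square'' is false for odd $r$ (e.g.\ $8=2^3$), so your treatment of $d=k(k-1)$ for odd $r\geq 3$ is incomplete; however, the coprimality-plus-Catalan argument you correctly give for $d=2k(k-1)$ applies verbatim here too, since $k$ and $k-1$ are coprime and $k-1=s_1^r$ then forces $r=1$ by Lemma~\ref{t: catalan}. Also, $k-1\equiv -1\pmod 3$ is not true in general (it depends on the parity of $\alpha+1$), though this does not matter once you argue via the odd part of $d$.

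The real gap is the case $r=1$, $\ell\geq 3$, which you rightly flag as ``the main obstacle''. Your proposed fix---forcing $m$ to be large via a large prime factor of $k-1=2^{\alpha+1}-1$---does not work, because such numbers can be very smooth (e.g.\ $2^{12}-1=3^2\cdot 5\cdot 7\cdot 13$, largest prime factor $13$, while $k=2^{12}$), so you get no useful lower bound on $m$. The paper instead exploits the $2$-part of $d$: since $2^{\alpha+1}\mid \binom{m}{\ell}$, Kummer's theorem gives $m\geq 2^{\alpha+1}=k$ (the number of base-$2$ carries when adding $\ell$ and $m-\ell$ is at most $\lfloor\log_2 m\rfloor$). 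With $m\geq k$ in hand, $\binom{k}{\ell}\leq \binom{m}{\ell}=d\leq 2k(k-1)$ immediately forces $\ell\leq 2$ or $k=8$. This is the idea you are missing.

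Finally, for $\ell=2$ the remark that ``products of consecutive integers are determined by their value'' does not by itself show that $2k(k-1)$ (or $4k(k-1)$) is never of the form $m(m-1)$; you must use the specific shape $m(m-1)=2^x(2^y-1)$ with $x>y$ and check that no two coprime factors of this product can differ by $1$.
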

\begin{proof}
Suppose that $d=s^r$ for some integer $s$ and observe that $d$ is a product of $k-1$ (an odd number) and a power of $2$. Thus $k-1=s_1^r$ for some integer $s_1$. Now Lemma~\ref{t: catalan} implies that $r=1$. One concludes immediately that $d\neq 6^r$.
 
Suppose that $d={m\choose \ell}$. Observe that $d$ is divisible by $2^{\alpha+1}$. It is trivial to observe that if $2^{\alpha+1}$ divides ${m\choose \ell}$, then $m\geq 2^{\alpha+1}$ and hence $$k(k-1) > \frac{k(k-1)\cdots (k-\ell+1)}{\ell!}.
$$
The inequality implies that either $\ell\leq 2$ or $k\leq 8$.

Suppose that $\ell \leq 2$. If $\ell =2$ then $m(m-1)=2^x(2^y-1)$, for some integers $x,y$ with $x > y$ which is absurd. Hence $\ell=1$ and the result follows. 

Finally, suppose that $k\leq 8$ and $\ell>2$. Then one obtains immediately that $k=m=8$, $\ell=3$, $d=57$ and the result follows.
\end{proof}

\begin{Lem}\label{l: c}
Let $k=2^{\alpha+1}$ for some integer $\alpha\geq 2$, and suppose that $d=k^2-1$ or $2k^2-k-1$. Then $d\neq 6^r$ and if $d={m\choose \ell}^r$ for positive integers $m,\ell$ and $r$, then $r=1$.
\end{Lem}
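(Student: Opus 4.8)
The plan is to reduce the statement to the single assertion that $d$ is never a perfect power of exponent $\ge 2$, and then to treat the two admissible values of $d$ separately, in each case exploiting the transparent factorisation of $d+1$. The $6^r$ clause is immediate: since $k=2^{\alpha+1}$ is even, both $d=k^2-1$ and $d=2k^2-k-1$ are odd, whereas $6^r$ is even for every $r\ge 1$. For the binomial clause, write $s:=\binom{m}{\ell}$; this is a positive integer, and $s>1$ since $d=s^r>1$, so it suffices to rule out $r\ge 2$.

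For $d=k^2-1$: because $k$ is even we have $d\equiv 3\pmod 4$, so $d$ is not a square and $r$ must be odd; hence $s$ is odd. Since $r$ is odd there is the usual factorisation $s^r+1=(s+1)\bigl(s^{r-1}-s^{r-2}+\cdots+1\bigr)$, whose second factor is a sum of $r$ odd terms and so is itself odd. But $s^r+1=d+1=k^2=2^{2\alpha+2}$ is a power of $2$, which forces this odd cofactor to equal $1$; thus $s^r=s$, and as $s>1$ we get $r=1$.

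For $d=2k^2-k-1$: here $d+1=k(2k-1)=2^{\alpha+1}(2^{\alpha+2}-1)$ is \emph{not} a power of $2$, so the shortcut of the previous case is unavailable and a size estimate is needed instead. Since $\alpha\ge 2$ we have $k\equiv 0\pmod 8$, hence $d\equiv 7\pmod 8$ is not a square, so again $r$ is odd and $s$ is odd. Suppose $r\ge 3$. By the Lifting-the-Exponent lemma, $v_2(s+1)=v_2(s^r+1)=v_2(d+1)=\alpha+1$, and since $s+1$ divides $d+1=2^{\alpha+1}(2^{\alpha+2}-1)$ this forces $s+1=2^{\alpha+1}e$ for some odd divisor $e$ of $2^{\alpha+2}-1$; in particular $s\ge 2^{\alpha+1}-1$. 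On the other hand $s^r=d<2k^2=2^{2\alpha+3}$, so $s<2^{(2\alpha+3)/3}$ because $r\ge 3$. Since $2^{\alpha+1}-1\ge 2^{(2\alpha+3)/3}$ for every $\alpha\ge 2$ (a direct check for $\alpha=2$, together with $2^{\alpha+1}-1\ge 2^{\alpha}\ge 2^{(2\alpha+3)/3}$ for $\alpha\ge 3$), this is a contradiction, so $r=1$.

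The two modular remarks and the closing numerical inequality are mechanical. The step I expect to need the most care is the second case: because $d+1$ is not a power of $2$ there, one cannot simply conclude that the odd cofactor of $s^r+1$ is trivial, and must instead extract $v_2(s+1)$ via Lifting-the-Exponent and then close the argument with the size comparison above. This also explains why Lemma~\ref{l: c} admits the clean conclusion $r=1$ with no exceptions, unlike Lemma~\ref{l: b}.
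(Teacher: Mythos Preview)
Your argument is correct, and it takes a genuinely different route from the paper's.

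The paper factors $d$ directly. For $d=k^2-1=(k-1)(k+1)$ the two factors are coprime (both odd, differing by $2$), so $k-1$ must itself be an $r$-th power; since $k-1=2^{\alpha+1}-1$, the Catalan-type Lemma~\ref{t: catalan} forces $r=1$. For $d=2k^2-k-1=(2k+1)(k-1)$ the same idea works when $\gcd(2k+1,k-1)=1$; in the remaining case the gcd is $3$, $\alpha+1$ is even, and the paper further factors $k-1=(\sqrt{k}-1)(\sqrt{k}+1)$, finds one of these coprime to the rest, and again invokes Lemma~\ref{t: catalan}.

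Your approach instead works with $d+1$ and never appeals to Lemma~\ref{t: catalan}. For $d=k^2-1$ your observation that $d+1=2^{2\alpha+2}$ is a pure power of $2$ collapses the argument to a single line via the factorisation $s^r+1=(s+1)\cdot(\text{odd})$. For $d=2k^2-k-1$ you replace the paper's coprimality-and-Catalan case analysis by a $2$-adic computation (Lifting-the-Exponent gives $v_2(s+1)=\alpha+1$, hence $s\ge 2^{\alpha+1}-1$) followed by the size bound $s^3\le d<2^{2\alpha+3}$.

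What each buys: the paper's method is uniform (same coprime-factor idea twice) and recycles a lemma already present; your first case is shorter and more transparent, while your second case trades a somewhat delicate $\gcd$ case split for a clean $2$-adic/size estimate. Either route is perfectly adequate here.
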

 \begin{proof}
Observe that $d$ is odd, and thus $d\neq 6^r$. Suppose first that
\[d=k^2-1=(k-1)(k+1)=s^r\]
for some positive integers $r$ and $s$. Then, since $k-1$ and $k+1$ are coprime, we conclude that $k-1=s_1^r$ for some positive integer $s_1$. Now Lemma~\ref{t: catalan} implies that $r=1$ as required.

Assume, then that $d=2k^2-k-1=s^r$ for some integer $r$. There are two cases. First, suppose that $2k+1$ and $k-1$ are coprime. Then $k-1=s_1^r$ for some integer $s_1$ and Lemma~\ref{t: catalan} implies that $r=1$. 

Second, suppose that $2k+1$ and $k-1$ are not coprime; then their highest common factor is $3$ and we conclude, moreover that $\alpha+1$ is even. In this case $k-1=(\sqrt{k}-1)(\sqrt{k}+1)$ and one of these two factors is indivisible by $3$.

Suppose first that $\sqrt{k}-1$ is indivisible by $3$. Then $\sqrt{k}-1$ is coprime to $2k+1$ and $\sqrt{k}+1$ and we conclude that $\sqrt{k}-1=x^r$ for some integer $x$. Now Lemma~\ref{t: catalan} implies that $r=1$ as required.

Suppose finally that $\sqrt{k}+1$ is indivisible by $3$. Then $\sqrt{k}+1$ is coprime to $2k+1$ and $\sqrt{k}-1$ and we conclude that $\sqrt{k}+1=x^r$ for some integer $x$. Now Lemma~\ref{t: catalan} and the fact that $\sqrt{k}+1$ is indivisible by $3$ implies that $r=1$ as required.
\end{proof}

\begin{Lem}\label{l: d}
 Suppose that $G$ is isomorphic to a subgroup of $\Sym(m)$ and consider the natural action of $G$ on the set of $\ell$-subsets of $\{1,\dots,m\}$. Then a non-trivial element of $G$ has support at least $2{{m-2}\choose{\ell-1}}$. 
\end{Lem}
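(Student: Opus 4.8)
The plan is to exhibit, for an arbitrary non-trivial $g\in G\leq\Sym(m)$, two \emph{disjoint} families of $\ell$-subsets, each of cardinality $\binom{m-2}{\ell-1}$, every member of which is displaced by $g$; adding the two counts gives the bound. The underlying principle is that a single pair $\{a,a^g\}$ with $a^g\neq a$ already forces many $\ell$-subsets to move, and that one can harvest two disjoint batches of them by using $g$ on one batch and $g^{-1}$ on the other.

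Concretely, since $g\neq 1$ I would first fix a point $a\in\{1,\dots,m\}$ with $b:=a^g\neq a$. The elementary fact I would record at the outset is that, for any $\ell$-subset $S$, one has $S^g=S$ if and only if $S^{g^{-1}}=S$. Now let $\mathcal F_1$ be the set of $\ell$-subsets $S$ with $a\in S$ and $b\notin S$: for such an $S$ we have $b=a^g\in S^g$ while $b\notin S$, so $S^g\neq S$. Let $\mathcal F_2$ be the set of $\ell$-subsets $S$ with $a\notin S$ and $b\in S$: since $b^{g^{-1}}=a$, we get $a\in S^{g^{-1}}$ while $a\notin S$, so $S^{g^{-1}}\neq S$, hence $S^g\neq S$ by the preceding remark. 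Thus every member of $\mathcal F_1\cup\mathcal F_2$ is moved by $g$. The families $\mathcal F_1$ and $\mathcal F_2$ are disjoint, because members of the first contain $a$ and members of the second do not; and each has size $\binom{m-2}{\ell-1}$, since once $a$ and $b$ are prescribed (one in, one out) the remaining $\ell-1$ elements of the subset are chosen freely from the $m-2$ points different from $a$ and $b$. Therefore $g$ moves at least $2\binom{m-2}{\ell-1}$ of the $\ell$-subsets, as claimed.

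I do not expect any genuine obstacle here: the argument is a short double-counting. The only place requiring a little care is the need to treat $\mathcal F_1$ via $g$ and $\mathcal F_2$ via $g^{-1}$ — this is precisely what makes the two families disjoint and so yields the factor of $2$ rather than merely the trivial bound $\binom{m-2}{\ell-1}$.
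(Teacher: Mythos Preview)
Your proof is correct and follows essentially the same approach as the paper: pick a moved point, exhibit two disjoint families of $\ell$-subsets of size $\binom{m-2}{\ell-1}$ each, and show every member is moved. The only cosmetic difference is that where you take $\mathcal F_2$ to be the sets containing $b$ but not $a$ and argue via $g^{-1}$, the paper instead introduces a third point $k:=b^g$ and takes its second family to be the sets containing $b$ but not $k$, arguing directly via $g$; disjointness in both cases comes from membership of $b$ (your $b$, the paper's $j$).
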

\begin{proof}
 Let $g$ be a non-trivial element of $G$ and let $i$ be an element that is moved by $G$. Thus $i^g=j$ with $j\neq i$. Let $k=j^g$ and observe that, although it is possible to have $i=k$, we know that $j\neq k$.
 
Now observe that any set containing $i$ but not $j$ lies in the support of $g$, and there are ${{m-2}\choose{\ell-1}}$ of these. Similarly any set containing $j$ but not $k$ lies in the support of $g$, and there are ${{m-2}\choose{\ell-1}}$ of these. The two types of set are distinct hence the result follows.
\end{proof}

We remark that if $g\in G$ is a transposition, then the support of $g$ in the given action is of size exactly $2{{m-2}\choose{\ell-1}}$. We are ready to prove Theorem~D.

\begin{proof}[Proof of Theorem~D]
If $\lambda=1$, then the result is a consequence of \cite[Theorem C]{Puzz}. If $G$ is trivial, then the result is a consequence of \cite[Theorem B]{Puzz}. Thus we assume that $\lambda>1$ and that $G$ is not trivial and we must show that $G=\Alt(n-1)$. 

Suppose, for a contradiction, that $G$ does not equal $\Alt(n-1)$. Then Lemma~\ref{l: a} implies that $G$ is primitive and, for each value of $\lambda$, gives four possible values for $n$. For two of these values Lemma~\ref{l: b} implies immediately that either $G$ is $\Alt(n-1)$ (and we are done), or else $(n,k)=(57,8)$. Now GAP \cite{GAP} confirms that none of the primitive groups of degree $56$ contain non-trivial elements with support of size at most $6\lambda+2=20$, thus this situation is excluded.

We are left with the possibility that $n=k^2$ or $2k^2-k$ where $k=2\lambda+2\geq 8$. Now Lemma~\ref{l: c} implies that $\Alt(m)\leq G \leq \Sym(m)$ for some $m\geq 5$ and that the action of $G$ on $n-1$ points is isomorphic to the natural action of $G$ on the set of $\ell$-subsets of $\{1,\dots, m\}$. We know that $G$ contains elements with support of size at most $s=6\lambda+2=3k-4$ and we observe that
\[
 n-1\geq k^2-1 \geq \frac19 s^2.
\]
Now Lemma~\ref{l: d} implies that $m$ and $\ell$ satisfy
\[
 {m\choose\ell} \geq \frac49{{m-2}\choose{\ell-1}}^2.
\]
This implies in turn that
\[
 m\geq \frac49 {{m-2}\choose{\ell-1}}
\]
and one concludes immediately that either $m\leq 8$ or $\ell-1=1$. 

Suppose first that $m\leq 8$. Then $n-1={m\choose\ell}\leq 70$ and we conclude that $k=8$ and $n=k^2$. But there does not exist $\ell$ such that $n-1=63= {m\choose\ell}$ for any $m\leq 8$ so this case can be excluded.

Thus we conclude that $\ell=2$. This implies that \[n-1= (2k+1)(k-1)=\frac12 m(m-1)\] and so
\[
 (2k+1)(2k-2)=m(m-1).
\]
Since $(m,m-1)=1$, this is clearly impossible for $k\geq 8$ and the result is proved.
\end{proof}

\section{Properties of Conway groupoids}\label{s:imprim}

In this section we prove Theorem E and throughout we operate under the suppositions of Theorem E. Note that parts of this theorem are already known: when $\lambda=1$ or $2$, Theorem~E is an immediate consequence of \cite[Theorem C]{Puzz}. Furthermore, part (1) of Theorem~E is Lemma 6.1 of \cite{Puzz}. Thus, to prove Theorem~E we can (and will) assume throughout that  $n>4\lambda+1$ and so $G:=\pi_\infty(\De)$ is transitive. 

\subsection{The imprimitive case}\label{s: imprim}

In this section we suppose that $G$ is imprimitive and that $\Delta$ is a block of size $k$; we will prove part (2) of Theorem~E. We need the following result from \cite{Puzz}.

\begin{Lem}\label{l: prim}
Let $n > 4\lambda+1$ and suppose that $G$ preserves a system of imprimitivity with $\ell$ blocks each of size $k$ (so that $n-1=k\ell$). Then at least one of the following holds:
\begin{enumerate}
\item[(i)] if $a,c\in \Omega$ lie in the same block of imprimitivity, then $\infty\in\overline{a,c}$;
\item[(ii)] $n\leq \frac{6\ell}{\ell-1}\lambda+1$.
 \end{enumerate}
\end{Lem}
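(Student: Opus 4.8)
The plan is to argue by contradiction: assume that \emph{both} (i) and (ii) fail and produce an absurdity. Since $n-1=k\ell$, the failure of (ii) is exactly the inequality $k(\ell-1)>6\lambda$, i.e.\ the union of all but one of the blocks contains more than $6\lambda$ points. Two structural facts drive the argument. First, because $n>4\lambda+1$, part (1) of Theorem~E gives that $G=\pi_\infty(\De)$ is transitive on $\Omega':=\Omega\setminus\{\infty\}$, hence also transitive on the set of $\ell$ blocks. Second, $G$ is generated by the elements $g_{x,y}:=[\infty,x,y,\infty]$ (\cite[Lemma~3.1]{Puzz}), each of which is a product of the three elementary moves $[\infty,x]$, $[x,y]$, $[y,\infty]$ with supports $\overline{\infty,x}$, $\overline{x,y}$, $\overline{y,\infty}$, each of size $2\lambda+2$. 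As these three supports pairwise share a point (namely $x$, $y$ and $\infty$) and $g_{x,y}$ fixes $\infty$, one obtains the basic estimate $|\supp(g_{x,y})\cap\Omega'|\le 6\lambda+2$.

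Suppose (i) fails: there is a block $\Delta$ and points $a,c\in\Delta$ with $\infty\notin\overline{a,c}$, and set $g:=g_{a,c}$. Since $\infty\notin\overline{a,c}$ is equivalent to $a\notin\overline{\infty,c}$ and to $c\notin\overline{\infty,a}$, the moves $[a,c]$, $[c,\infty]$, $[\infty,a]$ fix $\infty$, $a$, $c$ respectively, and a one-line computation shows that $g$ interchanges $a$ and $c$ while fixing $\infty$. Hence $g$ stabilises $\Delta$ and $\{a,c\}\subseteq\supp(g)\cap\Delta$, so $|\supp(g)\setminus\Delta|\le 6\lambda$. If (ii) also fails then $|\Omega'\setminus\Delta|=k(\ell-1)>6\lambda$, so there is a point $x\in\Omega'\setminus\Delta$ that is fixed by $g$.

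The crux — and the step I expect to be the main obstacle — is to turn this fixed point into a contradiction. The natural first move is: if $x$ lies on no line through $\{\infty,a\}$, i.e.\ $\infty\notin\overline{a,x}$, then $g_{a,x}=[\infty,a,x,\infty]$ interchanges $a$ and $x$, hence carries the block $\Delta$ to the distinct block $\Delta_x$, giving $\Delta\cup\Delta_x\subseteq\supp(g_{a,x})$ and so $2k\le 6\lambda+2$. To make this work one must (1) use supersimplicity to handle the exceptional positions of $x$ (where $x$ lies simultaneously on lines through $\{\infty,a\}$, through $\{a,c\}$ and through $\{c,\infty\}$ — which supersimplicity forces to be three \emph{distinct} lines), and (2) upgrade $2k\le 6\lambda+2$ to the sharper $k(\ell-1)\le 6\lambda$ by running the construction over a whole block disjoint from $\supp(g)$: each of its points yields a block-swap of support at least $2k$, and counting how many blocks can be covered by supports of size $6\lambda+2$ against the transitivity of $G$ on the $\ell$ blocks pins down $\ell$, and hence $n$. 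The underlying difficulty is that move sequences need not be automorphisms of $\De$, so the bad pair $\{a,c\}$ cannot simply be transported around $\Omega'$ by $G$; the block structure must be fed in one generator at a time, with careful bookkeeping of the few points lying on several of the relevant lines.
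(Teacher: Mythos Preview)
This lemma is not proved in the present paper: it is quoted verbatim as ``the following result from \cite{Puzz}'' and used as a black box in the proof of Theorem~E~(2). So there is no proof here to compare against; any assessment has to be of your argument on its own merits.

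Your opening is fine and matches what one would expect: if (i) fails via $a,c\in\Delta$ with $\infty\notin\overline{a,c}$, then $g=[\infty,a,c,\infty]$ swaps $a,c$, hence stabilises $\Delta$, and $|\supp(g)\setminus\Delta|\le 6\lambda$, producing a fixed point $x$ outside $\Delta$ once $k(\ell-1)>6\lambda$. The problem is the step you yourself flag as the crux. From a single block-swap $g_{a,x}$ (assuming you can arrange $\infty\notin\overline{a,x}$, which you have not) you only obtain $2k\le 6\lambda+2$, and your proposed ``upgrade'' to $k(\ell-1)\le 6\lambda$ is not an argument but a wish-list: you speak of ``running the construction over a whole block disjoint from $\supp(g)$'' and ``counting how many blocks can be covered'', but you neither establish that such a disjoint block exists, nor specify any counting that forces all $\ell-1$ non-$\Delta$ blocks into a single support. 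Transitivity of $G$ on blocks does not help here for exactly the reason you identify at the end: conjugating the bad pair $\{a,c\}$ by elements of $G$ need not preserve the condition $\infty\notin\overline{\cdot,\cdot}$, so you cannot manufacture a bad pair in every block.

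In short, the proposal correctly isolates the right generator $g$ and the right first inequality, but stops at $2k\le 6\lambda+2$; the passage from there to $k(\ell-1)\le 6\lambda$ is the entire content of the lemma and is left as a heuristic. If you want to complete this, a more promising line is to analyse how $g$ itself permutes the $\ell$ blocks: every block not stabilised by $g$ lies wholly inside $\supp(g)$, so if you can force $g$ to move \emph{every} block other than $\Delta$ you are done in one stroke. That, rather than chasing individual fixed points $x$, is where the effort should go.
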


\begin{proof}[Proof of Theorem E (2)]
Suppose that $n>9\lambda+1$. We assume (for a contradiction) that $G$ preserves a system of imprimitivity with $\ell$ blocks each of size $k$. Suppose first that case (i) of Lemma \ref{l: prim} holds and let $\Delta:=\{c_1,\ldots,c_k\}$ be a block of imprimitivity. Thus there exist points $d_2,\ldots,d_k \in \Omega$ so that $\{\infty,c_1,c_i,d_i\}$ is a line for each $2 \leq i \leq k$. Define: $$\Gamma:=\overline{\infty,c_1} \cup \overline{c_1,d_2} \cup \overline{d_2,\infty},$$ and observe that since $\Delta \subseteq \overline{\infty,c_1}$, $\Delta \subset \Gamma$. Also note that $$|\Gamma| \leq 3(2\lambda+2)-12+4=6\lambda-2 < n.$$ Hence we may choose $e \in \Omega \backslash \Gamma$ and define $g:=[\infty,c_1,e,\infty]$. Now, $\infty \notin \overline{c_1,e}$ so that $c_1^g=e$ and since $e \notin \Delta$, we must have $\Delta^g \cap \Delta = \emptyset.$ Furthermore, since $d_2 \notin \overline{c_1,e} \cup \overline{e,\infty}$, necessarily, $\Delta^g=\{e,d_2,\ldots,d_k\}.$ In particular (by Lemma \ref{l: prim}(ii)) $\infty \in \overline{e,d_2}$. But $e \notin \overline{d_2,\infty}$, a contradiction.

We conclude therefore that case (ii) of Lemma \ref{l: prim} holds, which is possible only if $\ell=2$. This implies that $G$ contains an element of support  of size $2k=n-1$ in its generating set, contradicting the fact that $G$ is generated by elements with support of size at most $6\lambda+2$ (\cite[Lemma 7.3]{Puzz}). This completes the proof.
\end{proof}

\subsection{The primitive case}

In this section we suppose $n$ is large enough so that, by Theorem~E (2), $G$ is primitive and we prove the remaining parts of Theorem~E. We recall that, for a primitive permutation group $H$ we write $\mu(H)$ for the minimal size of the support of a non-trivial element of $G$. Our strategy will be to exploit the fact that hole stabilizers naturally contain elements of small support.

We will make use of the following result of Babai \cite{babai}, which is a weaker version of Theorem~\ref{t: ls} that has the advantage of not depending on the Classification of Finite Simple Groups.

\begin{Thm}\label{t: babai}
Let $d$ be a positive integer and let $H$ be a primitive subgroup of $\Sym(d)$ that does not contain $\Alt(d)$. 
Then we have that $\mu(H)\geq \frac{1}{2} (\sqrt{d}-1).$
\end{Thm}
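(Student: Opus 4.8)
Note first that the inequality follows at once from Theorem~\ref{t: ls}, which yields the stronger bound $\mu(H)\geq 2(\sqrt{d}-1)$; Theorem~\ref{t: babai} is recorded separately only because Babai's proof avoids the Classification, and the plan below is to sketch that CFSG-free proof rather than the trivial deduction from Theorem~\ref{t: ls}. Since the result is used as a black box in what follows, the honest thing in the paper is simply to cite \cite{babai}; the outline is included here for completeness.

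Write $m:=\mu(H)$ and fix a non-identity $g\in H$ with $|\supp(g)|=m$, putting $A:=\supp(g)$ and $F:=\Fix(g)$. The claim is equivalent to $d\leq (2m+1)^2$, so it suffices to bound $d$ above by a quadratic function of $m$, and we may assume $F\neq\emptyset$, since otherwise $m=d$ and there is nothing to prove. Small values of $m$ are disposed of at the outset: by Jordan's theorem a primitive group containing a transposition is $\Sym(d)$ and one containing a $3$-cycle contains $\Alt(d)$, and more generally the classical Jordan--Manning circle of results already bounds $d$ by \emph{some} function of $m$; Babai's theorem asserts that this function can be taken quadratic, with a clean constant.

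The core is a combinatorial estimate on orbital graphs. Pick a non-trivial orbital of $H$ and let $\Gamma$ be the associated graph (symmetrised, or treated as a digraph, as necessary); then $H$ acts vertex-transitively on $\Gamma$ and $\Gamma$ is connected by primitivity, and if $H$ is not $2$-transitive one may take $\Gamma$ non-complete. The key local fact is that $g$ fixes no point of $A$, so every $\langle g\rangle$-orbit inside $A$ has length at least $2$; hence for $v\in F$ the neighbourhood $\Gamma(v)$ is $g$-invariant and so meets $A$ in $0$ or at least $2$ points. Counting edges between $A$ and $F$ then bounds the set of vertices of $F$ adjacent to $A$ by $km/2$, where $k$ is the valency of $\Gamma$, and iterating this along geodesics out of $A$ controls $|\{x:\dist(x,A)=i\}|$ for each $i\geq 1$. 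A companion argument, again invoking primitivity (to prevent $A$ from being too dispersed across $\Gamma$), bounds both $k$ and the diameter of $\Gamma$ in terms of $m$; feeding these into $d\leq |A|+\sum_{i\geq 1}|\{x:\dist(x,A)=i\}|$ produces $d=O(m^2)$, and careful bookkeeping of the constants yields exactly $d\leq (2m+1)^2$. When $H$ is $2$-transitive the only non-trivial orbital graph is complete and this degenerates; one then falls back on the classical fact (Bochert) that a $2$-transitive group not containing $\Alt(d)$ has minimal degree bounded below linearly in $d$, so the required inequality holds with room to spare.

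The genuine obstacle is the uniprimitive estimate just described: converting ``$g$ has small support'' into honest upper bounds on the valency and diameter of a connected non-complete vertex-transitive graph, and tracking the constants closely enough to land on $\frac{1}{2}(\sqrt{d}-1)$ rather than a weaker multiple of $\sqrt{d}$. This is exactly Babai's analysis of orbital graphs in \cite{babai}, which I would not reproduce here.
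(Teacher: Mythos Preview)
The paper does not prove Theorem~\ref{t: babai} at all: it is stated as a quotation of Babai's result \cite{babai} and used as a black box in the proof of Theorem~E~(3). You identify this correctly in your opening paragraph, and that observation alone matches the paper's treatment exactly.

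Your additional sketch is therefore strictly more than the paper offers. A brief caution on its accuracy: the outline you give (orbital graphs, bounding the $F$--$A$ edges, iterating along geodesics, with Bochert handling the $2$-transitive case) is a plausible heuristic for a minimal-degree bound, but it is not really how Babai's 1981 argument runs. Babai works with coherent configurations and bounds the \emph{order} of a uniprimitive group via a combinatorial lemma on distinguishing points by their relations to a small base; the minimal-degree statement is then extracted from the order bound. The ingredients you name (small support, orbital structure, primitivity forcing connectedness) are all present in spirit, but the mechanism is not ``bound valency and diameter, then sum shells around $A$''. If you want to include a sketch rather than a bare citation, it would be safer either to follow Babai's actual argument or to present your version explicitly as an alternative heuristic rather than as a summary of \cite{babai}. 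For the purposes of this paper, the citation alone suffices and is exactly what the authors do.
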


The following result is part of Lemma 3.1 in \cite{Puzz}.

\begin{Lem}\label{l: kkk}
 $G=\langle [\infty, a, b, \infty] \mid a,b\in\Omega\backslash\infty\rangle$. Furthermore the elements $[\infty, a, b, \infty]$ have support of size at most $6\lambda+2$.
\end{Lem}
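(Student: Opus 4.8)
The plan is to prove the two assertions of the lemma in turn: that $G=\pi_\infty(\De)$ is generated by the closed move sequences of the form $[\infty,a,b,\infty]$, and that each such element has support of size at most $6\lambda+2$. For the generation statement, I would take an arbitrary element $g=[\infty,a_1,a_2,\dots,a_{k-1},\infty]$ of $\pi_\infty(\De)$ and whittle it down to a product of length-two closed sequences. First, whenever some interior entry $a_i$ equals $\infty$ I would split the sequence at that point, writing $g=[\infty,a_1,\dots,a_{i-1},\infty]\cdot[\infty,a_{i+1},\dots,a_{k-1},\infty]$; each factor is again a closed move sequence at $\infty$, so after finitely many splits we may assume $a_i\neq\infty$ for every $i$. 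If $k-1\leq 1$ then $g$ is the identity (note $[\infty,a_1,\infty]=[\infty,a_1]^2=1$, since elementary moves are involutions), so assume $k-1\geq 2$. Then I would verify the telescoping identity
\[
[\infty,a_1,\dots,a_{k-1},\infty]=[\infty,a_1,a_2,\infty]\,[\infty,a_2,a_3,\infty]\cdots[\infty,a_{k-2},a_{k-1},\infty],
\]
which follows by expanding each factor into elementary moves and cancelling $[a_i,\infty][\infty,a_i]=[\infty,a_i]^2=1$ at each internal junction. The factors on the right lie in the proposed generating set (with distinct middle entries, since consecutive entries of a move sequence are distinct), and conversely each $[\infty,a,b,\infty]$ is patently a closed move sequence at $\infty$ and hence lies in $G$; this gives the first assertion.

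For the support bound I would write $g=[\infty,a,b,\infty]=[\infty,a][a,b][b,\infty]$. Since the elementary move $[x,y]$ has support exactly $\overline{x,y}$, a set of size $2\lambda+2$ (by \eqref{e:elem move}), we get $\supp(g)\subseteq\overline{\infty,a}\cup\overline{a,b}\cup\overline{b,\infty}$. Applying the three moves in succession sends $\infty\mapsto a\mapsto b\mapsto\infty$, so $g$ fixes $\infty$ and $\supp(g)\subseteq(\overline{\infty,a}\cup\overline{a,b}\cup\overline{b,\infty})\setminus\{\infty\}$. To extract the precise constant I would bound this union using the overlaps: $\overline{a,b}$ meets $\overline{\infty,a}$ in (at least) the point $a$, while $\overline{b,\infty}$ meets $\overline{\infty,a}\cup\overline{a,b}$ in (at least) the two distinct points $b$ and $\infty$, so
\[
\bigl|\overline{\infty,a}\cup\overline{a,b}\cup\overline{b,\infty}\bigr|\leq(2\lambda+2)+(2\lambda+1)+2\lambda=6\lambda+3,
\]
and discarding $\infty$ leaves at most $6\lambda+2$ points in $\supp(g)$.

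The telescoping step and the involution cancellations are routine; the only place needing a little care is squeezing out the sharp bound $6\lambda+2$ rather than the crude $3(2\lambda+2)=6\lambda+6$, which uses both that $\infty$ is a fixed point of $g$ and the precise incidences among $\overline{\infty,a}$, $\overline{a,b}$ and $\overline{b,\infty}$. One should also dispatch the degenerate cases in the generating set (an entry equal to $\infty$, or $a=b$), where the nominal generator is simply trivial and contributes nothing.
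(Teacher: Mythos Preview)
Your proof is correct. The paper itself does not give a proof of this lemma at all: it simply records that the result is part of \cite[Lemma 3.1]{Puzz} and moves on. Your telescoping identity
\[
[\infty,a_1,\dots,a_{k-1},\infty]=\prod_{i=1}^{k-2}[\infty,a_i,a_{i+1},\infty]
\]
is exactly the standard argument (and indeed is how the cited result is proved), and your support computation is accurate: the union of the three sets $\overline{\infty,a}$, $\overline{a,b}$, $\overline{b,\infty}$ has size at most $6\lambda+3$ once the forced overlaps at $a$, $b$, $\infty$ are accounted for, and removing the fixed point $\infty$ gives $6\lambda+2$. So there is nothing to compare beyond noting that you have supplied the details the paper delegates to a citation.
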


\begin{proof}[Proof of Theorem~E]
We have already proved parts (1) and (2): thus we must prove parts (3) and (4).

Suppose that $n>144\lambda^2+120\lambda+26$. Then Theorem~E (2) implies that $G$ is primitive. Suppose that $G$ does not contain $\Alt(n-1)$. Then Theorem \ref{t: babai} and Lemma~\ref{l: kkk} imply that
\[
 6\lambda+2 \geq \frac{1}{2}(\sqrt{n-1}-1).
\]
Rearranging the inequality, one obtains a contradiction as required.

We are left with part (4). If $\lambda\leq 2$, then the result is a consequence of \cite[Theorem C]{Puzz}. Suppose, then, that $\lambda\geq 3$ and that $n>9\lambda^2-12\lambda+5$. Then, in particular, $n>9\lambda+1$ and $G$ is primitive. Suppose that $G$ does not contain $\Alt(n-1)$.

Suppose, first, that $G$ contains a non-trivial element of the form $g=[\infty, a, b, \infty]$ where $\infty\in\overline{a,b}$. Then $g$ has support of size at most $6\lambda-6$ and, combining this fact with the inequality $\mu(H)\geq 2 (\sqrt{d}-1)$ given by Theorem~\ref{t: ls}, we obtain a contradiction and the result is proved. Suppose, on the other hand, that $G$ does contain a non-trivial element of the form $g=[\infty, a, b, \infty]$ where $\infty\in\overline{a,b}$. Then Theorem D gives the result.
\end{proof}

\subsection{The case \texorpdfstring{$\lambda=3$}{Lambda=3}}\label{s: l3}

In previous work with A. Nixon \cite{Puzz} Conway groupoids associated with $2-(n,k,\lambda)$ designs were completely classified for $\lambda\leq 2$. In this subsection we discuss the possibility of extending this classification to deal with the case $\lambda=3$.

We assume throughout this section that $G$ is the hole stabilizer $\pi_\infty(\De)$ of a $2-(n,4,3)$ design. We state two lemmas dealing with the different possibilities for $G$.

\begin{Lem}\label{l: lambda 3 a1}
Suppose that $G$ is primitive. Then either $G \cong \Alt(n-1)$ or one of the following holds:
\begin{itemize}
 \item $n=12$ and $G \in \{M_{11}, \rm{PSL}_2(11), C_{11} \rtimes C_5, C_{11}\}$;
 \item $n=13$ and $G \in \{M_{12},M_{11},\rm{PSL}_2(11)\}$;
 \item $n=16$ and $G \in \{\rm{SL}_4(2),\Sym(6), \Alt(7),\Alt(6)\}$;
 \item $n=17$ and $G$ is isomorphic to one of 19 primitive subgroups of $2^4.\rm{SL}_4(2)$;
 \item $n=28$ and $G=\PSp_4(3) \rtimes C_2$;
 \item $n=29$ and $G \in \{\Sp_6(2), \Sym(8)\}$.
 \end{itemize}
\end{Lem}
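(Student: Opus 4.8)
The plan is to bound $n$ from above via Theorem~E, reduce to a finite search, and then combine the classification of primitive permutation groups of small degree with the structural fact that $G$ is generated by elements of small, cycle-type-restricted support. For the reduction: since $\lambda=3$ is odd, every generator $[\infty,a,b,\infty]$ of $G$ (see Lemma~\ref{l: kkk}) is a product of $3(\lambda+1)=12$ transpositions, so $G\leq\Alt(n-1)$; hence if $\L_\infty(\De)$ contains $\Alt(n)$ then, by the remark following Theorem~E, $\L_\infty(\De)=\Alt(n)$, and comparing $|\L_\infty(\De)|=n\,|G|$ (\cite[Lemma 3.1]{Puzz}) with $|\Alt(n)|$ forces $G=\Alt(n-1)$, the first alternative. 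By Theorem~E(4), applied with $\lambda=3\neq 1$, this happens whenever $n>9\cdot 3^2-12\cdot 3+5=50$, so we may assume $n\leq 50$. A $2\text{-}(n,4,3)$ design has $n(n-1)/4$ blocks, forcing $n\equiv 0$ or $1\pmod 4$, and $n=5$ is impossible (the only $2\text{-}(5,4,3)$ design consists of all five $4$-subsets and so is not supersimple); this leaves $n$ in the explicit list $\{8,9,12,13,16,17,20,21,24,25,28,29,32,33,36,37,40,41,44,45,48,49\}$.

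Now fix such an $n$ and suppose $G\neq\Alt(n-1)$. By Lemma~\ref{l: kkk}, $G$ is a primitive subgroup of $\Alt(n-1)$ generated by elements of support at most $6\lambda+2=20$; moreover each generator is a product of three involutions, each of which is itself a product of at most $\lambda+1=4$ disjoint transpositions, and hence of support at most $8$. The plan is to run, for each admissible $n$, through the library of primitive groups of degree $n-1$ in GAP~\cite{GAP}, retain those lying inside $\Alt(n-1)$ but not equal to it, and test each survivor for whether it can be generated by permutations of this restricted cycle type. This finer test eliminates the numerous further primitive groups of these degrees (the various $\mathrm{PSL}$, $\mathrm{PGL}$ and alternating-group actions) and leaves exactly the six values of $n$ and the corresponding groups in the statement; for the smallest $n$ it is more efficient to enumerate all supersimple $2\text{-}(n,4,3)$ designs directly and read off their hole stabilizers.

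The main difficulty is that at the borderline degrees $n-1\in\{12,15,16,27,28\}$ the coarse condition ``primitive, inside $\Alt(n-1)$, generated by support-$\leq 20$ elements'' is satisfied by many groups --- for instance $\Alt(8)$ in its action on $2$-subsets when $n=29$ --- so the real content lies in showing that the cycle-type restriction on the generators genuinely removes these, and in carrying this out conclusively, in particular certifying the non-existence of an admissible generating set (or of a suitable supersimple design) for every excluded degree. This needs care because the set of permutations realised as $[\infty,a,b,\infty]$ is constrained but not completely explicit, so the GAP search must be set up to over-approximate it safely and still return the exact list.
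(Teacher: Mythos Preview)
Your route is genuinely different from the paper's and, as you yourself flag in the final paragraph, it is not complete: you describe a GAP search based on cycle-type restrictions on the generators but do not carry it out, and you give no argument that the over-approximation ``generated by products of three involutions each of support $\leq 8$, total support $\leq 20$'' really cuts the list of primitive groups of degree $\leq 49$ down to exactly the groups in the statement. For instance, at degree $31$ the group $\mathrm{PSL}_5(2)$ contains transvections of support $16$, and one would have to rule it out; similar checks are needed at many other degrees. Asserting that ``this finer test eliminates the numerous further primitive groups'' and then immediately conceding that ``this needs care'' is not a proof.

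The paper's argument avoids this difficulty entirely by using Theorem~D (equivalently Theorem~\ref{t: d}) directly rather than through its corollary Theorem~E(4). One splits on whether $G$ contains a non-trivial element $[\infty,a,b,\infty]$ with $\infty\in\overline{a,b}$. If not, Theorem~D gives $G\cong\Alt(n-1)$ immediately, with no bound on $n$ needed. If so, the crucial observation (already used in the proof of Theorem~E(4)) is that such an element has support at most $6\lambda-6=12$, not $6\lambda+2=20$, because the condition $\infty\in\overline{a,b}$ forces substantial overlap among $\overline{\infty,a}$, $\overline{a,b}$, $\overline{b,\infty}$. Primitive groups containing a non-trivial element of support $\leq 12$ form a short classical list (Manning, pre-CFSG; verifiable in GAP); intersecting with $\Alt(n-1)$ and handling $n\leq 9$ by direct enumeration of the unique supersimple designs yields the statement at once. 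Your approach, by passing through Theorem~E(4), discards the sharper support bound $12$ and then tries to recover the lost information via an ad hoc and unexecuted computation.
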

\begin{proof}
Suppose, first, that $G$ does not contain a non-trivial element of the form $g=[\infty, a, b, \infty]$ where $\infty\in\overline{a,b}$. Then Theorem~\ref{t: d} implies that $G\cong\Alt(n-1)$ as required.

Suppose, on the other hand, that $G$ contains a non-trivial element of the form $g=[\infty, a, b, \infty]$ where $\infty\in\overline{a,b}$. Then $G$ contains an element with support of size at most $12$; all primitive groups containing an element with support of size at most $15$ have been known explicitly since long before CFSG (see, especially, \cite{manning, manning1}; we refer to the library in GAP\cite{GAP} for verification).
  
Now, of the list provided by GAP we are able to exclude all of these groups that are not subgroups of $\Alt(n-1)$ and, for $n>9$, the resulting groups are those listed in the lemma. The remaining values -- when $n=8$ or $9$ -- can be excluded directly since there is only one supersimple design in each case, and neither yield a primitive hole stabilizer.
\end{proof}

Note that Lemma~\ref{l: lambda 3 a1} lists possible isomorphism types for $\pi_\infty(\De)$. We do not know whether designs exist yielding hole-stabilizers of these forms.

\begin{Lem}\label{l: lambda 3 a2}
Suppose that $G$ is intransitive. Then $n=8$ and $G$ is trivial, or else $n=12$ or $13$.
Suppose that $G$ is transitive and imprimitive. Then $n=9$ and $G\cong \Alt(4)\wr C_2$, or else $n=13, 17, 21, 25$ or $28$.
\end{Lem}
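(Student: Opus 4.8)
The plan is to combine the numerical bounds of Theorem~E with the elementary arithmetic of a $2-(n,4,3)$ design to cut down to a short list of values of $n$, and then to dispose of the surviving small cases by structural results from Section~\ref{s:back} together with the classification of the relevant supersimple designs (verified with GAP \cite{GAP}, as elsewhere in this section).

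First I would assemble the restrictions on $n$. Since $\lambda=3$, Theorem~E (1) shows that $G$ intransitive forces $n\leq 4\lambda+1=13$, and Theorem~E (2) shows that $G$ transitive and imprimitive forces $n\leq 9\lambda+1=28$. In the other direction, supersimplicity gives $|\overline{a,b}|=2\lambda+2=8$ for distinct $a,b\in\Omega$, so $n\geq 8$; and since the number of blocks of a $2-(n,4,3)$ design equals $n(n-1)/4$, which must be an integer, we need $n\equiv 0$ or $1\pmod 4$. Hence $n\in\{8,9,12,13\}$ in the intransitive case, and $n\in\{8,9,12,13,16,17,20,21,24,25,28\}$ in the transitive imprimitive case; in the latter case transitivity of $G$ on $\Omega\backslash\{\infty\}$ together with imprimitivity forces $n-1$ to be composite, which eliminates $n\in\{8,12,20,24\}$ (there $n-1$ is prime, so a transitive group is automatically primitive) and leaves $n\in\{9,13,16,17,21,25,28\}$.

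Next I would clear out $n=8$. A supersimple $2-(8,4,3)$ design has $14$ blocks, and comparing the $\binom{8}{3}=56$ triples of $\Omega$ with the $14\cdot\binom{4}{3}=56$ incidences of triples in blocks shows that every triple lies in exactly one block; thus $\De$ is the unique $3-(8,4,1)$ design, the Boolean design of order $2^3$, and $G$ is trivial by \cite[Theorem B]{Puzz}. Since $n-1=7$ is prime, $G$ is never transitive imprimitive for $n=8$, so this settles $n=8$ in both assertions. For the remaining values, $n=12,13$ (intransitive) and $n=13,17,21,25,28$ (imprimitive) are simply recorded as admissible; and for $n=9$ one uses the fact that there is a unique supersimple $2-(9,4,3)$ design and checks with GAP that its hole stabilizer is transitive and isomorphic to $\Alt(4)\wr C_2$, which simultaneously removes $n=9$ from the intransitive case and identifies $G$ in the transitive imprimitive case. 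This leaves only the exclusion of $n=16$ from the transitive imprimitive list.

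The main obstacle is precisely this last point. For $n=16$, $G$ is transitive and imprimitive on the $15$ points of $\Omega\backslash\{\infty\}$, so a block of imprimitivity has size $3$ or $5$, and Lemma~\ref{l: prim} leaves the two possibilities that either every pair inside such a block is collinear with $\infty$, or $n\leq\frac{6\ell}{\ell-1}\lambda+1$; for the relevant numbers of blocks $\ell\in\{3,5\}$, neither alternative alone produces a contradiction. The argument therefore has to exploit the finer interaction between a block of imprimitivity and the derived design of lines through $\infty$ — in the first alternative the internal pairs of a block must all be covered by lines through $\infty$, which severely constrains how the $5$ or $3$ blocks of imprimitivity can sit among the $15$ such lines — or else to invoke a direct enumeration of the supersimple $2-(16,4,3)$ designs and to verify that none has a transitive imprimitive hole stabilizer. (Note that $n=16$ does arise with a primitive hole stabilizer, by Lemma~\ref{l: lambda 3 a1}, so the obstruction is genuinely about imprimitivity and not about non-existence of the design.) A secondary difficulty, already visible above, is that the precise identification $G\cong\Alt(4)\wr C_2$ when $n=9$ relies on the full list of supersimple $2-(9,4,3)$ designs rather than on the coarse numerical invariants.
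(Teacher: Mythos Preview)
Your approach is essentially identical to the paper's: the paper first dispatches $n=8$ and $n=9$ by invoking the uniqueness of the relevant supersimple designs (citing the Handbook of Combinatorial Designs \cite{Handbook} rather than your direct triple-count for $n=8$), identifies the hole stabilizers in those two cases, and then for $n>9$ applies Theorem~E~(1) in the intransitive case and Theorem~E~(2) in the imprimitive case, together with the congruence $n\equiv 0,1\pmod 4$ and the observation that imprimitivity forces $n-1$ to be composite.

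You have, however, been more careful than the paper at one point. The paper's proof states that the congruence and compositeness conditions reduce the imprimitive list for $n>9$ to $\{13,17,21,25,28\}$, and gives no further argument. But as you correctly observe, $n=16$ survives both conditions: $16\equiv 0\pmod 4$ and $n-1=15=3\cdot 5$ is composite. So the paper's written proof does not exclude $n=16$, and your identification of this as ``the main obstacle'' is well-founded --- the paper simply does not address it. Either $16$ should appear in the statement of the lemma, or an additional argument (of the sort you sketch, via Lemma~\ref{l: prim} or direct enumeration) is needed; the paper supplies neither.
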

\begin{proof}
Note first that, using the Handbook of Combinatorial Designs \cite{Handbook}), it is easy to confirm that the two case, $n=8$ and $n=9$, each yield exactly one supersimple $2-(n,4,3)$ design. When $n=8$ this design is the Boolean one and the associated hole stabilizer is trivial; when $n=9$ the associated hole stabilizer is $\Alt(4)\wr C_2$, a transitive, imprimitive group, as required. Assume now that $n>9$.

If $G$ is intransitive, then the result follows from Theorem~E (1).  Now suppose that $G$ is transitive and imprimitive. Then Theorem~E (2) implies that $n\leq28$. To complete the proof we use the fact that if a $2-(n,4,3)$ design exists, then $n\equiv 0,1\pmod 4$ and, furthermore, that, since $G$ is imprimitive, $n-1$ is not a prime.
\end{proof}

Lemmas~\ref{l: lambda 3 a1} and \ref{l: lambda 3 a2} imply that the job of classifying Conway groupoids associated with $2-(n,4,3)$ designs is reduced to the situation where $12\leq n\leq 29$.

\end{document}